\newcommand{\COMMENT}[1]{}
\newcommand{\LONGCOMMENT}[1]{}
\newcolumntype{L}{>{$}l<{$}} % math-mode version of "l" column type
\newcolumntype{C}{>{$}c<{$}} % math-mode version of "c" column type
\newcolumntype{R}{>{$}r<{$}} % math-mode version of "r" column type
\let\orgdescriptionlabel\descriptionlabel
\renewcommand*{\descriptionlabel}[1]{%
  \let\orglabel\label
  \let\label\@gobble
  \phantomsection
  \edef\@currentlabel{#1\unskip}%
  \let\label\orglabel
  \orgdescriptionlabel{#1}%
}
\crefname{section}{\S}{\S\S}
\newtheorem{theorem}{Theorem}[section]
\newtheorem{maintheorem}{Theorem}\crefname{maintheorem}{Theorem}{Theorems}
\newtheorem{lem}[theorem]{Lemma}\crefname{lem}{Lemma}{Lemmas}
\newtheorem{cor}[theorem]{Corollary}\crefname{cor}{Corollary}{Corollaries}
\newtheorem{prop}[theorem]{Proposition}\crefname{prop}{Proposition}{Propositions}
\newtheorem*{theorem*}{Theorem}
\newtheorem*{question*}{Question}
\newtheorem*{observation*}{Observation}
\theoremstyle{definition}
\newtheorem{defn}[theorem]{Definition}
\theoremstyle{remark}
\newtheorem*{rem}{Remark}
\newtheorem{remark}[theorem]{Remark}
\newtheorem{example}[theorem]{Example}
\author{Tobias Hemmert and Marcus Zibrowius}
\title{The Witt rings of many flag varieties\\are exterior algebras}
\begin{document}
\maketitle

\begin{abstract}
  The Witt ring of a complex flag variety describes the interesting -- i.e.\ torsion -- part of its topological KO-theory.  We show that for a large class of flag varieties, these Witt rings are exterior algebras, and that the degrees of the generators can be determined by Dynkin diagram combinatorics.   Besides full flag varieties, projective spaces, and other varieties whose Witt rings were previously known, this class contains many flag varieties of exceptional types.  Complete results are obtained for flag varieties of types \(G_2\) and \(F_4\).  The results also extend to flag varieties over other algebraically closed fields.
\end{abstract}

\newcommand{\Witt}{\mathrm{W}}
\newcommand{\K}{\mathrm{K}}
\newcommand{\KO}{\mathrm{KO}}
\newcommand{\R}{\mathrm{R}}
\newcommand{\RO}{\mathrm{RO}}
\newcommand{\RSp}{\mathrm{RSp}}
\newcommand{\reducedR}{\smash{\widetilde{\R}}}
\newcommand{\reducedRO}{\smash{\widetilde{\RO}}}
\newcommand{\reducedRSp}{\smash{\widetilde{\RSp}}}
\newcommand{\h}{\mathrm{h}}%Tate cohomology

\newcommand{\ZZ}{\mathbb{Z}}
\newcommand{\ZZII}{{\ZZ/2}}
\newcommand{\NN}{\mathbb{N}}
\newcommand{\RR}{\mathbb{R}}
\newcommand{\CC}{\mathbb{C}}
\newcommand{\HH}{\mathbb{H}}
\newcommand{\rank}{\mathrm{rank}}
\newcommand{\type}{\mathrm{type}}

\newcommand{\ideal}{\mathfrak}

\newcommand{\cardinality}[1]{\left|#1\right|}
\newcommand{\Mat}{\mathrm{Mat}}
\renewcommand{\vec}[1]{\mathbf{#1}}
\renewcommand{\vector}[1]{\begin{pmatrix}#1\end{pmatrix}}
\renewcommand{\bar}[1]{\overline{#1}}
\newcommand{\FF}{\mathbb{F}}
\newcommand{\solution}{\mathcal L}
\newcommand{\Hom}{\mathrm{Hom}}
\newcommand{\im}{\mathrm{im}}
\newcommand{\contHom}{\mathrm{Hom}_{\mathrm{cont}}} %continuous homomorphisms

\newcommand{\id}{\mathrm{id}}

\newcommand{\U}{\mathrm{U}}
\newcommand{\SU}{\mathrm{SU}}
\renewcommand{\O}{\mathrm{O}}
\newcommand{\SO}{\mathrm{SO}}
\newcommand{\Sp}{\mathrm{Sp}}

\newcommand{\alphaU}{\alpha_{\U}}
\newcommand{\alphaO}{\alpha_{\O}}
\newcommand{\alphaSp}{\alpha_{\Sp}}

\newcommand{\ibidem}[1]{[\textit{ibid.}\ifstrempty{#1}{}{, #1}]}

%%%%%%%%%%%%%%%%%%%%%%%%%%%%%%%%%%%%%%%%%%%%%%%%%%%%%%%%%%%%%%%%%%%%%%%%%%%%%%%%%%%%%%%%%%%%%%%%%%%%
  \newcommand{\explanation}[2]{}
%%%%%%%%%%%%%%%%%%%%%%%%%%%%%%%%%%%%%%%%%%%%%%%%%%%%%%%%%%%%%%%%%%%%%%%%%%%%%%%%%%%%%%%%%%%%%%%%%%%%
\newcommand{\levisubgroup}[1]{\ifstrempty{#1}{L}{L_{#1}}}
\explanation{\subgroup{}}{some centralizer of a torus in \(G\)}
\explanation{\subgroup{H}}{the centralizer of a torus in \(G\) corresponding to the subset \(H\subset\simpleroots{}\)}
\newcommand{\idcomponent}[1]{{#1}^0}
\explanation{\idcomponent{G}}{connected component of \(G\) containing the identity}
\newcommand{\roots}[1]{\mathcal R_{#1}}
\newcommand{\coroots}[1]{\mathcal R^\vee_{#1}}
\newcommand{\simpleroots}[1]{\ifstrempty{#1}{\Sigma}{#1}}
\explanation{\simpleroots}{fundamental system = set of simple roots, or the corresponding Dynkin diagram}
\newcommand{\folded}[1]{#1^\sigma}
\explanation{\folded{\simpleroots{}}}{The root system obtained from \(\simpleroots\) by folding along the automorphism \(\dual{}\)}
\newcommand{\Weyl}[1]{\mathcal{W}_{#1}}
\explanation{\Weyl{}}{Weyl group, generated by reflections in the simple roots}
\newcommand{\Weylcosets}[1]{{\Weyl{}^{#1}}}
\explanation{\Weylcosets{H}}{canonical minimal representatives of the cosets \(\Weyl{}/\Weyl{H} = \{ w\Weyl{H}\}\), as discussed in \cite[\S\,5-1]{kane}}
\newcommand{\longest}[1]{{w_o^{#1}}}
\explanation{\longest{H}}{longest element of the Weyl group \(\Weyl{H}\)}
\newcommand{\Weylo}{{\Weyl{}}_o}
\explanation{\Weylo}{subgroup of elements of Weyl group commuting with \(\longest{}\)}
\newcommand{\Weylfolded}{{\Weyl{}}^\sigma}
\explanation{\Weylfolded}{``folded'' subgroup of the Wely group; the subgroup fixed under conjugation with \(\dual{}\)}
\newcommand{\Weylfoldedparabolic}[1]{\Weylfolded_{#1}}
\explanation{\Weylfoldedparabolic{I}}{parabolic subgroup of \(\Weylfolded\) corresponding to \(I\subset \simpleroots{}\)}
\newcommand{\standardinvolution}[1]{{c_{#1}}}
\explanation{\standardinvolution{I}}{standard involution associated with \(I\subset \simpleroots{}\)}
\newcommand{\dual}[1]{{[\mathrm{\simpleroots{#1}}]}}
\explanation{\dual{I}}{involution \(-\longest{I}\)}
\newcommand{\lessdominant}[1]{<_{#1}}
\newcommand{\smallerorbit}[1]{\subset_{#1}}
\explanation{\lessdominant{}}{partial order on the dominant weights of \(G\)}
\explanation{\lessdominant{H}}{partial order on the dominant weights of \(\levisubgroup{H}\)}
\newcommand{\cellZZ}[1]{\mathcal C_{#1}}
\newcommand{\cellRR}[1]{\mathcal C_{\RR,#1}}
\newcommand{\cellclosureZZ}[1]{\overline{\mathcal C}_{#1}}
\newcommand{\cellclosureRR}[1]{\overline{\mathcal C}_{\RR,#1}}
\explanation{\cellRR{I}}{cell corresponding to subset \(I\subset \Sigma\), as defined in \cite{kane}}
\explanation{\cellclosureRR{I}}{closure of \(\cell{I}\)}
\newcommand{\closedWeylchamberZZ}[1]{\overline{\mathcal C}\ifstrempty{#1}{}{(#1)}}%Adams: closed FDWC
\newcommand{\closedWeylchamberRR}[1]{\overline{\mathcal C}_\RR\ifstrempty{#1}{}{(#1)}}
\explanation{\closedWeylchamberRR{H}}{closed fundamental dual Weyl chamber for \(H\)\\Note that \(\closedFDWCHRR \neq\cellclosureRR{\simpleRootsH}\); in general \(\cell{\simpleRootsH}\subset\closedFDWC{}\subset\closedFDWC{H}\)}
\newcommand{\coweightspaceZZ}{{X_*}}
\newcommand{\coweightspaceRR}{{X_*^\RR}}
\newcommand{\weightspaceZZ}{{X^*}}
\newcommand{\weightspaceRR}{{X^*_\RR}}
\newcommand{\pairing}[2]{\langle #1, #2 \rangle}
\newcommand{\innerproduct}[2]{(#1,#2)}
\newcommand{\plusdim}{\ell^+}
\explanation{\plusdim(\tau)}{the dimension of the \(+1\)-Eigenspace of \(\tau\), for any involution \(\tau\in\mathrm{O}(\weightspaceRR, \invariantform{-}{-})\)}
\newcommand{\minusdim}{\ell^-}
\explanation{\minusdim(\tau)}{the dimension of \(-1\)-Eigenspace of \(\tau\), for any involution \(\tau\in\mathrm{O}(\weightspaceRR, \invariantform{-}{-})\)}
\newcommand{\fixrank}[1]{\cardinality{\simpleroots{#1}/\dual{#1}}}

\newcommand{\fixmonoid}{\closedWeylchamberZZ{H}^\dual{H}}
\explanation{\fixmonoid}{submonoid of \(\closedWeylchamberZZ{H}\) fixed by \(\dual{H}\)}
\newcommand{\symsum}[1]{{\mathcal S_{#1}}}
\newcommand{\rsymsum}[1]{{\tilde{\mathcal S}_{#1}}}
\explanation{\symsum{H}(\omega)}{\(:= \sum_{\tau\in \Weyl{H}\omega} e^{2\pi i\tau}\in R(T)^\Weyl{H}\), the elementary symmetric sum of \(\omega\) with respect to \(\Weyl{H}\)}
\explanation{\rsymsum{H}(\omega)}{\(:= \symsum{H}(\omega)-\rank(\symsum{H}(\omega))\), the reduced elementary symmetric sum of \(\omega\) with respect to \(\Weyl{H}\)}

\newcommand{\fundamentalRepresentation}[1]{{\rho_{#1}}}
\newcommand{\rfundamentalRepresentation}[1]{{\tilde\rho_{#1}}}
\newcommand{\irreducibleRepresentation}[1]{\rho_{#1}}
\explanation{\fundamentalRepresentation{\alpha}}{ fundamental representation of \(G\) corresponding to \(\alpha\)}
\explanation{\rfundamentalRepresentation{\alpha}}{\(:= \fundamentalRepresentation{\alpha}-\rank(\fundamentalRepresentation{\alpha})\), reduced fundamental representation}
\explanation{\irreducibleRepresentation{\omega}}{ irreducible representation of \(G\) with highest weight \(\omega\)}

%%% Local Variables:
%%% mode: latex
%%% TeX-master: "main"
%%% End:

The Witt ring of a complex flag variety can be approached in two ways.  Algebraic geometers might define a \emph{complex flag variety} as a projective homogeneous variety under some complex reductive group \(G_\CC\).  As such, it will have the form \(G_\CC/P\) for some parabolic subgroup \(P\).  There is a \(\ZZ/4\)-graded multiplicative cohomology theory \(\Witt^*_{\text{alg}}(-)\) on algebraic varieties, due to Balmer \cite{balmer1}, which extends the usual notion of the Witt ring of quadratic forms over a field.  The Witt rings under investigation in this paper are precisely the rings \(\Witt^*_{\text{alg}}(G_\CC/P)\).

From a topological point of view, we may equivalently define a complex flag variety as a quotient manifold \(G/\levisubgroup{}\) obtained from a compact Lie group \(G\) by dividing out a Levi subgroup \(L\), i.e.\ the centralizer of some torus.  Indeed, as a manifold, \(G_\CC/P\) is diffeomorphic to such a quotient of the maximal compact subgroup \(G\subset G_\CC\).  Moreover, the Witt ring of \(G/L\) can be defined purely topologically as the \(\ZZ/4\)-graded ring given in degree \(i\) by
\[
  \Witt^i_{\text{top}}(G/L) :=  \tfrac{\KO^{2i}(G/L)}{\K(G/L)}
\]
Here, \(\KO^*\) denotes real topological K-theory, \(\K\) denotes complex topological K-theory, and the Witt group \(\Witt^i\) is the quotient of the former by the latter under the realification map.
Fortunately, the two definitions agree \cite{zibrowius:cellular}.  So we are free to drop the subscripts and work with the topological definition in all that follows.  (But see \cref{rem:AG} for the algebraic point of view.)  As explained in \cite[\S\,1.1]{zibrowius:koff}, the Witt ring captures all torsion in \(\KO^*(G/L)\).

Our starting point is the following result of the second author \cite[Theorem~3.3]{zibrowius:koff}:
\begin{maintheorem}\label{thm:old}
  Let $G$ be a simply-connected compact Lie group, and let $T\subset G$ be a maximal torus.
  The Witt ring of the \emph{full flag variety} \(G/T\) is an exterior algebra over \(\ZZII\) on \(\cardinality{\simpleroots{}/\dual{}}\) generators (see below).
  More precisely, the Witt ring has $b_\HH$ generators of degree 1 and $\frac{b_\CC}{2}+b_\RR$ generators of degree 3, where $b_\CC$, $b_\RR$ and $b_\HH$ denote the number of fundamental representations of $G$ of complex, real and quaternionic type, respectively.
\end{maintheorem}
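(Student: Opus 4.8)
The plan is to pass from Witt groups to $\KO$-theory and then to $\K$-theory equipped with complex conjugation. Three structural facts drive this. First, by definition $\Witt^i(G/T)=\KO^{2i}(G/T)/\K(G/T)$, so it suffices to understand the torsion subgroup of $\KO^*(G/T)$ together with its ring structure and the image of the realification map $r\colon\K^*\to\KO^*$. Second, $G/T$ has an algebraic cell decomposition into Schubert cells, all of even real dimension, so $\K^*(G/T)$ is a finitely generated free abelian group concentrated in even degrees (see \cite{zibrowius:cellular}). Third, $\KO$ is the homotopy fixed-point spectrum of complex conjugation $\psi^{-1}$ on $\K$, so there is a conjugation (descent) spectral sequence with $E_2^{s,t}=H^s(\ZZII;\K^t(G/T))$ converging to $\KO^{s+t}(G/T)$ — here $\ZZII$ acts on $\K^t(G/T)\cong\K^0(G/T)$ by $\psi^{-1}$ (fibrewise conjugation of bundles), twisted by $\mathrm{sgn}^{t/2}$. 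In this picture the image of $r$ is exactly the non-Tate part, so $\Witt^*(G/T)$ is built from the Tate cohomology of the $\ZZII$-module $\K^*(G/T)$, read off with the degree shift dictated by $\pi_*\KO$.

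The combinatorial input is the $\ZZII$-module structure of $\K^0(G/T)$. For $G$ simply connected one has $\K^0(G/T)\cong\R(T)\otimes_{\R(G)}\ZZ$ as a ring, and, writing $\R(G)=\ZZ[\fundamentalRepresentation{\alpha}:\alpha\in\simpleroots{}]$, the augmentation ideal is cut out by the regular sequence of reduced fundamental representations $\rfundamentalRepresentation{\alpha}=\fundamentalRepresentation{\alpha}-\rank(\fundamentalRepresentation{\alpha})$; thus $\K^0(G/T)=\R(T)/(\rfundamentalRepresentation{\alpha}:\alpha\in\simpleroots{})\,\R(T)$ and the Koszul complex on this sequence is a $\ZZII$-equivariant free resolution. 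Now $\psi^{-1}$ sends $\fundamentalRepresentation{\alpha}\mapsto\fundamentalRepresentation{\alpha^\ast}$, so it permutes the Koszul variables according to the diagram involution $\dual{}=-\longest{}$: complex nodes are interchanged in pairs, self-dual nodes are fixed. A Künneth-type argument for Tate cohomology — legitimate here since every factor is a permutation or rank-one module — collapses the computation to a tensor product indexed by the orbits $\simpleroots{}/\dual{}$. A complex pair $\{\alpha,\alpha^\ast\}$ contributes the exterior piece attached to the $\ZZ[\ZZII]$-summand it generates; a self-dual node $\alpha$ contributes a rank-one exterior piece whose internal degree is governed by the Frobenius–Schur indicator of $\fundamentalRepresentation{\alpha}$, since it is realifiability (real type) versus symplectifiability (quaternionic type) of $\fundamentalRepresentation{\alpha}$ that determines whether the corresponding class lifts to $\KO$ or only to symplectic K-theory $\KO^{*+4}$ — a shift by the order-four periodicity that is precisely the difference between Witt degree $3$ and Witt degree $1$. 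The upshot is $\cardinality{\simpleroots{}/\dual{}}$ generators, with $b_\HH$ in degree $1$ (one per quaternionic node) and $\tfrac{b_\CC}{2}+b_\RR$ in degree $3$ (one per real node, one per complex pair), over $\ZZII=\Witt^0(\mathrm{pt})$.

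Finally, the Künneth isomorphism is multiplicative, so $\Witt^*(G/T)$ is the tensor product of the rank-two algebras $\ZZII[x]/(x^2)$ on these generators; since all generators sit in odd degrees and $2=0$ in $\Witt^*$, this tensor product is the asserted exterior algebra. One can reorganise this endgame more concretely: exhibit the generators explicitly as $\KO$- or $\mathrm{KSp}$-classes pulled from the $\mathbb{P}^1$-fibrations $G/T\to G/P_\alpha$, show they are nonzero by restriction to type-$A_1$ sub-flag-varieties and to flag varieties of rank-two subsystems, and conclude from the dimension count $\dim_{\ZZII}\Witt^*(G/T)=2^{\cardinality{\simpleroots{}/\dual{}}}$ that there are no further relations; the spectral sequence supplies the matching upper bound.

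The crux is the control of the conjugation spectral sequence for $G/T$: one must show that, apart from the universal differential, it degenerates, that there are no hidden additive extensions, that it is multiplicative, and that the image of $r$ is the non-Tate part as claimed. For an arbitrary even-cellular variety this can fail, so the argument has to exploit the special structure of $\K^0(G/T)$ as an $\R(G)$-module — that it is a complete intersection and that, $\ZZII$-equivariantly, it decomposes into (twisted) trivial, sign, and free summands whose bookkeeping is entirely dictated by the orbit set $\simpleroots{}/\dual{}$ and the types of the $\fundamentalRepresentation{\alpha}$. Everything else — the Koszul/Künneth combinatorics and the translation of the Frobenius–Schur classification into the degrees $1$ and $3$ — is then essentially formal.
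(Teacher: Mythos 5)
Your overall blueprint is the right one, and it is close in spirit to how this result is actually proved (in the cited source and in the machinery of this paper): identify \(\Witt^*(G/T)\) with the Tate cohomology of \(\K^0(G/T)\cong \R T\otimes_{\R G}\ZZ\) with respect to the bundle-duality involution, present \(\K^0(G/T)\) by the regular sequence of reduced fundamental representations, organise everything by the orbit set \(\simpleroots{}/\dual{}\), and read off degrees \(1\) versus \(3\) from quaternionic versus real type via \(\KO\)- and \(\Sp\)-lifts. But the two steps where the actual work lies are asserted rather than proved, and one of them would fail as stated. First, you leave the identification of \(\Witt^*\) with the ``Tate part'' of the descent spectral sequence — degeneration, multiplicativity, absence of extensions — unresolved, and you yourself call it the crux. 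The paper avoids this entirely: since \(G/T\) has an even cell structure, \(\K^1(G/T)=0\) and \(\K^0(G/T)\) is free, so Bousfield's lemma gives directly \(\Witt^*(G/T)\cong \h^*(\K^0(G/T),\circ)\) with no spectral sequence to control; without this (or an equivalent argument) your reduction to Tate cohomology is not established.

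Second, the ``K\"unneth-type argument'' collapsing \(\h^*\) of \(\R T/(\rfundamentalRepresentation{\alpha})\) to a tensor product of pieces indexed by \(\dual{}\)-orbits is not a valid step. The involution on \(\R T\) is \(e^{\lambda}\mapsto e^{-\lambda}\); the quotient is not a tensor product of permutation\slash rank-one \(\ZZII\)-modules indexed by orbits, and in particular the Tate cohomology of a free \(\ZZ[\ZZII]\)-summand vanishes, so ``the exterior piece attached to the \(\ZZ[\ZZII]\)-summand generated by a complex pair'' is not where the corresponding generator comes from. The mechanism is instead iterative: \(\h^+(\R T)=\ZZII\) (only the zero weight is self-dual) and \(\h^-(\R T)=0\), and one kills the self-dual, rank-zero elements \(\rfundamentalRepresentation{\alpha}\) one at a time — each is zero in \(\h^+\) and a non-zero-divisor, so each quotient creates exactly one new odd-degree exterior generator (the paper's \cref{prop:key-Tate-argument}, i.e.\ the lemmas from \cite{hemmert1}); for a complex pair \((\omega_\beta,\omega_{\dual{}\beta})\) one must in addition replace the two non-self-dual generators by a single self-dual element, which is exactly the technical content of \cref{prop:simplified-quotient} (or \cite[Lemma~2.7]{hemmert1}) and is missing from your argument. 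Finally, note that ``generators in odd degree and \(2=0\)'' does not by itself force squares of odd elements to vanish in a \(\ZZ/4\)-graded ring; the paper needs \(\K^1(G/T)=0\) again for this (\cref{prop:odd-degre-squares-are-zero}). Your degree bookkeeping via real\slash quaternionic type and the \(\KO^4\)-shift is the right idea and matches the paper's \cref{lem:degrees}, but it too needs the explicit \(\alphaO\oplus\alphaSp\) argument rather than an appeal to the Frobenius--Schur indicator alone.
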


We need to explain the notation \(\cardinality{\simpleroots{}/\dual{}}\) for the number of generators.  Associated with the pair \((G,T)\), we have a root system and a Dynkin diagram \(\simpleroots{}\) whose nodes correspond to a set of simple roots.  Moreover, there is a canonical involution \(\dual{}\) on \(\simpleroots{}\), given by the action of the negative of the longest element of the Weyl group~\(\Weyl{}\).
We denote by the \(\simpleroots{}/\dual{}\) the set of orbits of \(\dual{}\), so \(\cardinality{\simpleroots{}/\dual{}}\) is simply the number of \(\dual{}\)-orbits.  The non-trivial orbits correspond to pairs of roots whose associated fundamental representations are mutually dual representations of complex type.  So \(\cardinality{\simpleroots{}/\dual{}} = \tfrac{b_\CC}{2}+b_\RR + b_\HH\), i.e.\ the generator counts appearing in the theorem are consistent.

\begin{remark}\label{rem:trivial-involution}
  In most cases, the involution is trivial: it is trivial whenever \(\simpleroots{}\) is a disjoint union of types \(A_1\), \(B_k\), \(D_{2k}\), \(E_7\), \(E_8\), \(F_4\) or \(G_2\) \cite[\S\,27-2]{kane}.  For types \(A_n\), \(D_{2k+1}\) and \(E_6\), the involution is as indicated in the following graphical depictions:
  \[
    \dynkin[involutions={15;24}] A{ooooo}
    \quad
    \dynkin[involutions={[in=120,out=60,relative]45}] D{ooooo}
    \quad
    \dynkin[involutions={16;35}] E{oooooo}
  \]
\end{remark}

How does \cref{thm:old} generalize to other types of flag varieties \(G/\levisubgroup{}\)?  The explicit calculations in \cite{hemmert1} for \(G\) of classical type indicate that we may expect the Witt ring of \(G/\levisubgroup{}\) to be a tensor product over \(\ZZII\) of \(\Witt^0(G/\levisubgroup{})\) and an exterior algebra on generators of odd degrees.
However, these calculations rest on a case-by-case analysis, and shed little light on the relation between, say, the number of generators and the combinatorial properties of the root systems involved.  In the current paper, we exhibit a large class of flag varieties for which \(\Witt^*(G/\levisubgroup{})\) is simply an exterior algebra, and for which the number of generators can be described in a fashion entirely analogous to that in \cref{thm:old}.

To describe both this class of varieties and the result, let us briefly recall the classification of flag varieties \(G/\levisubgroup{}\) under a fixed connected group \(G\).  We may assume that \(G\) is simply-connected, so that \(G\) is fully determined by the Dynkin diagram \(\simpleroots{}\).  Replacing \(\levisubgroup{}\) by a conjugate subgroup as necessary, we may assume that the root system of \(\levisubgroup{}\) is the closed sub-root system generated by the simple roots corresponding to some subdiagram  \(\simpleroots{H} \subset \simpleroots{}\).   In fact, there is a one-to-one correspondence between complex flag varieties under \(G\), up to \(G\)-diffeomorphism, and subdiagrams \(\simpleroots{H}\subset \simpleroots{}\), up to \(\Weyl{}\)-equivalence (see \cref{prop:classification} below).  We may thus specify a flag variety \(G/\levisubgroup{H}\) by displaying a Dynkin diagram \(\simpleroots{}\) with a subdiagram \(\simpleroots{H}\) whose nodes are marked black.  There is a simple graphical calculus for determining \(\Weyl{}\)-equivalent subsets of a Dynkin diagram, see for example \cite[\S\,28]{kane}.
\begin{example}%[complete flag variety]
  The diagram \(\dynkin A{ooooo}\) specifies \(\SU(6)/T\), the classical variety of flags of vector subspaces of \(0\subsetneq V_1 \subsetneq \dots \subsetneq V_5 \subsetneq \CC^6\)
\end{example}
\begin{example}\label{eg:P5}%[projective space]
  The diagram \(\dynkin A{o****}\) specifies \(\SU(6)/U(5)\), the complex projective space \(\CC\mathbb{P}^5\)
\end{example}
\begin{example}\label{eg:SU6max}%[variety of almost complete flag]
  The diagrams \(\dynkin A{*oooo}\), \(\dynkin A{o*ooo}\), \(\dynkin A{oo*oo}\), \(\dynkin A{ooo*o}\), \(\dynkin A{ooooo*}\) each specify one connected component of the variety parametrizing incomplete flags \(0\subsetneq V_1 \subsetneq V_2 \subseteq V_3 \subsetneq V_4 \subsetneq \CC^6\).  These components are all isomorphic.  As manifolds, they are even \(\SU(6)\)-equivariantly isomorphic, corresponding to the fact that the marked subsets are \(\Weyl{}\)-equivalent.
\end{example}
\begin{example}[spinor variety]\label{eg:spinor}
  The diagrams \(\dynkin D{****o}\), \(\dynkin D{***o*}\) and \(\dynkin B{***o}\) specify isomorphic varieties.  More explicitly, the third diagram corresponds to the spinor variety \(\mathrm{Gr}_{\SO}(4,9)\) of \(4\)-dimensional isotropic subspaces of \(\CC^9\)  with respect to a fixed non-degenerate symmetric bilinear form. The first two diagrams correspond to the two different connected components of the variety \(\mathrm{Gr}_{\SO}(5,10)\) of \(5\)-dimensional isotropic subspaces of \(\CC^{10}\).  Both components are isomorphic to \(\mathrm{Gr}_{\SO}(4,9)\).  They are naturally homogeneous spaces under \(\SO(10)\), and they are mutually \(\SO(10)\)-equivariantly diffeomorphic, corresponding to the fact that the first two marked subsets are \(\Weyl{}\)-equivalent.
  (More generally, the two connected components of \(\mathrm{Gr}_{\SO}(n,2n)\) are \(\SO(2n)\)-equivariantly diffeomorphic for odd \(n\), but only non-equivariantly diffeomorphic for even \(n\).)
\end{example}
\begin{example}\label{eg:F4-exception}
  The diagram \(\dynkin F{o***}\) specifies the variety of subspaces of a complex Albert algebra that are \(6\)-dimensional and on which the multiplication is trivial \cite[\S\,9.1]{carrgaribaldi}.
\end{example}
Our main result applies to flag varieties for which the diagrams \(\simpleroots{}\) and \(\simpleroots{H}\) are both connected, as is the case in \cref{eg:P5,eg:SU6max,eg:spinor,eg:F4-exception} above.
Just as we have a canonical involution \(\dual{}\) on \(\simpleroots{}\), we have a canonical involution \(\dual{H}\) on \(\simpleroots{H}\).  We will show that the number of orbits of each involution determines the number of generators of the Witt ring:
\begin{maintheorem}\label{thm:main}
  Consider a complex flag variety \(G/\levisubgroup{H}\).  Assume that \(G\) and the semisimple part of \(\levisubgroup{H}\) are simple, or, equivalently, that both \(\simpleroots{}\) and \(\simpleroots{H}\) are connected.  The Witt ring of $G/\levisubgroup{H}$ is an exterior algebra on \(\fixrank{} - \fixrank{H}\) generators of odd degrees in each of the following cases:
  \begin{compactitem}
  \item \(G\) is of one of the classical types \(A_n\), \(B_n\), \(C_n\), \(D_n\), and \(\levisubgroup{H}\) is \emph{not} of type \(D_{2k}\) (for and \(k>1\)).
  \item \(G\) is of exceptional type, and the flag variety \(G/\levisubgroup{H}\) is marked with a \(\checkmark\) in \cref{table:results-connected}.
  \end{compactitem}
  % \begin{compactitem}
  % \item  \(\simpleroots{H}\) is connected in \(A_n\), \(B_n\) and \(C_n\)
  % \item \(\simpleroots{H}\) is connected in \(D_n\), but \(\simpleroots{H}\) is not of type \(D_{2k+1}\)
  % \item \(\simpleroots{H}\) is one of the connected subdiagrams of \(\simpleroots{}\) of exceptional types listed without decoration in \cref{table:results-connected}.
  % \end{compactitem}
\end{maintheorem}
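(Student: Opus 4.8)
The plan is to deduce the statement from \cref{thm:old} by exploiting the flag bundle
\[
  \levisubgroup{H}/T \longrightarrow G/T \xrightarrow{\;p\;} G/\levisubgroup{H}.
\]
Since \(\simpleroots{H}\) is connected, the fibre \(\levisubgroup{H}/T\) is the full flag variety of the simply-connected simple group with Dynkin diagram \(\simpleroots{H}\), so \cref{thm:old} applies to the fibre and to the total space at once: \(\Witt^*(\levisubgroup{H}/T)\) and \(\Witt^*(G/T)\) are exterior \(\ZZII\)-algebras on \(\fixrank{H}\) and \(\fixrank{}\) generators respectively. One also needs to recall from the proof of \cref{thm:old} that these generators arise, via the boundary maps in the long exact sequence relating \(\KO^*\) and \(\K^*\), from the reduced fundamental representations \(\rfundamentalRepresentation{\alpha}\) indexed by the \(\dual{H}\)-orbits on \(\simpleroots{H}\) (resp.\ the \(\dual{}\)-orbits on \(\simpleroots{}\)), the degree of each generator recording whether the corresponding representation is of real, complex or quaternionic type. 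The goal of the whole argument is then to show that \(p^*\) is injective with image a free exterior subalgebra of \(\Witt^*(G/T)\) on exactly the \(\fixrank{}-\fixrank{H}\) of these generators that ``belong to the base''.

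To turn the fibration into usable algebra, I would first record that \(G/\levisubgroup{H}\) is a cellular variety with all cells of even real dimension (Bruhat decomposition indexed by \(\Weylcosets{H}\)), so that \(\K^*(G/\levisubgroup{H})\) is free abelian and concentrated in even degrees, with \(\K^0(G/\levisubgroup{H})\cong\R(\levisubgroup{H})\otimes_{\R(G)}\ZZ\) generated as a ring by the classes of homogeneous bundles — the line bundles among them being the \(e^{\omega_\alpha}\) with \(\alpha\notin\simpleroots{H}\), the higher bundles those attached to the fundamental representations \(\fundamentalRepresentation{\alpha}\) of \(\levisubgroup{H}\) with \(\alpha\in\simpleroots{H}\), which restrict on each fibre to the standard generators of \(\K^*(\levisubgroup{H}/T)\). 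From here \(\K^*(G/T)\) is free over \(\K^*(G/\levisubgroup{H})\) on a basis lifting any basis of \(\K^*(\levisubgroup{H}/T)\); combining this Leray–Hirsch-type statement with the naturality of the \(\KO^*\)–\(\K^*\) sequence, with the known descriptions of \(\KO^*\) and \(\K^*\) of flag varieties in terms of representation rings (the conjugation involution on \(\K^*\) being induced by the longest element of \(\Weyl{}\)), and with the defining quotient \(\Witt^i = \KO^{2i}/\K\) should give that \(p^*\colon\Witt^*(G/\levisubgroup{H})\to\Witt^*(G/T)\) is split injective and that \(\Witt^*(G/T)\) is free over \(p^*\Witt^*(G/\levisubgroup{H})\) of rank \(2^{\fixrank{H}}\); in particular \(\dim_{\ZZII}\Witt^*(G/\levisubgroup{H}) = 2^{\fixrank{}-\fixrank{H}}\).

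This dimension count does not yet prove the theorem — a subalgebra of an exterior \(\ZZII\)-algebra of the right dimension need not itself be exterior, and need not sit in odd degrees — so the remaining, essential step is to produce the generators explicitly. For each node \(\alpha\in\simpleroots{}\) one tabulates the type of \(\fundamentalRepresentation{\alpha}\) once as a \(G\)-representation and, when \(\alpha\in\simpleroots{H}\), once as an \(\levisubgroup{H}\)-representation, and one checks that the generator of \(\Witt^*(G/T)\) attached to the \(\dual{}\)-orbit of \(\alpha\) lies in \(\im(p^*)\) precisely when that orbit ``does not meet \(\simpleroots{H}\)'' in the appropriate sense. When \(\dual{}\) carries \(\simpleroots{H}\) to itself and restricts there to \(\dual{H}\), the \(\dual{}\)-orbits split cleanly into those contained in \(\simpleroots{H}\) (matching the \(\dual{H}\)-orbits, hence accounting for the fibre generators) and the remaining \(\fixrank{}-\fixrank{H}\), whose generators are pulled back from \(G/\levisubgroup{H}\); being independent exterior generators upstairs, they generate a free exterior subalgebra of rank \(2^{\fixrank{}-\fixrank{H}}\), which by the preceding count must be all of \(p^*\Witt^*(G/\levisubgroup{H})\). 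In the opposite (``bad'') cases — in particular when \(\levisubgroup{H}\) is of type \(D_{2k}\) — one instead shows directly that the conclusion genuinely fails, the two half-spin representations forcing an extra generator, and in even degree.

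The main obstacle is precisely that \(\dual{}\) need not preserve \(\simpleroots{H}\), and even when it does it need not restrict to \(\dual{H}\); on top of this, \(\simpleroots{H}\) is only determined up to \(\Weyl{}\)-equivalence (\cref{prop:classification}), so one must also decide whether a well-adapted representative of the subdiagram can be chosen. Tracking the representation types through restriction from \(G\) to \(\levisubgroup{H}\) and through the boundary maps is therefore a finite but genuinely case-dependent computation rather than a formal manipulation. For \(G\) classical it can be organised into a single argument whose only obstruction is \(\levisubgroup{H}\) of type \(D_{2k}\); for \(G\) exceptional the finitely many connected subdiagrams \(\simpleroots{H}\subset\simpleroots{}\) are checked one at a time, the cases where this mechanism succeeds being exactly those recorded by a \(\checkmark\) in \cref{table:results-connected}.
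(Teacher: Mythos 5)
Your reduction hinges on two claims that are not established and, as stated, cannot be: first, that a K-theoretic Leray--Hirsch statement for the bundle \(\levisubgroup{H}/T \to G/T \to G/\levisubgroup{H}\) passes to Witt rings, so that \(p^*\) is split injective and \(\Witt^*(G/T)\) is free over \(p^*\Witt^*(G/\levisubgroup{H})\) of rank \(2^{\fixrank{H}}\), giving \(\dim_{\ZZII}\Witt^*(G/\levisubgroup{H}) = 2^{\fixrank{}-\fixrank{H}}\). Tate cohomology of a ring with involution does not commute with free module structures unless the involution is compatible with a chosen basis in a rather strong sense, and this compatibility is exactly the delicate point; in fact the dimension count you derive is \emph{false} in general, even under all the hypotheses you actually use (connected \(\simpleroots{}\), \(\simpleroots{H}\), existence of the fibration): for \(D_n\supset D_{2k}\) the Witt ring has two more exterior generators than \(\fixrank{}-\fixrank{H}\) (\cref{eg:DD}), and for \(\mathrm{EIII}\) it is concentrated in degree zero and strictly larger than \(\ZZII\) while \(\fixrank{}-\fixrank{H}=0\) (\cref{eg:EIII}). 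So no argument that uses only the fibration and connectedness can yield your rank count; the theorem's case restrictions must enter \emph{before} the dimension count, not merely in the final identification of generators. This is precisely why the paper works instead with \eqref{eq:main-iso-2} and the root-datum conditions \ref{cond:singlecell}/\ref{cond:orbits} on the fixed cone \(\closedWeylchamberZZ{H}^{\dual{H}}\), verified case by case, and then obtains the exterior structure from regular-sequence arguments in Tate cohomology (\cref{prop:key-Tate-argument}), rather than from any fibrewise multiplicativity of \(\Witt^*\).

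Second, your endgame assumes that in the cases of the theorem the orbits split cleanly, i.e.\ that \(\dual{}\) stabilizes \(\simpleroots{H}\) and restricts there to \(\dual{H}\) (up to choosing a good \(\Weyl{}\)-representative of the subdiagram). That is not true for a large portion of the cases the theorem covers: for example \(B_{n-1}\supset A_{n-2}\) (spinor varieties), \(C_n\supset A_{n-1}\), and all cases with \(G\) of type \(B_n\), \(C_n\), \(D_{2k}\), \(E_7\), \(E_8\), \(F_4\), \(G_2\) and \(\levisubgroup{H}\) of type \(A_k\) (\(k\geq 2\)), \(D_{\mathrm{odd}}\) or \(E_6\), where \(\dual{}\) is trivial but \(\dual{H}\) is not. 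In these cases the \(\dual{}\)-orbits contained in \(\simpleroots{H}\) do not count \(\fixrank{H}\), the fibre/base splitting of generators you describe breaks down, and no choice of representative subdiagram repairs it (the relevant subset in \cref{thm:main-degrees} is, e.g., a scattered union of \(A_1\)'s for spinor varieties, not a complement of \(\simpleroots{H}\)). So even granting a Witt-level Leray--Hirsch in some cases, your mechanism would only reach the cases where \(\dual{}\) restricts to \(\dual{H}\), which is far narrower than the theorem. The genuinely hard content — the comparison of \(\dual{H}\) with the ambient involution encoded in condition~\ref{cond:singlecell} and its case-by-case verification (\cref{thm:small-singlecell}) — is what your proposal defers to a ``finite but case-dependent computation'' without a framework in which that computation could succeed.
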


Evidently, more cases are excluded when \(G\) is of exceptional type than when \(G\) is of classical type.  However, while the claims for \(G\) of classical type are known or at least easily deducible from \cite{hemmert1},
the results for \(G\) of exceptional type are mostly new, and in this sense more interesting.  The proof we provide is new in all cases.  It is based on the observation that the root data of the flag varieties listed in \cref{thm:main} share a simple property that can be stated independently of any Witt theory: see condition~\ref{cond:singlecell} in \cref{sec:overview}.  At least for some of the flag varieties with \(\simpleroots{}\) and \(\simpleroots{H}\) connected that violate this condition, the conclusion of \cref{thm:main} is indeed not valid; see \cref{eg:DD} for the excluded classical case.

\begin{table}
  \thisfloatpagestyle{empty}
  \newlength{\offset}
  \setlength{\offset}{1ex}
  \newcommand{\extfails}[1]{
    \begin{tikzpicture}
      \node[inner sep=0] (image) at (0,0) {#1};
      \draw[gray] (image.south east) -- (image.north west);
      \draw[gray] (image.north east) -- (image.south west);
    \end{tikzpicture}
  }
  \newcommand{\extunknown}[1]{
    \begin{tikzpicture}
      \node at (0,0) {\textcolor{gray}{\Huge \textbf{?}}};
      \node[inner sep=0] at (0,0) {#1};
    \end{tikzpicture}
  }
  % single cell (and hence orbit basis and hence ext):
  \newcommand{\singlecell}[1]{
    \begin{tikzpicture}
      \node[inner sep=0] (image) at (0,0) {#1};
      \node at ($(image.north east)+(0,\offset)$) {\checkmark};
    \end{tikzpicture}
  }
  % orbit basis (and hence ext), but not single cell:
  \newcommand{\orbitbasis}[1]{
    \begin{tikzpicture}
      \node[inner sep=0] (image) at (0,0) {#1};
      \node at ($(image.north east)+(0,\offset)$) {(ob)};
    \end{tikzpicture}
  }
  % ext true, but single cell and orbit basis fail:
  \newcommand{\exttrue}[1]{
    \begin{tikzpicture}
      \node[inner sep=0] (image) at (0,0) {#1};
      \node at ($(image.north east)+(0,\offset)$) {(ext)};
    \end{tikzpicture}
  }
  \begin{adjustwidth}{-4cm}{-4cm}
    \centering
    \renewcommand{\arraystretch}{1.5}
    \setlength{\offset}{1ex}
    \begin{tabular}{L}
      G_2\\
      \\
      \singlecell{\dynkin G{oo}}\\
      \\
      \singlecell{\dynkin G{*o}}\\
      \singlecell{\dynkin G{o*}}\\
    \end{tabular}
    \begin{tabular}{L}
      \hspace{1em}F_4 \\
      \\
      \singlecell{\dynkin F{oooo}}\\
      \\
      \singlecell{\dynkin F{*ooo}}\\
      \singlecell{\dynkin F{ooo*}}\\
      \singlecell{\dynkin F{**oo}}\\
      \singlecell{\dynkin F{oo**}}\\
      \\
      \orbitbasis{\dynkin F{***o}}\\
      \\
      \exttrue{\dynkin F{o***}}\\
    \end{tabular}
    \setlength{\offset}{-1ex}
    \begin{tabular}{L}
      \hspace{1.5em}E_6 \\
      \\
      \singlecell{\dynkin E{oooooo}}\\
      \\
      \singlecell{\dynkin E{*ooooo}}\\
      \singlecell{\dynkin E{*o*ooo}}\\
      \singlecell{\dynkin E{*o**oo}}\\
      \singlecell{\dynkin E{*o***o}}\\
      \orbitbasis{\dynkin E{*o****}}\\
      \\
      \extunknown{\dynkin E{o****o}}\\
      \extfails{\dynkin E{*****o}}\\
    \end{tabular}
    \begin{tabular}{L}
      \hspace{2em}E_7\\
      \\
      \singlecell{\dynkin E{ooooooo}}\\
      \\
      \singlecell{\dynkin E{*oooooo}}\\
      \singlecell{\dynkin E{*o*oooo}}\\
      \singlecell{\dynkin E{*o**ooo}}\\
      \singlecell{\dynkin E{*o***oo}}\\
      \orbitbasis{\dynkin E{*o****o}}\\
      \singlecell{\dynkin E{o*o****}}\\
      \orbitbasis{\dynkin E{*o*****}}\\
      \\
      \extunknown{\dynkin E{o****oo}}\\
      \extunknown{\dynkin E{o*****o}}\\
      \extunknown{\dynkin E{o******}}\\
      \\
      \singlecell{\dynkin E{******o}}\\
    \end{tabular}
    \begin{tabular}{L}
      \hspace{2em}E_8\\
      \\
      \singlecell{\dynkin E{oooooooo}}\\
      \\
      \singlecell{\dynkin E{*ooooooo}}\\
      \singlecell{\dynkin E{*o*ooooo}}\\
      \singlecell{\dynkin E{*o**oooo}}\\
      \singlecell{\dynkin E{*o***ooo}}\\
      \extunknown{\dynkin E{*o****oo}}\\
      % \extunknown{\dynkin E{o*o****o}}\\ W-equivalent to the next one!
      \extunknown{\dynkin E{*o*****o}}\\
      \extunknown{\dynkin E{*o******}}\\
      \\
      \extunknown{\dynkin E{o****ooo}}\\
      \extunknown{\dynkin E{o*****oo}}\\
      \extunknown{\dynkin E{o******o}}\\
      \extunknown{\dynkin E{o*******}}\\
      \\
      \singlecell{\dynkin E{******oo}}\\
      \extunknown{\dynkin E{*******o}}\\
    \end{tabular}
  \end{adjustwidth}
  \caption{Results for \(G\) of exceptional type.
    Displayed is a complete list of flag varieties \(G/\levisubgroup{H}\) with \(G\) simply-connected of exceptional type, classified according to \cref{prop:classification}.
    The flag varieties marked with a \(\checkmark\) are those to which \cref{thm:main,thm:main-degrees} apply -- they satisfy the condition~\ref{cond:singlecell}.  The flag varieties marked (ob) only satisfy the weaker condition~\ref{cond:orbits}.   The flag variety of type \(F_4\) marked (ext) satisfies neither of these conditions, but its Witt ring is nevertheless an exterior algebra on generators of odd degree (see \cref{sec:F4}).  The Witt ring of the flag variety \(\mathrm{EIII}\) corresponding to the crossed out diagram is known not to be an exterior algebra on odd-degree generators (see \cref{eg:EIII}).
  }
  \label{table:results-connected}
\end{table}

\begin{table}
  \begin{center}
    \renewcommand{\arraystretch}{1.5}
    \begin{tabular}{LLLLL}
      \toprule
      A_{n, n \text{ even}}      & \dynkin A{xx.xx.xx}   & \quad & D_{n, n \text{ odd}}     & \dynkin D{**.*xx}   \\
      A_{n, n \equiv 1 \mod 4}   & \dynkin A{xx.xox.xx}  &       & D_{n, n \equiv 0 \mod 4} & \dynkin D{**.***}   \\
      A_{n, n \equiv 3 \mod 4}   & \dynkin A{xx.x*x.xx}  &       & D_{n, n \equiv 2 \mod 4} & \dynkin D{**.*oo}   \\
                                 &                       &                                  &                     \\
      B_{n, n \equiv 1,2 \mod 4} & \dynkin B{**.*o}      &       & E_6                      & \dynkin E{x*x*xx}   \\
      B_{n, n \equiv 0,3 \mod 4} & \dynkin B{**.**}      &       & E_7                      & \dynkin E{*o**o*o}  \\
                                 &                       &       & E_8                      & \dynkin E{********} \\
      C_{n, n \text{ odd}}       & \dynkin C{o*o*o*..o*} &       &                          &                      \\
      C_{n, n \text{ even}}      & \dynkin C{o*o*o*..*o} &       & F_4                      & \dynkin F{****}     \\
                                 &                       &       & G_2                      & \dynkin G{**}       \\
      \bottomrule
    \end{tabular}
  \end{center}
  \caption{The types of the fundamental representations corresponding to each node of the Dynkin diagram of a simply-connected compact Lie group \(G\) (see \cite[Table~1 at the end of Chapter~VIII]{bourbaki789} or \cite[Table~3.1]{davis:types})
    \[
      \dynkin A{x} = \text{comple}\dynkin A{x}\text{ type},\quad
      \dynkin A{o} = \text{quaterni}\dynkin A{o}\text{nic type},\quad
      \dynkin A{*} = \text{real type}
    \]
  }\label{table:types}
\end{table}

Determining the precise degrees of the generators in \cref{thm:main} requires substantially more effort.  We ultimately obtain the following uniform description.

\begin{maintheorem}\label{thm:main-degrees}
  In the cases listed in \cref{thm:old,thm:main}, the degrees of the generators of the Witt ring can be determined as follows.
  There exists a unique subdiagram \(I\subset\simpleroots{}\) with the following properties:
  \begin{compactenum}[(i)]
  \item \(I\) is symmetric in the sense that \(\dual{} I = I\).
  \item The involutions \(\dual{}\) and \(\dual{I}\) agree on \(I\).
  \item The involution \(\longest{H}\) is conjugate to \(\longest{}\longest{I} = \dual{}\dual{I}\) in the Weyl group.
  \end{compactenum}
  For this subset, \(\fixrank{I} = \fixrank{} - \fixrank{H}\).  Let \(b_\RR(I)\), \(b_\HH(I)\) and \(b_\CC(I)\)  denote the numbers of fundamental representations of real, quaternionic and complex types among the fundamental representations of \(G\) that correspond to nodes in \(I\).  The Witt ring of \(G/\levisubgroup{H}\) is an exterior algebra on \(b_\HH(I)\) generators of degree~1, and \(\left(\frac{b_\CC(I)}{2}+ b_\RR(I)\right)\) generators of degree~3.
\end{maintheorem}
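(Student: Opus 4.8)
The plan is to deduce \cref{thm:main-degrees} from \cref{thm:main}, separating its combinatorial content (the existence and uniqueness of $I$, and the identity $\fixrank{I}=\fixrank{}-\fixrank{H}$) from its topological content (that the degrees of the exterior generators are governed by the types of the fundamental representations indexed by $I$). The starting observation is that for a $\dual{}$-stable subdiagram $I$ (property~(i)) the longest element $\longest{}$ normalises the parabolic subgroup $\Weyl{I}$ and hence conjugates $\longest{I}$ to the longest element of $\Weyl{I}$, which is again $\longest{I}$; so $\longest{}$ and $\longest{I}$ commute. Property~(ii) says exactly that $\dual{}$ and $\dual{I}$ induce the same permutation of the simple roots in $I$, equivalently that $\longest{}$ and $\longest{I}$ restrict to the same linear map on the subspace $V_I\subseteq\weightspaceRR$ spanned by those roots. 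Hence $\longest{}\longest{I}=\dual{}\dual{I}$ acts as the identity on $V_I$ and as $\longest{}$ on the orthogonal complement $V_I^{\perp}$.

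Granting that such an $I$ exists, the identity $\fixrank{I}=\fixrank{}-\fixrank{H}$ is pure linear algebra. For a $\dual{}$-stable subdiagram $J$, the map $-\longest{J}$ permutes the simple roots of $J$ with as many orbits as the dimension of its $(+1)$-eigenspace, while $\longest{J}$ fixes $V_J^{\perp}$ pointwise; writing $\minusdim$ for the dimension of the $(-1)$-eigenspace on $\weightspaceRR$, this gives $\fixrank{}=\minusdim(\longest{})$, $\fixrank{I}=\minusdim(\longest{I})$, and $\fixrank{H}=\minusdim(\longest{H})=\minusdim(\longest{}\longest{I})$ by property~(iii). Now decompose $\weightspaceRR$ into the joint eigenspaces $V_{\pm\pm}$ of the commuting involutions $\longest{}$ and $\longest{I}$. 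The $(-1)$-eigenspace $V_{+-}\oplus V_{--}$ of $\longest{I}$ lies in $V_I$, on which $\longest{}=\longest{I}$; so $V_{+-}=0$, and therefore $\fixrank{}-\fixrank{H}=\minusdim(\longest{})-\minusdim(\longest{}\longest{I})=(\dim V_{-+}+\dim V_{--})-(\dim V_{+-}+\dim V_{-+})=\dim V_{--}=\minusdim(\longest{I})=\fixrank{I}$.

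I would obtain the existence of $I$ from the proof of \cref{thm:main}: the flag varieties in its scope are precisely those satisfying condition~\ref{cond:singlecell}, and $I$ is the index of the single cell that this condition singles out (for the full flag varieties of \cref{thm:old} one takes $I=\simpleroots{}$); properties~(i) and~(ii) then hold by the explicit shape of that cell, and~(iii) by relating its combinatorics to $\longest{H}$ via the Weyl chamber calculus of \cite{kane}. For uniqueness, note that a subdiagram is recovered from its span (the simple roots being linearly independent, $I=\simpleroots{}\cap V_I$), that $\mathrm{Fix}(\longest{}\longest{J})=V_J+(\weightspaceRR)^{\longest{}}$ for $\dual{}$-stable $J$, and — decisively — that $\dual{}$-stability together with property~(ii) is highly restrictive: by \cref{rem:trivial-involution}, each connected component of a candidate $I$ must be of a type on which the ambient $\dual{}$ realises the canonical diagram involution. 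The candidates are thereby cut down to a short list, on which property~(iii) is readily seen to pick out a unique member (uniformly for $G$ of classical type, and by inspection of \cref{table:results-connected} for $G$ of exceptional type).

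It remains to pin down the degrees, and this is where most of the work lies, since \cref{thm:main} only yields an exterior algebra on $\fixrank{}-\fixrank{H}$ generators in odd degrees and not the split between degrees~$1$ and~$3$. Here I would return to the cellular computation and exhibit a generating set formed by the images of the reduced fundamental representations $\rfundamentalRepresentation{\alpha}$ of $G$, with $\alpha$ running over representatives of the $\dual{I}$-orbits in $I$ (a complex-conjugate pair $\{\alpha,\dual{}\alpha\}$ contributing a single generator). Exactly as in \cref{thm:old}, the image of $\rfundamentalRepresentation{\alpha}$ lies in $\Witt^1(G/\levisubgroup{H})$ when $\fundamentalRepresentation{\alpha}$ is of quaternionic type and in $\Witt^3(G/\levisubgroup{H})$ when it is of real or complex type; by property~(ii), the quaternionic and real nodes of $I$ (as representations of $G$) are $\dual{I}$-fixed while the complex ones occur in $\dual{I}$-pairs, so reading the types off \cref{table:types} yields $b_\HH(I)$ generators of degree~$1$ and $b_\RR(I)+\tfrac{b_\CC(I)}{2}$ of degree~$3$. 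The main obstacle is therefore to control the $\ZZ/4$-grading throughout — to show that the image of each $\rfundamentalRepresentation{\alpha}$ with $\alpha\in I$ is nonzero in $\Witt^*(G/\levisubgroup{H})$ and of the predicted parity, and that the remaining reduced fundamental representations of $G$ map into the subalgebra these generate — which is the ``substantially more effort'' referred to above.
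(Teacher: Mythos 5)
Your linear-algebra argument for \(\fixrank{I}=\fixrank{}-\fixrank{H}\) (commuting involutions \(\longest{}\), \(\longest{I}\), joint eigenspaces, \(V_{+-}=0\)) is correct and is essentially the same dimension count the paper uses inside \cref{lem:fixed-cells-2}. But the two claims that carry the theorem are left unproven. First, the degree determination — the actual content of \cref{thm:main-degrees} — is only asserted by analogy with \cref{thm:old} and then explicitly deferred (``the main obstacle is therefore to control the \(\ZZ/4\)-grading\dots''). Moreover, the mechanism you describe is not the right one: the odd-degree generators are \emph{not} the images of the reduced fundamental representations \(\rfundamentalRepresentation{\alpha}\), \(\alpha\in I\). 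Those images are \(\dual{H}\)-self-dual and in fact vanish in \(\h^+(\R(\levisubgroup{H}))\) (this is \cref{rem:Tate-of-RH-under-singlecell}, and it is precisely \emph{why} \(I\) indexes the exterior generators). The generators are instead classes \([\alphaU(u_{[\gamma]})]\in\h^-\) for elements \(u_{[\gamma]}\in\R(\levisubgroup{H})\) with \(u_{[\gamma]}+u_{[\gamma]}^*=i^*\rsymsum{}(\omega_\gamma)\) (resp.\ \(i^*\rsymsum{}(\omega_\gamma+\omega_{\dual{}\gamma})\) for a complex pair), produced by \cref{prop:key-Tate-argument}. Pinning their Witt degree to \(1\) or \(3\) requires: (a) knowing that the symmetric sums \(\symsum{}(\omega_\gamma)\), \(\symsum{}(\omega_\gamma+\omega_{\dual{}\gamma})\) are Witt-homogeneous of the same type as the corresponding irreducible representations (\cref{cor:symsum{}s-witt-homogeneous}, proved via the principal \(\SU(2)\), \cref{prop:real-iff-odd}); (b) the \(\RO\)/\(\RSp\)-level lifting argument of \cref{lem:degrees}; and (c) the replacement of the pair \(\rsymsum{}(\omega_\beta),\rsymsum{}(\omega_{\dual{}\beta})\) by the single self-dual relation \(\rsymsum{}(\omega_\beta+\omega_{\dual{}\beta})\) (\cref{prop:simplified-quotient}), without which the complex pairs cannot be handled at all. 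None of these steps appears in your proposal, so the theorem's main assertion is not established.

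Second, the combinatorial half also has a gap. Deducing properties (ii) and (iii) for the single-cell parameter \(I\) is not a matter of ``the explicit shape of that cell'' plus Kane's chamber calculus: condition~\ref{cond:singlecell} only gives, via \cref{lem:fixed-cells}, that \(\dual{}w^{-1}\dual{H}w\in\Weyl{I}\) and agrees with \(\dual{}\) on the cell; to upgrade this to the conjugacy \(\longest{H}\sim\longest{}\longest{I}\) one must show the conjugating element can be chosen to commute with \(\dual{}\), which the paper does by passing to Steinberg's folded Weyl group \(\Weylfolded\), using the compatibilities (F1)--(F2) with parabolics and longest elements, the fact that every involution is conjugate to a longest element of a parabolic, and a minimality argument forcing \(J=I\). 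Your uniqueness argument likewise reduces to an unexecuted case check (and your criterion ``each component of \(I\) must carry the canonical diagram involution'' is not by itself enough, since non-equal but \(\Weyl{}\)-conjugate candidates can share the conjugacy class of \(\longest{}\longest{I}\)); as stated it is a sketch, not a proof.
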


While the statement may look complicated, it is typically easy to apply.  We briefly illustrate this with a few examples, old and new.

\begin{example}[Full flags]
  Consider a full flag variety \(G/T\).  In this case, \(\simpleroots{H} = \emptyset\) and \(I=\simpleroots{}\), and we recover \cref{thm:old}.
\end{example}

\begin{example}[Projective spaces]
  Consider projective space \(\mathbb{C}\mathbb{P}^5\).  In this case, \((\simpleroots{},\simpleroots{H}) = \dynkin A{o****}\), and \(\fixrank{}-\fixrank{H} = 1\).  The only subsets \(I\) satisfying conditions (i) and (ii) in \cref{thm:main-degrees} are the ones marked with black nodes as follows:
  \[
    \dynkin A{ooooo} \quad \dynkin A{oo*oo} \quad \dynkin A{o***o} \quad \dynkin A{*****}
  \]
  These are clearly distinguished by the value of \(\fixrank{I}\), so the unique subset \(I\) singled out by \cref{thm:main-degrees} is the subset
  \[
    I = \dynkin A{oo*oo}
  \]
  with \(\fixrank{I} = 1\).  From the \cref{table:types} we see that the unique node of \(I\) corresponds to a quaternionic representation of \(G = \SU(6)\).  So \(\Witt^*(\mathbb{C}\mathbb{P}^5)\) is an exterior algebra on a single generator of degree~\(1\).  We leave it as an easy exercise for the reader to recover the known results for projective spaces of arbitrary dimension in this way -- \(\Witt^*(\mathbb{C}\mathbb{P}^n)\) is trivial when \(n\) is even, and it is an exterior algebra on one generator of degree equal to  \(n \mod 4\) when \(n\) is odd.
\end{example}

\begin{example}[\(E_8\)]
  Consider a flag variety \(E_8/\levisubgroup{H}\), with \(\levisubgroup{H}\) one of the subgroups listed in \cref{table:results-connected}.  All representations of \(E_8\) are of real type.  So in this case we can conclude without determining \(I\) that \(\Witt^*(E_8/\levisubgroup{H})\) is an exterior algebra on \(8 - \fixrank{H}\) generators of degree \(3\).  To the best of our knowledge, not even the additive structure of these Witt rings was known before.
\end{example}

In those cases in which applying \cref{thm:main-degrees} does require finer information on conjugacy classes of involutions, one may simply refer to the results tabulated in \cite{aux:involutions} to find the correct subsets \(I\subset \simpleroots{}\).  We mention just two examples:

\begin{example}[spinor varieties]
  Consider the spinor variety \(\mathcal S_n := \mathrm{Gr}_\SO(n-1,2n-1)\), described by the the diagram \(B_{n-1}\) with the marked subdiagram of type \(A_{n-2}\) (see \cref{eg:spinor}).  The involution \(\longest{H}\) is conjugate to the (standard) involution \(\longest{H'}\) corresponding to a subdiagram \(H'\) of type \((A_1)^{\lceil\frac{n-2}{2}\rceil}\):
  \[
    \begin{cases}
      \dynkin B{*o*o.*oo} & \text{ if \(n\) is even}\\
      \dynkin B{*o*o.o*o} & \text{ if \(n\) is odd}
    \end{cases}
  \]
  In this case, the subdiagram \(I\) determined by \cref{thm:main-degrees} is the following diagram of type \((A_1)^{\lfloor\frac{n}{2}\rfloor}\):
  \[
    \begin{cases}
      \dynkin B{*o*o.*o*} & \text{ if \(n\) is even}\\
      \dynkin B{*o*o.o*o} & \text{ if \(n\) is odd}
    \end{cases}
  \]
  (So for odd \(n\), \(I = H'\)).
  Indeed, the first two conditions of \cref{thm:main-degrees} are clearly satisfied for \(I\).  For the third condition, see \cite{aux:involutions}.
  % , and the dimensions of the \(\pm 1\)-eigenspaces of \(\longest{B_n}\longest{I} = -\longest{I}\) agree with the dimensions of the \(\pm 1\)-eigenspaces of \(\longest{H'}\).  This makes it plausible, at least, that \(\longest{B_n}\longest{I}\) is conjugate to \(\longest{H}\).   However, it is not the only subdiagram satisfying these conditions.  Thus, verifying that this is the correct choice of \(I\) requires some further effort; see for example \cite{mathoverflow:folding}.  In the end,
  We thus find that \(\Witt^*(\mathcal S_n)\) is an exterior algebra on \(\lfloor\frac{n}{2}\rfloor\) generators.  When \(n\equiv 2,3 \mod 4\), all generators are of degree~\(3\).  When \(n\equiv 0,1 \mod 4\), one generator is of degree~\(1\) and all others are of degree~\(3\).  This is consistent with older additive results (see \cite[\S\,4.6]{zibrowius:cellular}),
  which have recently been extended to spinor varieties over general bases \cite{hmx:spinor}.
\end{example}
\begin{example}[\(\mathrm{EVII}\)]
  The hermitian symmetric space \(\mathrm{EVII}\) is the homogeneous space under \(E_7\) corresponding to \(\dynkin E{******o}\).
  The involution \(\longest{H}\) is conjugate to the involution \(\longest{K}\) defined by the subset \(\dynkin E{o****o}\).
  % By considering sub-root systems contained in \(\pm 1\)-eigenspaces,
  In this case, \(\longest{K}\) is conjugate to \(\longest{}\longest{I} = -\longest{I}\) for the subset \(I = \dynkin E{o*oo*o*}\) \cite{aux:involutions}.  This subset \(I\) also satisfies the remaining conditions in \cref{thm:main-degrees}.  From \cref{table:types}, we see that the nodes of \(I\) correspond precisely to those fundamental representations of \(E_7\) which are quaternionic.  So \(\Witt^*(\mathrm{EIII})\) is an exterior algebra on three generators of degree one.  This is again compatible with the known additive results, see \cite{konohara:hermitian} and \cite[\S\,4.7]{zibrowius:cellular}.
\end{example}

Our final two examples indicate that some of the restrictions in \cref{thm:main,thm:main-degrees} are necessary:
\begin{example}[\(D_n \supset D_{2k}\)]\label{eg:DD}
  The flag varieties of classical type described by \(D_n \supset D_{2k}\) are explicitly excluded in \cref{thm:main}. Theorem~4.13 in \cite{hemmert1} shows that \cref{thm:main} indeed does not extend to these flag varieties.  (While their Witt rings are also exterior algebras, they have two more generators than \cref{thm:main} would suggest.)
\end{example}

\begin{example}[\(\mathrm{EIII}\)]\label{eg:EIII}
  The hermitian symmetric space \(\mathrm{EIII}\) is the homogeneous space under \(E_6\) corresponding to \(\dynkin E{*****o}\).
  As indicated in \cref{table:results-connected}, \cref{thm:main} does not apply in this case.  Indeed, \(\Witt^*(\mathrm{EIII})\) is known not to be an exterior algebra on generators of odd degrees. (The ring is concentrated in degree zero but non-trivial; see \cite{konohara:hermitian} and \cite[\S\,4.7]{zibrowius:cellular}.)
\end{example}

We therefore believe that the restrictions in \cref{thm:main} are close to optimal, i.e.\ that for most flag varieties not covered by the Theorem, the Witt ring is more complicated.  However, surprisingly, the Witt rings of flag varieties \(F_4/\levisubgroup{H}\) are always exterior algebras on the expected number of generators -- even when \(\levisubgroup{H}\) has several simple factors.  In the final section of this paper, we will show:

\begin{maintheorem}\label{thm:F4}
  The Witt ring of a flag variety $F_4/\levisubgroup{H}$ is an exterior algebra on $4-\fixrank{H}$ generators of degree~\(3\).
\end{maintheorem}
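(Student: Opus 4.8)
The plan is to reduce \cref{thm:F4} to a short finite case analysis. By \cref{prop:classification} there are only finitely many flag varieties \(F_4/\levisubgroup{H}\), and for those with \(\simpleroots{H}\) connected the assertion is already available: the full flag variety \(F_4/T\) is covered by \cref{thm:old} --- since \(\dual{}\) is trivial on the \(F_4\) diagram and every fundamental representation of \(F_4\) is of real type, \(\fixrank{}=4\) and all four generators lie in degree~\(3\) --- the \(\checkmark\)-cases of \cref{table:results-connected} are instances of \cref{thm:main,thm:main-degrees}, the \((\mathrm{ob})\)-case \(\dynkin F{***o}\) follows from the argument for condition~\ref{cond:orbits}, and the remaining connected cases (notably the \((\mathrm{ext})\)-case \(\dynkin F{o***}\) of type \(C_3\)) are settled directly in this section. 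In each of them the generators number \(4-\fixrank{H}\) and all sit in degree~\(3\): this is forced by \cref{thm:main-degrees} and \cref{table:types}, since the subdiagram \(I\subset\simpleroots{}\) that \cref{thm:main-degrees} produces consists entirely of real-type nodes, so that \(b_\HH(I)=0\) and \(b_\RR(I)=\fixrank{I}=4-\fixrank{H}\). The new content of \cref{thm:F4} is therefore the three flag varieties with \(\simpleroots{H}\) disconnected --- those of types \(A_1\tilde A_1\), \(A_2\tilde A_1\) and \(A_1\tilde A_2\) --- for each of which \(\fixrank{H}=2\), so the claim is that the Witt ring is an exterior algebra on two generators of degree~\(3\).

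For each of these three \(\simpleroots{H}\) I would run the computation underlying \cref{thm:main,thm:main-degrees} directly, without invoking conditions~\ref{cond:singlecell} or~\ref{cond:orbits}, which --- as part of the very phenomenon recorded here --- fail in these cases. First, exhibit a subdiagram \(I\subset\simpleroots{}\) satisfying the three conditions of \cref{thm:main-degrees}: \(\dual{}I=I\) and \(\dual{},\dual{I}\) agree on \(I\) (both are trivial, \(I\) being a two-node subdiagram of trivial involution with \(\fixrank{I}=2\)), and \(\longest{H}\) conjugate in \(\Weyl{}\) to \(\longest{}\longest{I}=-\longest{I}\); the last point is a conjugacy-of-involutions check of the kind recorded in \cite{aux:involutions}. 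Second, compute \(\KO^{*}(F_4/\levisubgroup{H})\) together with the image of \(\K\) under realification from the cellular, Weyl-coset model used to prove \cref{thm:main}, and verify that the associated spectral sequence degenerates; then the Witt ring is additively the predicted exterior algebra, with \(\Witt^{0}=\ZZII\), and inspection of the few products fixes the multiplicative structure. One can shorten the list slightly by using the fibration \(F_4/\levisubgroup{H}\to F_4/\levisubgroup{H'}\) whenever \(\simpleroots{H}\) extends to a connected proper subdiagram \(\simpleroots{H'}\) --- this happens for \(A_1\tilde A_1\subset B_3\), with fibre a classical flag variety treated in \cite{hemmert1} --- but since \(A_2\tilde A_1\) and \(A_1\tilde A_2\) do not so extend, a direct computation is needed in any case.

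The main obstacle is precisely that no clean conceptual criterion from \cref{sec:overview} is available in the disconnected cases, so for each one must rule out unexpected generators and relations by an explicit argument. That this is a genuine danger and not a formality is shown by \cref{eg:DD}: for the excluded family \(D_n\supset D_{2k}\) the analogous computation yields two \emph{extra} generators. What makes the \(F_4\) computations both tractable and, as it turns out, well-behaved is on the one hand the small rank --- the cellular complexes involved are tiny --- and on the other the fact that every fundamental representation of \(F_4\) is of real type, which confines all relevant classes to degrees \(0\) and \(3\) and leaves no room for the differentials that spoil the \(D_{2k}\) case. Carrying out the three degenerations explicitly is the bulk of the work; once they are established, the ring structure follows as in \cref{thm:main-degrees}.
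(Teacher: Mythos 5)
Your reduction to a finite case list is reasonable, and your count of the Weyl-equivalence classes with \(\simpleroots{H}\) disconnected (\(A_1\tilde A_1\), \(A_2\tilde A_1\), \(A_1\tilde A_2\), each with \(\fixrank{H}=2\)) is correct; but the logic of which cases are hard is backwards, and the two places where real work is required are not actually carried out. Condition~\ref{cond:orbits} does \emph{not} fail for the disconnected subsets: one checks (this is the content of Tables~\ref{tablef4_1} and~\ref{tablef4_2}) that for \(\dynkin F{*o*o}\), \(\dynkin F{**o*}\), \(\dynkin F{*o**}\) (and also for \(\dynkin F{***o}\)) the monoid \(\fixmonoid\) is free and each Weyl orbit \(\Weyl{}.\omega_i\) meets it in exactly one basis element, so these cases go through the general machinery of \cref{prop:Tate-of-H}, \cref{lemmatatecohfreemonoid} and \cref{prop:key-Tate-argument} with no new ideas. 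Having (incorrectly) discarded that route, you replace it by a computation of \(\KO^*(F_4/\levisubgroup{H})\) from a ``cellular, Weyl-coset model'' and a spectral sequence whose degeneration you merely assert; no such model or spectral sequence is set up anywhere in this paper (the proof of \cref{thm:main} works entirely with Tate cohomology of representation rings via \eqref{eq:Bousfield} and \eqref{eq:Hodgkin}), and ``carrying out the three degenerations'', which you yourself identify as the bulk of the work, is exactly what is missing. Invoking \cref{thm:main-degrees} for disconnected \(\simpleroots{H}\) is also not legitimate, since its hypotheses are the cases of \cref{thm:old,thm:main}; for \(F_4\) the degree statement instead follows from the fact that every representation of \(F_4\) is of real type, via \cref{prop:real-iff-odd} and \cref{lem:degrees}.

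Second, the one genuinely exceptional class, the connected case \(\dynkin F{o***}\) of type \(C_3\), is never addressed: you list it among the cases ``settled directly in this section'', which is circular, since that settling is precisely what has to be proved here. In that case both \ref{cond:singlecell} and \ref{cond:orbits} fail --- the monoid \(\fixmonoid\) is not free, having the relation \(2\tau_3=\tau_1+\tau_2\) --- and an explicit Tate-cohomology computation as in \cref{lemmaf4exceptionalcase} is needed to see that the quotient is still an exterior algebra on a single generator of odd degree. Without that computation, and without an actual argument for the three disconnected cases, your proposal proves \cref{thm:F4} only in the cases already covered by \cref{thm:main,thm:main-degrees}.
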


\begin{remark}\label{rem:AG}
  In algebraic geometry, we can also consider flag varieties over fields \(F\) other than \(\CC\).  The results of this paper remain true for flag varieties \(G/P\) over any algebraically closed field of characteristic not two.  Here, \(G\) is the simply-connected simple algebraic group corresponding to \(\simpleroots{}\), and \(P\) is the parabolic subgroup determined by \(\simpleroots{H}\) and a choice of Borel subgroup.  Indeed, by \cite[Corollary~4.1]{xie:sos}, the Witt groups of these flag varieties do not depend on the base field.  Alternatively, the proofs below, which are formulated mostly in the language of root data, can be performed directly in the more general setting.
  In the remainder of the paper, we will nonetheless stick to the topological point of view, because we feel that the results are accessible to a wider audience in this way. The results are new even in this setting.
\end{remark}

\subsection*{Applications to the Stable Converse Soul Question}
A Riemannian manifold \(M\) has the \emph{stable converse soul property} if, for every real vector bundle \(E\) over \(M\), the “stabilized” vector bundle \(E\times \RR^k\) admits a metric of non-negative sectional curvature, for sufficiently large \(k\).  \Cref{thm:old} was used in \cite{SCSQ} to show that a few low-rank full flag varieties indeed have this property \ibidem{Theorem~1.6}.  The same paper also exhibited two larger families of simply-connected compact homogeneous spaces that have this property:  all such spaces of dimension at most seven \ibidem{Theorem~1.7}, and a family containing those flag varieties \(G/\levisubgroup{H}\) for which the involution \(\dual{H}\) is trivial \ibidem{Theorem~1.8}.  \Cref{thm:main,thm:main-degrees} above can be used to produce further examples, in view of the following observation:
\begin{observation*}
  If \(\Witt^*(G/\levisubgroup{H})\) is an exterior algebra on at most three generators of degree~1, or an exterior algebra on at most three generators of degree~3, then \(G/\levisubgroup{H}\) has the stable converse soul property.
\end{observation*}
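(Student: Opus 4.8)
The plan is to combine \cref{thm:main,thm:main-degrees} with the general criteria established in \cite{SCSQ}. Recall the mechanism there: equipping a simply-connected compact homogeneous space \(M=G/\levisubgroup{H}\) with a normal homogeneous metric, every homogeneous real vector bundle over \(M\) -- that is, every bundle associated with a representation of \(\levisubgroup{H}\) -- inherits an invariant metric of non-negative sectional curvature; together with bundles pulled back from spheres of sufficiently small dimension, such bundles span a subgroup of \(\widetilde{\KO}{}^0(M)\) all of whose elements are represented by stably non-negatively curved bundles. In \cite{SCSQ} this is distilled into a purely \(\KO\)-theoretic sufficient condition for the stable converse soul property, which -- in the form relevant here -- asserts that \(M\) has the property as soon as \(\widetilde{\KO}{}^0(M)\) is generated, as an abelian group, by at most three classes of this ``realizable'' kind. (The families in \ibidem{Theorems~1.7 and 1.8} are instances.) The first step of the proof is simply to record this criterion in precisely the shape we need.

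The second step is to verify the criterion for the flag varieties in question using our description of \(\Witt^*(M)\). Here one exploits three facts recalled in the introduction: flag varieties are \(\KO\)-cellular, so \(\KO^*(M)\) is a finitely generated module over \(\KO^*(\mathrm{pt})\) admitting a cell basis; the Witt ring captures all torsion in \(\KO^*(M)\), so an exterior-algebra description of \(\Witt^*(M)\) determines \(\KO^*(M)\) completely; and an exterior algebra on at most three generators of a single odd degree has total \(\ZZII\)-dimension at most \(2^3=8\), spread over the four Witt-degrees dictated by that generator. Running this through the long exact sequence \(\cdots\to\widetilde{\KO}{}^{n-1}(M)\xrightarrow{\cdot\eta}\widetilde{\KO}{}^{n}(M)\to\widetilde{\K}{}^{n}(M)\to\cdots\) relating \(\KO\)- and \(\K\)-theory bounds the torsion subgroups of \(\widetilde{\KO}{}^{*}(M)\) and, more to the point, produces explicit generators of \(\widetilde{\KO}{}^0(M)\). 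The hypothesis that all exterior generators lie in degree~\(1\), respectively all in degree~\(3\) -- rather than being mixed -- is exactly what makes these generators realizable in the sense of step one; this is where the identification of the subdiagram \(I\subset\simpleroots{}\) in \cref{thm:main-degrees} and the representation-type data of \cref{table:types} enter.

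I expect this second step to be the main obstacle. Exhibiting the correct \emph{number} of generators is the easy part; the work lies in verifying that the generators one extracts are genuinely of the realizable kind -- homogeneous, or pulled back from a low-dimensional skeleton -- rather than merely few in number, and in controlling the interaction between \(\widetilde{\KO}{}^0(M)\) and the higher \(\KO\)-groups that the odd-degree Witt generators detect. Once the dictionary between Witt-ring generators and \(\KO\)-theory generators is in place -- essentially a diagram chase with the realification sequence over \(M\) and over the point -- the Observation follows by direct appeal to the criterion recalled in step one.
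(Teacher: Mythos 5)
There is a genuine gap here: your argument hinges on a ``purely \(\KO\)-theoretic sufficient condition'' (that \(G/\levisubgroup{H}\) has the stable converse soul property as soon as \(\smash{\widetilde{\KO}}{}^0\) is generated by at most three ``realizable'' classes) which you do not prove and which is not a stated result of \cite{SCSQ} -- you yourself describe it as something to be ``distilled'' from that paper. The entire content of the Observation is precisely that distillation: the intended proof is simply to note that the argument given in the proof of \ibidem{Proposition~5.6} applies verbatim whenever \(\Witt^*(G/\levisubgroup{H})\) is an exterior algebra on at most three generators concentrated in a single odd degree (1 or 3), with no new \(\KO\)-theoretic computation required. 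By replacing this with an unproved criterion plus a verification step that you explicitly concede is ``the main obstacle'' and leave open (the passage from the exterior-algebra description of \(\Witt^*\) through the realification sequence to explicit homogeneous generators of \(\smash{\widetilde{\KO}}{}^0\) is only sketched, not carried out), the proposal does not actually establish the statement; it is a plan whose hardest step is deferred.

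A secondary confusion: the Observation is a self-contained implication about any flag variety whose Witt ring has the stated form, so \cref{thm:main,thm:main-degrees} and the representation-type data of \cref{table:types} play no role in its proof -- they enter only afterwards, when one applies the Observation to produce concrete examples such as the two varieties displayed in the introduction. Folding them into the proof, as you do in your second step (``this is where the identification of the subdiagram \(I\subset\simpleroots{}\) \dots enters''), conflates the hypothesis of the Observation with the means of verifying that hypothesis in examples. Likewise, the assertion that the Witt ring ``determines \(\KO^*(M)\) completely'' overstates what is available: the Witt ring captures the torsion of \(\KO^*\), but the extension data and the identification of which stable classes are represented by homogeneous bundles are exactly the nontrivial points that the proof of \ibidem{Proposition~5.6} handles and that your sketch does not.
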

Indeed, this is immediate from the proof of \ibidem{Proposition~5.6}.  As examples, consider the flag varieties \(X_1\) and \(X_2\) specified by the following two diagrams:
\[
  \dynkin E{*o*ooo} \quad\quad \dynkin F{oo**}
\]
In either case, \cref{thm:main,thm:main-degrees} imply that \(\Witt^*(X_i)\) is an exterior algebra on 3 generators of degree~3, and hence that \(X_i\) has the stable converse soul property.  Note that these examples fall into none of the classes mentioned above:  they are not full flags, they have large dimensions (66 and 42, respectively), and the involution \(\dual{H}\) is non-trivial.
\subsection*{Acknowledgments}
This paper generalizes and extends results obtained by the first author in his thesis \cite{hemmert:thesis}.  In particular, \cref{thm:F4} was already included in \cite{hemmert:thesis} as Theorem~7.13.  Also, the special case of \cref{thm:main} in which \(\simpleroots{H}\) is of type \(A_1\) was already present as Theorem~7.9, up to a gap filled by \cref{sec:simplify-setup} below.  Some preliminary results for \(G=E_7\), superseded by the results presented here, were obtained by Kamran Alizadeh Rad \cite{kar:thesis}. \Cref{prop:real-iff-odd}, or some version thereof, was communicated to the second author by Ian Grojnowski a long time ago. Skip Garibaldi supplied some references and many constructive comments.

Many computations, preliminary and final, were vastly simplified by the Macaulay2 package \texttt{WeylGroups} \cite{M2:WeylGroups} written by Baptiste Calmès and Viktor Petrov.  The Dynkin diagrams were typeset using the \texttt{dynkin-diagrams} package developed by Ben McKay.

\newpage

\tableofcontents

\vspace{2cm}
\section{Overview}
\label{sec:overview}
The computation of the Witt ring \(\Witt^*(G/\levisubgroup{H})\) can be reduced to computations in the character lattice \(\weightspaceZZ\) of a maximal torus \(T\subset \levisubgroup{H} \subset G\), and to an analysis of the actions on \(\weightspaceZZ\) of the Weyl groups \(\Weyl{}\) and \(\Weyl{H}\) of \((G,T)\) and \((G,\levisubgroup{H})\), respectively.   In short, this reduction proceeds as follows.
The Witt ring is determined by the complex K-ring \(\K(G/\levisubgroup{H})\) together with the usual involution on \(\K(G/\levisubgroup{H})\) that sends a vector bundle to its dual.  This K-ring in turn can be computed from the complex representation rings \(\R G\) and \(\R(\levisubgroup{H})\), and these representations rings can be identified with the subrings of  \(\R T\) that are invariant under the Weyl groups \(\Weyl{}\) and \(\Weyl{H}\), respectively.  Here, \(\R T \cong \ZZ[\weightspaceZZ]\), and the actions of the Weyl groups on \(\R T\) are induced from their actions on \(\weightspaceZZ\).   Moreover, under all these identifications, the usual involution on \(\K(G/\levisubgroup{H})\) is induced by the canonical involution \(\dual{H}\) on \(\weightspaceZZ\).

The simple roots in \(\simpleroots{H}\) determine a Weyl chamber \(\closedWeylchamberZZ{H}\subset \weightspaceZZ\), which is preserved by the involution \(\dual{H}\).  We will eventually reduce all claims to an analysis of the cone of fixed points \(\closedWeylchamberZZ{H}^\dual{H}\) of this action.  \Cref{thm:main} rests on a chain of implications between the following conditions:

\renewcommand{\implies}{}
\begin{description}
\item[(connected$^+$)\label{cond:connected}]\mbox{}\\
  \((G,\levisubgroup{H})\) are as permitted in \cref{thm:main}.
\implies
\item[(single cell)\label{cond:singlecell}]~with parameter \(I\subset \simpleroots{}\)\\
  The cone of \(\dual{H}\)-fixed points \(\closedWeylchamberZZ{H}^\dual{H}\) is a translate under the action of \(\Weyl{}\) of the \(\dual{}\)-fixed points of a face \(\cellclosureZZ{I}\) of the fundamental Weyl chamber \(\closedWeylchamberZZ{}\) of \(G\).  In symbols:  \(\closedWeylchamberZZ{H}^\dual{H} = w(\cellclosureZZ{I}^\dual{})\) for some \(w\in\Weyl{}\).
  \implies
\item[(orbit basis)\label{cond:orbits}]~with parameter \(I\subset\simpleroots{}\)\\
  Consider the fundamental weights \(\omega_{\alpha}\in\weightspaceZZ\) of \(G\) corresponding to the simple roots \(\alpha\in\simpleroots{}\). For a \(\dual{}\)-orbit \([\alpha]\subset\simpleroots{}\), define
  \begin{align*}
    \omega_{[\alpha]} := \begin{cases}
      \omega_{\alpha} &\text{ if } \dual{}\alpha = \alpha\\
      \omega_{\alpha} + \omega_{\dual{}\alpha} &\text{ if } \dual{}\alpha \neq \alpha
      \end{cases}
  \end{align*}
  Suppose \(I\subset\simpleroots{}\) is a subset that is symmetric in the sense that \(\dual{} I = I\).  Condition~\ref{cond:orbits} holds with respect to \(I\) if the following is true:
  For each \(\dual{}\)-orbit \([\alpha]\subset\simpleroots{}\setminus I\), the Weyl orbit \(\Weyl.\omega_{[\alpha]}\) intersects \(\closedWeylchamberZZ{H}^\dual{H}\) in a single weight \(\tau_{[\alpha]}\).
  Moreover, these weights \(\tau_{[\alpha]}\) form a basis of \(\closedWeylchamberZZ{H}^\dual{H}\).
  \implies
\item[(ext)\label{cond:ext}]\mbox{}\\
  The Witt ring of \(G/\levisubgroup{H}\) is an exterior algebra on \(\fixrank{}-\fixrank{H}\) generators.
\end{description}

\begin{figure}
  \newcommand{\ctext}[2]{\text{\parbox{#1}{\centering #2}}}
  \[
    \begin{tikzcd}
      \text{\ref{cond:connected}} \arrow[r, ultra thick, Rightarrow, "\text{\cref{sec:connected-singlecell}}"] & \text{\ref{cond:singlecell}\(_I\)} \arrow[d, ultra thick, Rightarrow, "\text{\cref{sec:singlecell-orbits}}"] \arrow[r, Rightarrow, "\text{\cref{sec:main-degrees}}"] & \ctext{7em}{degrees as in \cref{thm:main-degrees}} \\
      \ctext{9em}{\(G = F_4\)\\\(\simpleroots{H}\neq \{\alpha_2,\alpha_3,\alpha_4\}\)} \arrow[r, Rightarrow]       & \text{\ref{cond:orbits}\(_I\)} \arrow[r, ultra thick, Rightarrow, "\text{\cref{sec:orbits-ext}}"]                     & \text{\ref{cond:ext}}
    \end{tikzcd}
  \]
  \caption{Structure of the proofs}
  \label{fig:layout}
\end{figure}

The implications between these conditions that constitute a proof of \cref{thm:main} are indicated by the bold arrows in \cref{fig:layout}.  We will begin, in \cref{sec:setup,sec:simplify-setup}, by recalling and extending the relevant general results from \cite{zibrowius:koff} and \cite{hemmert1} that establish the link between root system combinatorics and Witt rings.
The first implication is then verified in \cref{sec:connected-singlecell}.  For \(\fixrank{H} = 1\), there is a simple general argument, but in general we do need to make some case-by-case arguments for the different types of \(G\).  For \(G\) of exceptional type, this slightly tedious analysis can be cut short by an implementation in Macaulay2 \cite{M2}, using the Weyl groups package \cite{M2:WeylGroups}. The code is available at \cite{code}.  The second and third implications can be treated in a much more streamlined fashion, and this is carried out in \cref{sec:singlecell-orbits,sec:orbits-ext}, as indicated.  \Cref{thm:main} clearly follows from these implications.  While it would also be possible, and in fact easier, to obtain this result directly from \ref{cond:singlecell}, we have included a small detour via \ref{cond:orbits} to prepare for the proof of \cref{thm:F4}.

\Cref{thm:main-degrees} requires the stronger assumption~\ref{cond:singlecell}.  We need to chase the degrees of generators through all steps of the proof of \cref{thm:main}.  This is carried out in \cref{sec:main-degrees}.

Finally, the cases of \Cref{thm:F4} (for \(G\) of type \(F_4\)) that are not covered by \cref{thm:main} are proved in \cref{sec:F4}.  While condition~\ref{cond:singlecell} fails, it turns out that all but one of the remaining cases at least satisfy the slightly weaker condition \ref{cond:orbits}.  The last remaining case, with \(\levisubgroup{H}\) of type \(C_3\), requires a short explicit computation.

\section{The setup}

\subsection{Classification of flag varieties}
Consider a compact connected semisimple Lie group \(G\).  As mentioned in the introduction, a homogeneous space \(G/\levisubgroup{}\) is a projective complex variety, and hence a flag variety in the algebro-geometric sense, if and only if \(\levisubgroup{}\) is the centralizer of a torus  in \(G\) \cite[Théorèmes 1, 2]{serre1}. These subgroups \(\levisubgroup{}\) are occasionally referred to as the \emph{Levi subgroups} of \(G\) (e.g.\ \cite[after Theorem~11]{vogan:challenges}).  They can be classified as follows:

\begin{prop}\label{prop:classification}
  Fix a compact connected Lie group \(G\) and a maximal torus \(T\subset G\). Let \(\Weyl{}\) denote the associated Weyl group, and \(\simpleroots{}\) an associated set of simple roots.  For a subset \(I\subset \simpleroots{}\), let \(\levisubgroup{I}\) denote the centralizer of \(\idcomponent{\left(\cap_{\alpha\in I} \ker \alpha\right)}\).  This defines a bijective correspondence:
  \begin{align*}
    \frac{\{ \text{subsets of } \simpleroots{} \}}{\Weyl{}\text{-equivalence}}
    &\xrightarrow{\quad 1:1\quad }
    \frac{\{ \text{ centralizers of tori in \(G\)} \}}{\text{ conjugation in \(G\)}}
    \\
    I \quad & \quad\mapsto\quad\quad \levisubgroup{I}
  \end{align*}
\end{prop}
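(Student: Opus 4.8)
The plan is to reduce an arbitrary Levi subgroup to a ``standard'' one $\levisubgroup{I}$ by a Weyl group translation, and then to read off the equivalence relation from the conjugacy of maximal tori. Two classical facts do most of the work: the centralizer of a torus in a compact connected Lie group is connected (so it is a closed connected subgroup containing a maximal torus), and a closed connected subgroup of $G$ containing $T$ is uniquely determined by the set of roots occurring in it. In particular, writing $S_I := \idcomponent{\left(\bigcap_{\alpha\in I}\ker\alpha\right)}$, the group $\levisubgroup{I} = Z_G(S_I)$ is the unique closed connected subgroup with maximal torus $T$ and root system $\mathcal R \cap \ZZ I$ (the set of roots lying in the span of $I$), since a root vanishes on $S_I$ precisely when it is an integral combination of the roots of $I$.

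\emph{Surjectivity and well-definedness.} Given a torus $S$, first conjugate it into $T$. Then $Z_G(S)$ is connected, contains $T$, and has root system $\Psi = \{\alpha \in \mathcal R : \alpha|_S = 0\}$. Pick $x$ in the Lie algebra of $S$ generic enough that $\{\alpha \in \mathcal R : \alpha(x) = 0\} = \Psi$, and choose $w \in \Weyl{}$ with $wx$ in the closed fundamental Weyl chamber $\closedWeylchamberZZ{}$. The roots vanishing at $wx$ are then exactly $\mathcal R \cap \ZZ I$, where $I \subset \simpleroots{}$ is the set of simple roots vanishing at $wx$; hence $w\Psi = \mathcal R \cap \ZZ I$. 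Conjugating $Z_G(S)$ by a lift of $w$ to $N_G(T)$ yields a closed connected subgroup containing $T$ with root system $\mathcal R \cap \ZZ I$, which by the uniqueness above must equal $\levisubgroup{I}$. So every centralizer of a torus is conjugate to some $\levisubgroup{I}$. The same comparison of root systems shows the assignment descends to classes: if $w(\mathcal R \cap \ZZ I) = \mathcal R \cap \ZZ I'$ for some $w \in \Weyl{}$ — which is what ``$\Weyl{}$-equivalence'' of subsets means here — then $\levisubgroup{I}$ and $\levisubgroup{I'}$ become equal after conjugation by a lift of $w$, hence are conjugate.

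\emph{Injectivity.} Suppose $g\levisubgroup{I}g^{-1} = \levisubgroup{I'}$. Since $T \subset \levisubgroup{I}$, both $T$ and $gTg^{-1}$ are maximal tori of the connected group $\levisubgroup{I'}$, hence conjugate by some $h \in \levisubgroup{I'}$; replacing $g$ by $hg$ (which still conjugates $\levisubgroup{I}$ onto $\levisubgroup{I'}$, as $h \in \levisubgroup{I'}$) I may assume $g \in N_G(T)$, representing some $w \in \Weyl{}$. Comparing root systems then gives $w(\mathcal R \cap \ZZ I) = \mathcal R \cap \ZZ I'$, i.e.\ $I$ and $I'$ are $\Weyl{}$-equivalent, which is the required conclusion.

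\emph{Main obstacle.} The one substantive ingredient is the combinatorial fact used in the surjectivity step: the roots vanishing at a point $y \in \closedWeylchamberZZ{}$ are generated by the simple roots vanishing at $y$. This is standard (see e.g.\ \cite[\S\,28]{kane}, whose graphical calculus is already used above) and has a two-line proof — writing such a root as $\alpha = \sum_{\beta \in \simpleroots{}} c_\beta \beta$ with all $c_\beta$ of one sign, the identity $\sum_\beta c_\beta\,\beta(y) = 0$ with $\beta(y) \ge 0$ throughout forces $\beta(y) = 0$ whenever $c_\beta \ne 0$. Everything else — connectedness of $Z_G(S)$, the rigidity of connected subgroups containing $T$, and conjugacy of maximal tori — enters only as standard black boxes, so I expect no real difficulty beyond this point.
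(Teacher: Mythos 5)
Your argument is correct, and it is more self-contained than the paper's. The paper proves well-definedness from the explicit definition of \(\levisubgroup{I}\), cites the first author's thesis for surjectivity of \(I\mapsto\levisubgroup{I}\), and then builds a splitting \(\Psi\) in the opposite direction (using Hopf's maximal-rank theorem, monogenicity of tori, and transitivity of \(\Weyl{}\) on fundamental systems) with \(\Psi\circ\Phi=\id\) again cited to the thesis. Your surjectivity step is essentially the same mechanism as the paper's \(\Psi\) (a generic element of \(\mathrm{Lie}(S)\) playing the role of the monogenic generator, moved into the dominant chamber), but your injectivity step is genuinely different: instead of constructing a splitting, you conjugate the conjugating element into \(N_G(T)\) via conjugacy of maximal tori inside the connected group \(\levisubgroup{I'}\) and compare root systems. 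What this buys is a complete proof from three standard black boxes (connectedness of torus centralizers, determination of a closed connected subgroup containing \(T\) by its set of roots, conjugacy of maximal tori), at the cost of re-proving what the paper delegates to \cite[Prop.~1.8, Remark~1.10]{hemmert:thesis}.

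Two small points you should tighten, neither fatal. First, you treat ``\(\Weyl{}\)-equivalence of \(I\) and \(I'\)'' as synonymous with ``\(w(\roots{}\cap\ZZ I)=\roots{}\cap\ZZ I'\) for some \(w\)''; the intended meaning is \(wI=I'\) for some \(w\in\Weyl{}\), and passing from the subsystem statement back to this (which your injectivity step needs) requires the standard observation that \(wI\) is a base of \(\roots{}\cap\ZZ I'\) and that \(\Weyl{I'}\) acts transitively on bases of that sub-root system, so some \(u\in\Weyl{I'}\) gives \(uwI=I'\). Second, your generic element \(x\) lives in \(\mathrm{Lie}(T)\), i.e.\ on the coweight side, so the chamber you move \(wx\) into is the dual chamber \(\{y : \alpha(y)\geq 0 \text{ for all }\alpha\in\simpleroots{}\}\) in \(\coweightspaceRR\), not \(\closedWeylchamberZZ{}\subset\weightspaceZZ\) as written; the vanishing-roots lemma you prove applies verbatim there.
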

Note also that we may always pass to a simply-connected cover of \(G\).  We then find that any complex flag variety has the form \(G/\levisubgroup{I}\) for some semisimple simply-connected compact Lie group \(G\), and some \(I\subset \simpleroots{}\).   Moreover, \(G/\levisubgroup{I}\) and \(G/\levisubgroup{J}\) are \(G\)-equivariantly diffeomorphic (as manifolds) if and only if the subsets \(I,J\subset \simpleroots{}\) are \(\Weyl{}\)-equivalent.

\begin{proof}%
  \newcommand{\rootss}[1]{\roots{}(#1)}
  The explicit definition of \(\levisubgroup{I}\) shows that \(\levisubgroup{I}\) and \(\levisubgroup{J}\) are conjugate whenever the subsets \(I\) and \(J\) are \(\Weyl{}\)-equivalent.  It is verified in \cite[Prop.~1.8]{hemmert:thesis} that the resulting map \(\phi\colon I\mapsto \levisubgroup{I}\) is surjective.  Moreover, \(\phi\) is split injective:  Recall that, by a theorem of Hopf, any centralizer \(\levisubgroup{}\) of a torus in \(G\) has maximal rank.  Thus, up to conjugation, we may assume that \(T\subset \levisubgroup{}\).  Then the root system \(\rootss{\levisubgroup{},T}\) is a closed sub-root system of the root system \(\rootss{G,T}\). (Here, we already use some notation from \cref{sec:notation} below.)   As any torus is monogenic, we may identify \(\levisubgroup{}\) with the centralizer of a one-parameter subgroup \(\gamma\colon \RR \to T\).  Viewing \(\gamma\) as an element of vector space of coweights \(\coweightspaceRR\), we may describe the root system of \((\levisubgroup{},T)\) explicitly as follows:
  \[
    \rootss{\levisubgroup{},T} = \{ \alpha \in \rootss{G,T} \mid \pairing{\gamma}{\alpha} = 0 \}
  \]
  It follows from this description that \(\rootss{\levisubgroup{},T}\) is the span of a subset \(I'\) of some fundamental system \(\simpleroots{}'\) of the root system of \((G,T)\).  As the Weyl group \(\Weyl{}\) acts transitively on fundamental systems, there is some \(w\in\Weyl{}\) such that \(w.\simpleroots{}' = \simpleroots{}\).  Define \(\Psi(\levisubgroup{}) := w.I' \subset \simpleroots{}\).   This defines a map \(\Psi\) in the opposite direction, such that \(\Psi\circ\Phi = \id\) \cite[Remark~1.10]{hemmert:thesis}.
\end{proof}

\subsection{Notation}\label{sec:notation}
We make the standing assumption that \(G\) is a simply-connected connected compact Lie group, that \(T\subset G\) is a maximal torus and that \(T\subset\levisubgroup{H}\subset G\), with \(\levisubgroup{H}\) as specified in \cref{prop:classification}.  The following  notation will be used throughout the paper; some of it has been introduced already.  We mostly follow \cite[\S\,2]{vogan:challenges}.
We write
\begin{align*}
  \weightspaceZZ &:= \{\text{ continuous group homomorphisms } T \to S^1 \;\}\\
  \coweightspaceZZ &:= \{\text{ continuous group homomorphisms } S^1 \to T\;\}\\
  \pairing{-}{-}&\colon \coweightspaceZZ\times\weightspaceZZ \to \ZZ
\end{align*}
for the lattice of characters\slash weights of \(T\), for the lattice of coweights, and for the canonical pairing under which these two lattices are mutually dual.  We denote by
\(\Weyl{}\) the Weyl group of \((G,T)\), by \(\roots{}\subset\weightspaceZZ\) the set of roots and by \(\coroots{}\subset\coweightspaceZZ\) the set of coroots of \((G,T)\).  The corresponding data for \((\levisubgroup{H},T)\) is decorated with a subscript, so that altogether we have root data
\((\weightspaceZZ,\roots{},\coweightspaceZZ,\coroots{})\) and
\((\weightspaceZZ,\roots{H},\coweightspaceZZ,\coroots{H})\)
associated with \((G,T)\) and \((\levisubgroup{H},T)\), respectively.  As is customary, we write \(\alpha\mapsto \alpha^\vee\) for the bijection \(\roots{}\cong\coroots{}\) that is implicit in this data.

Next, recall that we have fixed sets of simple roots
\begin{align*}
  \simpleroots{}&\subset \roots{}  &   \simpleroots{H} &= \simpleroots{} \cap \roots{H}
\end{align*}
for \((G,T)\) and \((\levisubgroup{H},T)\).  They determine closed Weyl chambers
\begin{align*}
  \closedWeylchamberZZ{} &:= \{ \tau\in\weightspaceZZ \mid \pairing{\alpha^\vee}{\tau} \geq 0 \text{ for all } \alpha\in\simpleroots{}\}\\
  \closedWeylchamberZZ{H} &:= \{ \tau\in\weightspaceZZ \mid \pairing{\alpha^\vee}{\tau} \geq 0 \text{ for all } \alpha\in\simpleroots{H}\}
\end{align*}
The weights in \(\closedWeylchamberZZ{}\) are the dominant weights with respect to our choice of simple roots \(\simpleroots{}\).  As \(G\) is simply-connected, they form a free abelian monoid generated by certain fundamental weights \(\omega_\alpha\in\weightspaceZZ\) indexed by the simple roots \(\alpha\in\simpleroots{}\).  These fundamental weights are uniquely defined by the requirement that \(\pairing{\alpha^\vee}{\omega_\alpha} = 1\) and \(\pairing{\alpha^\vee}{\omega_\beta} = 0\) for all \(\alpha \neq \beta\) in \(\simpleroots{}\).  There are partial orders \(\lessdominant{}\) and \(\lessdominant{H}\) on \(\closedWeylchamberZZ{}\) and \(\closedWeylchamberZZ{H}\) that can be described either in terms of convex hulls of Weyl orbits or in terms of the coefficients of weights when written as (\(\RR\)-)linear combinations of simple roots \cite[Chapter\,6, Proposition~2.4]{btd}.

Our choices of simple roots \(\simpleroots{}\) and \(\simpleroots{H}\) also determine distinguished sets of generators of the Weyl groups \(\Weyl{}\) and \(\Weyl{H}\) as Coxeter groups.  We write \(\longest{}\in\Weyl{}\) and \(\longest{H}\in\Weyl{H}\) for the longest elements with respect to these generators.  Viewing the Weyl groups as groups of automorphisms of \(\weightspaceZZ\), we define the involutions
\begin{align*}
  \dual{} &:= -\longest{}  & \dual{H} &:= -\longest{H}
\end{align*}
The involution \(\dual{}\) permutes the simple roots \(\simpleroots{}\).  The involution \(\dual{H}\) permutes the simple roots in \(\simpleroots{H}\), and acts as \(-1\) on the complement
\(
  \{\tau \in\weightspaceZZ \mid \pairing{\alpha^\vee}{\tau} = 0 \text{ for all } \alpha \in H\}
\).
As already pointed out in \cref{rem:trivial-involution}, for most types of \(G\), the \(\dual{} = \id\). In general, however, \(\dual{}\) is not even an element of the Weyl group. Indeed, the following three conditions are equivalent \cite[\S\,27-2]{kane}:
\begin{equation*}
  \dual{} = \id, \quad
  -\id \in \Weyl{}, \quad
  \dual{} \in \Weyl{}
\end{equation*}
We will occasionally pass from lattices to vector spaces, and from submonoids to real cones:
\begin{align*}
  \weightspaceRR &:= \weightspaceZZ\otimes_\ZZ\RR\\
  \closedWeylchamberRR{} &:= \closedWeylchamberZZ{}\otimes_\ZZ\RR\\
                 &\;= \{ \tau \in \weightspaceRR \mid \pairing{\alpha^\vee}{\tau} \geq 0 \text{ for all } \alpha\in\simpleroots{}\}\\
  \closedWeylchamberRR{H} &:= \closedWeylchamberZZ{H}\otimes_\ZZ\RR
\end{align*}
The vector space \(\weightspaceRR\) can be identified with the dual tangent space of \(T\). Under this identification, \(\weightspaceZZ\) corresponds to the integral lattice, denoted \(I^*\) in classical sources such as \cite{btd} or \cite{adams}.

The roots \(\roots{}\subset \weightspaceRR\) are the usual root system associated with \((G,T)\)
with respect to a \(\Weyl{}\)-invariant inner product on \(\weightspaceRR\), which we denote \(\innerproduct{-}{-}\).
Much of the literature on compact Lie groups is formulated in terms of root systems rather than root data, and we will freely pass between these two points of view when quoting results from different sources.  The main equality to keep in mind is that
  \begin{equation}\label{eq:innerproduct-versus-pairing}
    \pairing{\alpha^\vee}{\omega} = \frac{2\innerproduct{\alpha}{\omega}}{\innerproduct{\alpha}{\alpha}}
  \end{equation}
for any root \(\alpha\in\roots{}\) and any \(\omega\in \weightspaceRR\).  The roots \(\roots{H}\subset \weightspaceRR\) form a sub-root system. (We adhere to the convention that a root system need not generate the full ambient vector space.)

Some more notation regarding subcones\slash cells in \(\weightspaceRR\) is introduced at the beginning of \cref{sec:connected-singlecell}.  Some of the notation introduced above with respect to a fixed subset \(\simpleroots{H}\subset \simpleroots{}\) is freely extended to other subsets.

\subsection{Witt rings as Tate cohomology rings}
\label{sec:setup}
There is a close link between the Witt ring of \(G/\levisubgroup{H}\) and the representation theory of \(G\) and \(\levisubgroup{H}\), summarized in \cref{eq:main-iso-1} below.  We briefly explain where it comes from.  First, a lemma of Bousfield identifies the Witt ring of \(G/\levisubgroup{H}\) with the Tate cohomology ring of the complex K-ring \(\K^0(G/\levisubgroup{H})\), equipped with the usual involution \(\circ\) that sends a vector bundle to the dual bundle:
\begin{equation}\label{eq:Bousfield}
  \Witt^*(G/\levisubgroup{H}) \xrightarrow{\;\cong\;} \h^*(\K^0(G/\levisubgroup{H}),\circ)
\end{equation}
\cite[\S\,1.2]{zibrowius:koff}.   Here, the Tate cohomology \(\h^*(R,\circ)\) of a ring with involution \((R,\circ)\) is the \(\ZZII\)-graded ring with components \(\h^\pm (R,\circ) := {\ker(\id \mp \circ)}/{\im(\id\pm \circ)}\).   The isomorphism \eqref{eq:Bousfield} is induced by the complexification of vector bundles.  It respects the grading in the sense that elements of even degrees get sent to \(\h^+(\K^0(G/\levisubgroup{H}))\), and elements of odd degrees get sent to \(\h^-(\K^0(G/\levisubgroup{H}))\).

Second, Hodgkin's Theorem describes the complex K-theory \(\K^0(G/\levisubgroup{H})\) in terms of the complex representation rings \(\R G\) and \(\R(\levisubgroup{H})\):
\begin{equation}\label{eq:Hodgkin}
  \K^0(G/\levisubgroup{H}) \xleftarrow[\;\cong\;]{\alphaU} \R(\levisubgroup{H})\otimes_{\R G}\ZZ
\end{equation}
\cite[\S\,2.3]{zibrowius:koff}\cite[Lemma~9.2]{hodgkin1}.
Here, \(\R(\levisubgroup{H})\) is viewed as an \(\R G\)-module via the restriction \(\R G\to\R(\levisubgroup{H})\), and \(\ZZ\) is viewed as an \(\R G\)-module via the rank map.  The map \(\alphaU\) sends a complex \(\levisubgroup{H}\)-representation \(V\) to the associated homogeneous vector bundle \(V\times_{\levisubgroup{H}} G\) over \(G/\levisubgroup{H}\).\footnote{
  We use the notation \(\alphaU\) instead of the notation \(\alpha\) from \cite{zibrowius:koff} to alleviate the clash with our use of \(\alpha\) to denote simple roots.
}

The representation ring \(\R G\) is a polynomial ring on generators indexed by the simple roots \(\simpleroots{}\).  There is some choice regarding the specific generators.  A common choice is to use the irreducible representations \(\fundamentalRepresentation{\alpha}\) with highest weights the fundamental weights \(\omega_\alpha\).  We may equally well use the symmetric sums \(\symsum{}(\omega_{\alpha})\) or their reduced variants \(\rsymsum{}(\omega_\alpha) := \symsum{}(\omega_\alpha)-\rank(\symsum{}(\omega_\alpha))\).
Using this notation, we can rewrite the right side of  \eqref{eq:Hodgkin} as the quotient \(\R(\levisubgroup{H})/(\rsymsum{}(\omega_\alpha)\mid\alpha\in\simpleroots{})\).  In summary, then, we have an isomorphism
\begin{equation}\label{eq:main-iso-1}
  \Witt^*(G/\levisubgroup{H}) \cong \h^*\left(\R(\levisubgroup{H})/(\rsymsum{}(\omega_\alpha) \mid \alpha \in\simpleroots{})\right).
\end{equation}
If all fundamental weights of \(G\) are self-dual, then this is precisely the isomorphism we need.  For every pair of non-self-dual fundamental weights \(\omega_\beta\), \(\omega_{\dual{}\beta}\), we could easily replace \(\rsymsum{}(\omega_\beta)\) and \(\rsymsum{}(\omega_{\dual{} \beta})\) in \eqref{eq:main-iso-1} by the single element \(\rsymsum{}(\omega_\beta)\cdot \rsymsum{}(\omega_{\dual{} \beta})\), by repeatedly applying \cite[Lemma~2.7]{hemmert:thesis}.   However, for a streamlined proof of \cref{thm:main-degrees}, it will be far more convenient to work with \(\rsymsum{}(\omega_\beta + \omega_{\dual{} \beta})\):
\begin{equation}\label{eq:main-iso-2}
  \Witt^*(G/\levisubgroup{H}) \cong \h^*\left(
    \frac{\R(\levisubgroup{H})}{
      \left(
        \substack{
          \rsymsum{}(\omega_\alpha)\\
          \rsymsum{}(\omega_\beta + \omega_{\dual{}\beta})
        }
        \;\middle\vert\;
        \substack{
          \alpha \in\simpleroots{} \text{ such that } \dual{}\alpha=\alpha\\
          \beta \in\simpleroots{} \text{ such that} \dual{}\beta\neq \beta
        }
      \right)
    }
  \right).
\end{equation}
Unfortunately, the passage from \eqref{eq:main-iso-1} to \eqref{eq:main-iso-2} is technical.  We have relegated this somewhat unsatisfying step to \cref{sec:simplify-setup}.

In any case, it is the identification \eqref{eq:main-iso-2} that our results are built on, and which motivates the analysis of root systems that follows.  As a first step towards calculating the Tate cohomology on the right side of \eqref{eq:main-iso-2}, we analyse the Tate cohomology of \(\R(\levisubgroup{H})\).  Recall that a \(\ZZ\)-basis of \(\R(\levisubgroup{H})\) is given by symmetric sums \(\symsum{H}(\omega)\) of the dominant weights \(\omega \in \closedWeylchamberRR{H}\cap\weightspaceZZ\) \cite[Theorem~6.20]{adams}, and that the product of two such symmetric sums has the form
\begin{equation}\label{eq:symsum{}product}
  \symsum{H}(\omega_1)\symsum{H}(\omega_2)
  = \symsum{H}(\omega_1+\omega_2) +  \underset{(\omega\in \closedWeylchamberRR{H}\cap \weightspaceZZ\colon \omega \lessdominant{H} \omega_1+\omega_2)}{\text{smaller terms \(\symsum{H}(\omega)\)}}
\end{equation}
\cite[Proposition~6.36]{adams}, where ``smaller terms'' refers to some linear combination of the indicated elements. Similarly, we find:
\begin{prop}\label{prop:Tate-of-H}
A \(\ZZII\)-basis of \(\h^+(\R(\levisubgroup{H}))\) is given by the classes \([\symsum{H}(\tau)]\) corresponding to the weights \(\tau\in\closedWeylchamberRR{H}^\dual{H}\cap\weightspaceZZ\). The product of two such classes has the form
\begin{equation}\label{eq:symsum{}productinTate}
  [\symsum{H}(\tau_1)][\symsum{H}(\tau_2)]
  = [\symsum{H}(\tau_1+\tau_2)] +  \underset{(\tau\in \closedWeylchamberRR{H}^\dual{H}\cap \weightspaceZZ\colon \tau \lessdominant{H} \tau_1+\tau_2)}{\text{smaller terms \([\symsum{H}(\tau)]\)}}
\end{equation}
in \(\h^+(\R(\levisubgroup{H}))\).  The negative Tate cohomology \(\h^-(\R(\levisubgroup{H}))\) vanishes.
\end{prop}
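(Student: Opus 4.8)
The plan is to observe that, under the identifications $\R(\levisubgroup{H})\cong(\R T)^{\Weyl{H}}$ and $\R T\cong\ZZ[\weightspaceZZ]$, the dualization involution $\circ$ merely permutes the standard $\ZZ$-basis of $\R(\levisubgroup{H})$, and then to read everything off from the Tate cohomology of a permutation module. For the first point: $\circ$ negates weights, so it carries the symmetric sum $\symsum{H}(\omega)$ over the $\Weyl{H}$-orbit of a dominant weight $\omega$ to the symmetric sum over the $\Weyl{H}$-orbit of $-\omega$; since $-\omega$ and $-\longest{H}\omega$ lie in the same $\Weyl{H}$-orbit and $\dual{H}\omega=-\longest{H}\omega$ is its dominant representative, this symmetric sum is $\symsum{H}(\dual{H}\omega)$. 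Hence $\circ$ acts on the basis $\{\symsum{H}(\omega):\omega\in\closedWeylchamberRR{H}\cap\weightspaceZZ\}$ by the permutation $\omega\mapsto\dual{H}\omega$, and — as $\dual{H}$ preserves $\closedWeylchamberZZ{H}$ — its fixed basis vectors are exactly the $\symsum{H}(\tau)$ with $\tau\in\closedWeylchamberZZ{H}^\dual{H}=\closedWeylchamberRR{H}^\dual{H}\cap\weightspaceZZ$. This is the structural reason that only the $\dual{H}$-fixed symmetric sums will appear in $\h^+$.

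With this in hand, the additive statements are the standard computation of the Tate cohomology of a permutation module. I would decompose $\R(\levisubgroup{H})$ into $\circ$-stable summands: a copy of $\ZZ$ with trivial involution spanned by $\symsum{H}(\tau)$ for each $\dual{H}$-fixed dominant weight $\tau$, and a copy of $\ZZ\oplus\ZZ$ with the coordinate-swap involution spanned by $\symsum{H}(\omega),\symsum{H}(\dual{H}\omega)$ for each two-element orbit $\{\omega,\dual{H}\omega\}$. A direct check gives $\h^+(\ZZ,\id)=\ZZII$, $\h^-(\ZZ,\id)=0$, and $\h^\pm(\ZZ\oplus\ZZ,\mathrm{swap})=0$; since $\h^\pm$ is additive, $\h^-(\R(\levisubgroup{H}))=0$, while $\{[\symsum{H}(\tau)]:\tau\in\closedWeylchamberZZ{H}^\dual{H}\}$ is a $\ZZII$-basis of $\h^+(\R(\levisubgroup{H}))$. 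In particular every class in $\h^+$ is killed by $2$, so $\h^+$ is an $\FF_2$-algebra.

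For the multiplicative formula, let $\tau_1,\tau_2\in\closedWeylchamberZZ{H}^\dual{H}$; then $\tau_1+\tau_2$ is again dominant and $\dual{H}$-fixed, and \eqref{eq:symsum{}product} gives $\symsum{H}(\tau_1)\symsum{H}(\tau_2)=\symsum{H}(\tau_1+\tau_2)+\sum_\omega c_\omega\symsum{H}(\omega)$ with every $\omega\lessdominant{H}\tau_1+\tau_2$. As the left-hand side is $\circ$-fixed and $\circ$ permutes the basis by $\omega\mapsto\dual{H}\omega$, we get $c_\omega=c_{\dual{H}\omega}$; separating the tail into $\dual{H}$-fixed terms and pairs $c_\omega\bigl(\symsum{H}(\omega)+\symsum{H}(\dual{H}\omega)\bigr)$, each pair equals $(\id+\circ)\symsum{H}(\omega)\in\im(\id+\circ)$ and so vanishes in $\h^+$, whereas the $\dual{H}$-fixed terms $\symsum{H}(\tau)$, $\tau\lessdominant{H}\tau_1+\tau_2$, survive with their coefficients reduced mod $2$. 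This is precisely the asserted product formula. I do not expect a genuine obstacle anywhere; the one step that needs care is this last bit of bookkeeping in $\h^+$ — recognising that the non-$\dual{H}$-fixed lower terms of the product pair off into boundaries, leaving only the leading class $[\symsum{H}(\tau_1+\tau_2)]$ together with $\dual{H}$-fixed corrections that are strictly smaller in the order $\lessdominant{H}$.
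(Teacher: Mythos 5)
Your proposal is correct and follows essentially the same route as the paper: the key observation in both is that dualization sends \(\symsum{H}(\omega)\) to \(\symsum{H}(\dual{H}\omega)\), so the involution permutes the symmetric-sum basis and the additive claims follow from the standard Tate cohomology of a permutation module, while the product formula comes from \eqref{eq:symsum{}product} together with the fact that the non-\(\dual{H}\)-fixed lower terms pair off into \(\im(\id+\circ)\) and vanish. Your write-up merely makes explicit (via the decomposition into trivial and swap summands, and the coefficient symmetry \(c_\omega=c_{\dual{H}\omega}\)) what the paper leaves implicit.
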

\begin{proof}
  As \(\dual{H}\symsum{H}(\omega) = \symsum{H}(\dual{H}\omega)\), the duality \(\dual{H}\) permutes the given basis of \(\R H\), fixing \(\symsum{H}(\omega)\)  if and only if it fixes \(\omega\).  This proves the additive claims.  The multiplicative formula is immediate from \eqref{eq:symsum{}product} and the observation that smaller terms \(\symsum{H}(\tau)\) with \(\dual{H}\tau\neq \tau\) necessarily occur in (dual) pairs, and hence disappear in Tate cohomology.
\end{proof}
\begin{cor}\label{lemmatatecohfreemonoid}
  If the commutative monoid \(\closedWeylchamberRR{H}^\dual{H} \cap \weightspaceZZ\) is free with basis $\tau_1,\ldots,\tau_a$, then \(\h^*(\R(\levisubgroup{H}))\) is freely generated, as a \(\ZZII\)-graded \(\ZZII\)-algebra, by the Tate cohomology classes of the symmetric sums \([\symsum{H}(\tau_i)]\in\h^+(\R(\levisubgroup{H}))\).

  More generally, under the same assumptions, \(\h^*(\R(\levisubgroup{H}))\) is freely generated by the Tate cohomology classes \([\chi_1],\dots,[\chi_a]\) of any family of \(\dual{H}\)-self-dual elements \(\chi_1,\dots,\chi_a\) in \(\R(\levisubgroup{H})\) whose classes \(\h^+(\R(\levisubgroup{H}))\) have the form
  \[
    [\chi_i] = [\symsum{H}(\tau_i)] + \underset{(\omega\in \closedWeylchamberRR{H}^\dual{H}\cap \weightspaceZZ\colon \omega \lessdominant{H} \tau_i)}{\text{ smaller terms \([\symsum{H}(\omega)]\)}}.
  \]
\end{cor}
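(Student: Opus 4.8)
The plan is to identify \(\h^*(\R(\levisubgroup{H}))\) with an explicit polynomial ring. Observe first that the two assertions have the same shape: the basic statement is the special case \(\chi_i = \symsum{H}(\tau_i)\) of the ``more general'' one, for which the ``smaller terms'' are empty. Moreover, since \(\h^-(\R(\levisubgroup{H})) = 0\) by \cref{prop:Tate-of-H} and the classes \([\chi_i]\) lie in the commutative ring \(\h^+(\R(\levisubgroup{H}))\), being ``freely generated as a \(\ZZII\)-graded \(\ZZII\)-algebra'' on these degree-\(+\) generators just means being a polynomial \(\ZZII\)-algebra on them. So the content is that the \(\ZZII\)-algebra homomorphism
\[
  \psi\colon \ZZII[x_1,\dots,x_a]\longrightarrow \h^*(\R(\levisubgroup{H})),\qquad x_i\mapsto[\chi_i],
\]
is an isomorphism.

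The crucial input is a unitriangularity statement for \(\psi\) with respect to the partial order \(\lessdominant{H}\): for every \((n_1,\dots,n_a)\in\NN^a\), setting \(\tau := \sum_i n_i\tau_i\),
\[
  \psi\Bigl(\textstyle\prod_i x_i^{n_i}\Bigr)=\prod_i[\chi_i]^{n_i}=[\symsum{H}(\tau)]+(\text{smaller terms}),
\]
where ``smaller terms'' denotes a \(\ZZII\)-linear combination of classes \([\symsum{H}(\omega)]\) with \(\omega\lessdominant{H}\tau\) and \(\omega\in\closedWeylchamberRR{H}^\dual{H}\cap\weightspaceZZ\). I would prove this by induction on \(\sum_i n_i\), multiplying by one factor \([\chi_i] = [\symsum{H}(\tau_i)] + (\text{smaller terms})\) at a time and using the product formula for Tate cohomology classes from \cref{prop:Tate-of-H}: if \(u = [\symsum{H}(\mu)] + (\text{smaller terms})\) and \(v = [\symsum{H}(\nu)] + (\text{smaller terms})\) with \(\mu,\nu\) in the \(\dual{H}\)-fixed cone, then expanding \(uv\) and applying that formula to each piece shows that the leading term of \(uv\) is \([\symsum{H}(\mu+\nu)]\) with coefficient \(1\), all other contributions being classes \([\symsum{H}(\lambda)]\) with \(\lambda\lessdominant{H}\mu+\nu\). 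Here one uses that \(\lessdominant{H}\) is preserved under adding a fixed dominant weight, and that by \cref{prop:Tate-of-H} all terms that occur automatically lie in the \(\dual{H}\)-fixed cone.

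Granting the unitriangularity statement, both halves of the bijectivity of \(\psi\) follow by a leading-term argument, which is legitimate because \(\{\omega\in\closedWeylchamberRR{H}^\dual{H}\cap\weightspaceZZ : \omega\lessdominant{H}\tau\}\) is finite for each \(\tau\), so \(\lessdominant{H}\) is well-founded on the monoid. \emph{Surjectivity}: by \cref{prop:Tate-of-H} the classes \([\symsum{H}(\tau)]\) with \(\tau\in\closedWeylchamberRR{H}^\dual{H}\cap\weightspaceZZ\) span \(\h^*(\R(\levisubgroup{H}))\); writing such a \(\tau\) uniquely as \(\sum_i n_i\tau_i\) by freeness of the monoid and inducting downward along \(\lessdominant{H}\), the unitriangularity statement shows \([\symsum{H}(\tau)]\in\im\psi\). \emph{Injectivity}: given \(0 \neq p = \sum_M c_M M\) with \(c_M\in\ZZII\) and \(M\) ranging over monomials in \(x_1,\dots,x_a\), the finitely many \(M\) with \(c_M\neq 0\) correspond to pairwise distinct elements \(\tau_M := \sum_i m_i\tau_i\) (for \(M = \prod_i x_i^{m_i}\)) of the free monoid; choosing \(\tau_M\) maximal among these, the coefficient of \([\symsum{H}(\tau_M)]\) in \(\psi(p)\) equals \(c_M\neq 0\), since \(M\) itself contributes \(c_M\) and no other monomial \(M'\) can contribute -- its leading weight \(\tau_{M'}\) is \(\neq\tau_M\) and its smaller terms are all \(\lessdominant{H}\tau_{M'}\), so they cannot reach the maximal weight \(\tau_M\). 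Hence \(\psi(p)\neq 0\).

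I expect no serious obstacle here. The only points needing care are the bookkeeping facts that \(\lessdominant{H}\) is well-founded and compatible with addition of dominant weights on \(\closedWeylchamberRR{H}^\dual{H}\cap\weightspaceZZ\), and -- the decisive point -- that freeness of this monoid forces distinct monomials to have distinct leading weights, so that a maximal leading term can never be cancelled. Once this is in place, the whole argument is purely formal.
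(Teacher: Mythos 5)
Your proposal is correct and is essentially the paper's own argument: both define the homomorphism \(\ZZII[x_1,\dots,x_a]\to\h^*(\R(\levisubgroup{H}))\), \(x_i\mapsto[\chi_i]\), and show via the product formula \eqref{eq:symsum{}productinTate} that the image of each monomial equals \([\symsum{H}(\tau)]\) plus terms \([\symsum{H}(\omega)]\) with \(\omega\lessdominant{H}\tau\), so that by freeness of the monoid and finiteness of the set of smaller dominant weights these images form a basis of \(\h^+(\R(\levisubgroup{H}))\). Your explicit split into surjectivity and injectivity is just a slightly more detailed phrasing of the paper's ``sends a basis to a basis'' induction.
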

\begin{proof}
  Consider a polynomial ring \(\ZZII[x_1,\dots,x_a]\) in \(a\) variables.
  It suffices to show that the ring homomorphism \(\varphi\colon \ZZII[x_1,\dots,x_a] \to \h^*(\R(\levisubgroup{H}))\) defined by
  \(\varphi(x_i) := [\chi_i]\) sends a basis to a basis.  In particular, it suffices to verify that the elements \(f_\tau:= \varphi(x_1^{n_1}\dots x_a^{n_a})\) with \(n_1,\dots,n_a\in\NN_0\), \(\tau := \sum_{i=1}^{a} n_i \tau_i \in \closedWeylchamberRR{H}^\dual{H}\cap\weightspaceZZ\), form a basis of \(\h^+(\R(\levisubgroup{H}))\).  Writing the basis elements of \cref{prop:Tate-of-H} as \(e_\tau := \left[\symsum{H}(\tau)\right]\), and using \eqref{eq:symsum{}productinTate}, we find that
  \(f_\tau = e_\tau + \text{ smaller terms \(e_\omega\).}\)
  As there are only finitely many dominant weights \(\omega\lessdominant{H} \tau\) for any fixed \(\tau\), this shows, by induction, that the \(f_\tau\) indeed form a basis.
\end{proof}
\Cref{prop:Tate-of-H,lemmatatecohfreemonoid} clearly indicate that we should study the monoid \(\closedWeylchamberRR{H}^\dual{H}\cap \weightspaceZZ\).

\subsection{A technical lemma}
\label{sec:simplify-setup}
As noted,  for a pair of non-self-dual fundamental weights \((\omega_\beta, \omega_{\dual{}\beta})\), we need to divide out both \(\rsymsum{}(\omega)\) and \(\rsymsum{}(\dual{}\omega)\) on the right side of \eqref{eq:main-iso-1}.  We will now show that we can divide out \(\rsymsum{}(\omega+\dual{}\omega)\) instead:

\begin{prop}
  \label{prop:simplified-quotient}
  There is canonical projection
  \begin{equation*}
    \frac{\R(\levisubgroup{H})}{
      \left(
        \substack{
          \rsymsum{}(\omega_\alpha),\\
          \rsymsum{}(\omega_\beta + \omega_{\dual{}\beta})\\
        }
        \;\middle\vert\;
        \substack{
          \alpha\in\simpleroots{} \text{ such that } \dual{}\alpha = \alpha, \\
          \beta \in\simpleroots{} \text{ such that } \dual{}\beta \neq \beta, \\
        }
      \right)}
    \twoheadrightarrow
    \frac{\R(\levisubgroup{H})}{\left(\rsymsum{}(\omega_\alpha) \;\middle\vert\; \alpha\in\simpleroots{}\right)}
  \end{equation*}
  that induces an isomorphism in Tate cohomology.
\end{prop}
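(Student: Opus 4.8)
The plan is to exhibit the map and then analyze its kernel directly. First I would observe that the ring $\R(\levisubgroup{H})$ contains $\rsymsum{}(\omega_\alpha + \omega_{\dual{}\alpha})$ in the ideal generated by $\rsymsum{}(\omega_\alpha)$ and $\rsymsum{}(\omega_{\dual{}\alpha})$: indeed, writing $r_\alpha := \rsymsum{}(\omega_\alpha)$, we have $\symsum{}(\omega_\alpha)\symsum{}(\omega_{\dual{}\alpha}) = \symsum{}(\omega_\alpha + \omega_{\dual{}\alpha}) + (\text{smaller terms})$ by \eqref{eq:symsum{}product}, and a straightforward bookkeeping of ranks shows $\rsymsum{}(\omega_\alpha + \omega_{\dual{}\alpha})$ differs from $r_\alpha r_{\dual{}\alpha} + (\text{rank-zero lower-order symmetric sums})$ by an element of $(r_\alpha, r_{\dual{}\alpha})$; since the "smaller terms" $\symsum{}(\omega)$ with $\omega \lessdominant{} \omega_\alpha + \omega_{\dual{}\alpha}$ lie in the ideal $(r_\alpha, r_{\dual{}\alpha})$ by downward induction on the partial order (using \cite[Lemma~2.7]{hemmert:thesis} as the paper already indicates), we conclude $\rsymsum{}(\omega_\alpha + \omega_{\dual{}\alpha}) \in (r_\alpha, r_{\dual{}\alpha})$ in $\R(\levisubgroup{H})$. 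Therefore the ideal on the left of the claimed projection is contained in the ideal on the right, and the canonical surjection exists.

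Next I would set up the Tate-cohomology comparison. Both sides carry the involution induced by $\dual{H}$ (note $\dual{H}$ restricted to the relevant sublattice is well-defined, and $\dual{H}$ commutes with $\dual{}$ since both come from longest-element data); the projection is $\dual{H}$-equivariant, so it induces a map on $\h^*$. To prove this is an isomorphism, I would use the short exact sequence of $\ZZII[\dual{H}]$-modules
\[
  0 \to J \to \frac{\R(\levisubgroup{H})}{\left(\substack{\rsymsum{}(\omega_\alpha),\ \rsymsum{}(\omega_\beta+\omega_{\dual{}\beta})}\right)} \to \frac{\R(\levisubgroup{H})}{\left(\rsymsum{}(\omega_\alpha)\right)} \to 0,
\]
where $J$ is the ideal generated by the images of the $r_\beta$ for $\dual{}\beta \neq \beta$ (one generator per dual pair). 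The associated long exact sequence in Tate cohomology reduces the claim to showing $\h^*(J) = 0$. The key structural fact is that $J$ is a free module, with $\ZZII$-basis indexed by monomials $\symsum{H}(\cdot)$ times the generators $r_\beta$, and crucially the involution $\dual{H}$ swaps the generator $r_\beta$ with $r_{\dual{}\beta}$ (since $\dual{H}$ and $\dual{}$ agree up to a sign that is absorbed by the reduced symmetric-sum construction, $\dual{H}r_\beta = \pm r_{\dual{}\beta}$, and one checks the sign is $+$ on the relevant classes). Thus $J$ decomposes, as a $\ZZII[\dual{H}]$-module, into a direct sum over unordered dual pairs $\{\beta, \dual{}\beta\}$ of free modules of the form $\ZZII[\dual{H}] \otimes (\text{free }\ZZII\text{-module})$ — that is, a sum of copies of the regular representation $\ZZII[\ZZII]$ — together with possibly a piece where $\beta$ is fixed, but there are none since we only range over $\dual{}\beta \neq \beta$. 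Since $\h^*$ of the regular representation $\ZZII[\ZZII]$ vanishes (it is cohomologically trivial), $\h^*(J) = 0$.

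The main obstacle I anticipate is the careful verification that $J$ really is free as claimed and that $\dual{H}$ acts on a chosen basis by permuting it with exactly the pairing structure asserted — in particular controlling the "smaller terms" so that passing to the quotient ring does not destroy freeness of $J$ or the pairing of the involution on basis elements. This is precisely the "technical" content the paper warns about. Concretely, I would first establish a normal form: every element of the quotient ring on the left is uniquely a $\ZZII$-combination of $\symsum{H}(\tau)$ for $\tau$ ranging over dominant weights of $\levisubgroup{H}$ that are not "reducible" via the relations, and show that multiplication by $r_\beta$ sends such normal forms to normal forms plus strictly lower terms, so that the submodule $J$ inherits a filtered basis; then I would transport the $\dual{H}$-action through this basis, using \cref{prop:Tate-of-H}-style arguments (dual pairs of non-fixed weights cancel in Tate cohomology) to conclude. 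Once freeness and the pairing are in hand, the vanishing $\h^*(J)=0$ and hence the isomorphism follow formally from the long exact sequence.
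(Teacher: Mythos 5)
Your first step contains a false claim: \(\rsymsum{}(\omega_\beta+\omega_{\dual{}\beta})\) does \emph{not} in general lie in the ideal generated by the pair \(\rsymsum{}(\omega_\beta),\rsymsum{}(\omega_{\dual{}\beta})\) alone, because the ``smaller terms'' in the product expansion involve the \emph{other} generators, not just the pair. For \(E_6\) one has \(\symsum{}(\omega_1+\omega_6)=\symsum{}(\omega_1)\symsum{}(\omega_6)-6\symsum{}(\omega_2)-27\), hence
\(\rsymsum{}(\omega_1+\omega_6)=\rsymsum{}(\omega_1)\rsymsum{}(\omega_6)+27\bigl(\rsymsum{}(\omega_1)+\rsymsum{}(\omega_6)\bigr)-6\,\rsymsum{}(\omega_2)\),
which is not contained in \(\bigl(\rsymsum{}(\omega_1),\rsymsum{}(\omega_6)\bigr)\), since \(\R G\) is a polynomial ring on the \(\rsymsum{}(\omega_\alpha)\). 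For the mere existence of the projection this does not matter: every rank-zero element of \(\R G\) lies in the ideal generated by \emph{all} the \(\rsymsum{}(\omega_\alpha)\), which already gives the required containment of ideals. But the example shows that the dual pairs interact with each other and with the self-dual generators, and that interaction is precisely what your main argument suppresses.

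Your reduction of the proposition to \(\h^*(J)=0\) for the kernel \(J\), via the exact hexagon in Tate cohomology, is formally fine, and \(J\) is in the end cohomologically trivial --- but that statement is essentially equivalent to the proposition, and your justification for it is asserted rather than proved. The fact that the involution swaps the two generators of each pair does not make the \emph{ideal} they generate an induced module: \(J\) is closed under multiplication by the whole ring and so contains fixed elements such as the image of \(\rsymsum{}(\omega_\beta)\rsymsum{}(\omega_{\dual{}\beta})\), and exhibiting these as traces \(x+x^\circ\) with \(x\in J\) requires exactly the identity \(\rsymsum{}(\omega_\beta+\omega_{\dual{}\beta})=\rsymsum{}(\omega_\beta)\rsymsum{}(\omega_{\dual{}\beta})+a\bigl(\rsymsum{}(\omega_\beta)+\rsymsum{}(\omega_{\dual{}\beta})\bigr)+\tilde P\) with control over the lower-order term \(\tilde P\) --- i.e.\ \cref{lem:symsum{}s-of-symmetric-pairs}, which itself needs case analyses in types \(A_n\) and \(E_6\) --- together with freeness\slash regular-sequence facts guaranteeing that the quotient by the relations \(\rsymsum{}(\omega_\beta+\omega_{\dual{}\beta})\) (which mix the pairs through \(\tilde P\)) is torsion-free and has the expected size; your claimed decomposition of \(J\) as a direct sum over pairs cannot hold ``pair by pair'' precisely because of this mixing, and your proposed normal form is also off in that the quotient is a \(\ZZ\)-algebra, not a \(\ZZII\)-module with basis the \(\symsum{H}(\tau)\). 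The paper's proof supplies exactly the missing control: it orders the pairs compatibly with the dominance order, passes through a chain of intermediate quotients, and applies \cref{lem:quotient-iso-on-Tate} one pair at a time, verifying the regularity hypotheses via freeness of \(\R(\levisubgroup{H})\) over \(\R G\) and of the intermediate quotients over \(\ZZ[\rsymsum{}(\omega_k),\rsymsum{}(\dual{}\omega_k)]\). Until your filtration sketch actually establishes the induced-module structure of \(J\) --- which would amount to redoing that work --- the decisive step is missing.
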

All \(G\) of any irreducible type apart from \(A_n\) and \(E_6\) have at most one pair of non-\(\dual{}\)-self-dual fundamental weights.  For these types, \cref{prop:simplified-quotient} is either trivial or follows fairly directly from the following \namecref{lem:quotient-iso-on-Tate}:
\begin{lem}
  \label{lem:quotient-iso-on-Tate}
  Consider a ring with involution \((R,\circ)\).  Suppose we are given elements \(\lambda,\gamma\in R\) with the following properties:
  \begin{compactitem}
  \item \((\lambda,\lambda^\circ)\) is a regular sequence in \(R\)
  \item \(\gamma\) is a non-zero divisor in \(R\) with \(\gamma^\circ = \gamma\)
  \item \(\gamma\) lies in the ideal generated by \(\lambda\) and \(\lambda^\circ\)
  \item \(\gamma = \lambda\lambda^\circ\) in \(\h^+(R)\)
  \end{compactitem}
  Then the canonical quotient map \(R/(\gamma) \twoheadrightarrow R/(\lambda,\lambda^\circ)\) induces an isomorphism on Tate cohomology.
\end{lem}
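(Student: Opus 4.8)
The plan is to compare the two Tate cohomology rings by writing down explicit resolutions. First I would recall that for a non-zero divisor $\gamma$ with $\gamma^\circ = \gamma$, the Tate cohomology of $R/(\gamma)$ is computed by the familiar $2$-periodic complex associated with multiplication by $\gamma$ together with the involution; concretely, $\h^\pm(R/(\gamma),\circ)$ fits into a long exact sequence (a Bockstein-type sequence) relating $\h^*(R,\circ)$ and $\h^*(R/(\gamma),\circ)$, because $R \xrightarrow{\gamma} R \to R/(\gamma) \to 0$ is a free resolution over $R$ and $\gamma$ is $\circ$-fixed. Similarly, since $(\lambda, \lambda^\circ)$ is a regular sequence, $R/(\lambda,\lambda^\circ)$ has the Koszul resolution $K_\bullet(\lambda,\lambda^\circ)$ over $R$, which carries a natural involution swapping the two copies of $R$ in degree $1$ (sending the generator dual to $\lambda$ to the one dual to $\lambda^\circ$). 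The first step is therefore to set up both of these resolutions and the induced involutions compatibly with the quotient map $R/(\gamma) \twoheadrightarrow R/(\lambda,\lambda^\circ)$, which exists precisely because of the hypothesis that $\gamma \in (\lambda,\lambda^\circ)$.

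Next I would compute each Tate cohomology. For $R/(\gamma)$: because $\gamma$ is a non-zero divisor, $\mathrm{Tor}^R_i(R/(\gamma), \mathbb{Z}/2\text{-type things})$ is concentrated in degrees $0,1$, and one gets a four-term exact sequence expressing $\h^\pm(R/(\gamma))$ in terms of $\h^\pm(R)$ and the kernel/cokernel of multiplication by $\gamma$ on $\h^\mp(R)$. For $R/(\lambda,\lambda^\circ)$: the key point is that the Koszul complex on a regular sequence of length $2$, equipped with the swap involution on the middle term, has a very clean Tate cohomology — the middle term $R \oplus R$ with the swap involution has trivial Tate cohomology (it is an induced / free module for $\mathbb{Z}/2$), so the spectral sequence or the explicit double complex collapses and one finds that $\h^*(R/(\lambda,\lambda^\circ))$ is computed from $\h^*(R)$ with a single connecting map which is multiplication by the class of $\lambda\lambda^\circ$ in $\h^+(R)$ (the element "$\lambda^\circ \wedge \lambda$" surviving in the Koszul complex maps under the involution to itself up to the relation $\lambda \lambda^\circ$, and in the $\mathbb{Z}/2$-fixed part only the product $\lambda\lambda^\circ$ records the effect). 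This is where the hypothesis $\gamma = \lambda\lambda^\circ$ \emph{in} $\h^+(R)$ enters: it guarantees that the connecting maps in the two long exact sequences agree, so the five lemma applied to the map of long exact sequences induced by the quotient map gives the desired isomorphism on $\h^*$.

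The step I expect to be the main obstacle is the precise identification of the Tate cohomology of the Koszul complex $K_\bullet(\lambda, \lambda^\circ)$ with its swap involution — specifically, pinning down that the relevant connecting homomorphism $\h^{\mp}(R) \to \h^{\pm}(R)$ is exactly multiplication by the class of $\lambda\lambda^\circ$, with the correct sign/parity bookkeeping in the $\mathbb{Z}/2$-grading. One must be careful that the involution on $K_2 = R$ (the top of the Koszul complex) is multiplication by $-1$ or $+1$ depending on conventions, since $e_\lambda \wedge e_{\lambda^\circ} \mapsto e_{\lambda^\circ} \wedge e_\lambda = - e_\lambda \wedge e_{\lambda^\circ}$; over $\mathbb{Z}/2$-coefficients for the Tate grading this sign is invisible, but in tracking how $\gamma$-multiplication versus $\lambda\lambda^\circ$-multiplication appear it matters. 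Once the two long exact sequences
\[
  \cdots \to \h^{\mp}(R) \xrightarrow{\cdot\gamma} \h^{\pm}(R) \to \h^{\pm}(R/(\gamma)) \to \h^{\pm\pm}(R) \xrightarrow{\cdot\gamma} \cdots
\]
and the analogous one for $R/(\lambda,\lambda^\circ)$ with connecting map $\cdot[\lambda\lambda^\circ]$ are in hand, together with the commuting ladder induced by the quotient map and the equality $[\gamma] = [\lambda\lambda^\circ]$ in $\h^+(R)$, the conclusion follows formally from the five lemma. The remaining hypotheses ($\lambda$ a non-zero divisor as part of the regular sequence, $\gamma$ a non-zero divisor) are exactly what is needed to ensure these resolutions are genuine resolutions and the sequences are exact.
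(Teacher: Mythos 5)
The paper itself gives no details here: its ``proof'' of \cref{lem:quotient-iso-on-Tate} is a one-line reference to \cite[Lemma~2.7]{hemmert1}, so your attempt has to be judged on its own merits. The computational ingredients of your plan are sound: the Bockstein-type sequence coming from \(0\to R\xrightarrow{\;\gamma\;}R\to R/(\gamma)\to 0\) (using that \(\gamma\) is a self-dual non-zero divisor), and the observation that in the Koszul resolution of \(R/(\lambda,\lambda^\circ)\) the middle term \(Re_\lambda\oplus Re_{\lambda^\circ}\), with the swap-plus-\(\circ\) involution, is an induced \(\ZZII\)-module and hence Tate-acyclic. The step you flag as the main obstacle is in fact fine: one can check on explicit cocycles that the resulting connecting map is multiplication by the class of \(\lambda\lambda^\circ\) (for \(r\) with \(r^\circ=-r\), the element \(r\lambda\lambda^\circ\) of the ideal lifts to \(r\lambda e_{\lambda^\circ}\), whose norm is the Koszul image of \(r\)), with the sign twist on the top of the Koszul complex accounting exactly for the parity shift.

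The genuine gap is the final five-lemma step. To apply the five lemma you need a \emph{commutative} ladder with specified vertical maps. The natural ladder, coming from the map of short exact sequences \(0\to(\gamma)\to R\to R/(\gamma)\to 0\) and \(0\to(\lambda,\lambda^\circ)\to R\to R/(\lambda,\lambda^\circ)\to 0\), has as its outer vertical maps the maps \(\h^*((\gamma))\to\h^*((\lambda,\lambda^\circ))\) induced by the inclusion of ideals; the five lemma then requires precisely that these be isomorphisms, equivalently that the Tate cohomology of \((\lambda,\lambda^\circ)/(\gamma)\) vanish -- which is essentially the content of the lemma and is nowhere addressed in your plan. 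Your shortcut, ``the connecting maps agree because \([\gamma]=[\lambda\lambda^\circ]\) in \(\h^+(R)\), so the five lemma applies,'' tacitly replaces those vertical maps by the identity of \(\h^*(R)\) via your identifications, and the resulting squares are not automatically commutative: unwinding the identifications, commutativity amounts to \([r\gamma]=[r\lambda\lambda^\circ]\) in the Tate cohomology of the \emph{module} \((\lambda,\lambda^\circ)\) for every cocycle \(r\). The hypothesis only gives \(\gamma-\lambda\lambda^\circ=s+s^\circ\) with \(s\in R\), so that \(r\gamma-r\lambda\lambda^\circ=(1-\circ)(rs^\circ)\) is a coboundary in \(R\); but \(rs^\circ\) need not lie in the ideal, so this is not visibly a coboundary of \((\lambda,\lambda^\circ)\). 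A related technical point you gloss over: an equivariant comparison map between your two resolutions would require writing \(\gamma=a\lambda+a^\circ\lambda^\circ\), which is not possible in general (one cannot average by \(2\); the general self-dual normal form is \(\gamma=a\lambda+a^\circ\lambda^\circ-t\lambda\lambda^\circ\) with \(t\) self-dual). So the real work of the lemma is exactly the compatibility your plan treats as formal, and it needs an argument using all four hypotheses together -- for instance a direct verification that \(\h^*\left((\lambda,\lambda^\circ)/(\gamma)\right)=0\).
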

\begin{proof}
  This can be proved in exactly the same way as \cite[Lemma~2.7]{hemmert1}.
\end{proof}
For \(G\) of type \(A_n\) or \(E_6\), we need the following additional \namecref{lem:symsum{}s-of-symmetric-pairs} to establish \cref{prop:simplified-quotient}:

\begin{lem}\label{lem:symsum{}s-of-symmetric-pairs}
  Suppose \(G\) is a simple simply-connected compact Lie group.
  For any pair of simple roots \((\alpha,\dual{}\alpha)\) with \(\alpha\neq\dual{}\alpha\), the symmetric sum \(\symsum{}(\omega_\alpha + \omega_{\dual{}\alpha})\) can be written as
  \[
    \symsum{}(\omega_\alpha + \omega_{\dual{}\alpha}) = \symsum{}(\omega_\alpha)\symsum{}(\omega_{\dual{}\alpha}) + P,
  \]
  where \(P\) is a polynomial only in the following variables: \(\symsum{}(\omega_\beta)\), where \(\beta\) ranges over those simple roots that are \(\dual{}\)-self-dual and over those non-\(\dual{}\)-self-dual roots that satisfy \(\omega_\beta + \omega_{\dual{}\beta} < \omega_\alpha + \omega_{\dual{}\alpha}\).
\end{lem}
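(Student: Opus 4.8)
The plan is to analyse the single element \(P := \symsum{}(\omega_\alpha + \omega_{\dual{}\alpha}) - \symsum{}(\omega_\alpha)\symsum{}(\omega_{\dual{}\alpha})\in\R G\); write \(\theta := \omega_\alpha + \omega_{\dual{}\alpha}\) for brevity. First I would record two facts about \(\theta\): it is fixed by \(\dual{}\), and it lies in the root lattice \(Q := \ZZ\roots{}\). The latter holds because \(\irreducibleRepresentation{\omega_{\dual{}\alpha}}\cong\irreducibleRepresentation{\omega_\alpha}^*\), so \(\irreducibleRepresentation{\omega_\alpha}\otimes\irreducibleRepresentation{\omega_{\dual{}\alpha}}\) contains the trivial representation and hence has \(0\) among its weights, which forces \(\theta\equiv 0\bmod Q\). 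Applying \eqref{eq:symsum{}product} to the product \(\symsum{}(\omega_\alpha)\symsum{}(\omega_{\dual{}\alpha})\) shows that \(P\) is a \(\ZZ\)-linear combination of orbit sums \(\symsum{}(\omega)\) with \(\omega\) dominant and \(\omega<\theta\) in the dominance order. Moreover \(\R T\cong\ZZ[\weightspaceZZ]\) carries its natural \(\weightspaceZZ/Q\)-grading, which restricts to a grading of \(\R G\) under which each \(\symsum{}(\omega)\) is homogeneous of degree \(\omega\bmod Q\). Both terms of \(P\) are fixed by \(\dual{}\) (in the first term, \(\dual{}\) interchanges the two factors) and are homogeneous of degree \(\theta\equiv 0\); hence \(P\) is \(\dual{}\)-invariant and homogeneous of degree \(0\) for this grading.

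The second ingredient is that \(\R G = \ZZ[\,\symsum{}(\omega_\delta)\mid\delta\in\simpleroots{}\,]\) is a polynomial ring. Repeated use of \eqref{eq:symsum{}product} shows that the transition between its monomial basis and its orbit-sum basis is unitriangular with respect to the dominance order: the monomial \(\prod_\delta\symsum{}(\omega_\delta)^{n_\delta}\) corresponds to the orbit sum \(\symsum{}\bigl(\sum_\delta n_\delta\omega_\delta\bigr)\) up to strictly smaller orbit sums. Inverting, each \(\symsum{}(\omega)\) is a polynomial in the \(\symsum{}(\omega_\delta)\) all of whose monomials \(\prod_\delta\symsum{}(\omega_\delta)^{n_\delta}\) satisfy \(\sum_\delta n_\delta\omega_\delta\le\omega\). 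Combining this with the first paragraph, I would conclude: written as a polynomial in the \(\symsum{}(\omega_\delta)\), the element \(P\) involves only monomials \(\prod_\delta\symsum{}(\omega_\delta)^{n_\delta}\) whose weight \(\nu := \sum_\delta n_\delta\omega_\delta\) satisfies both \(\nu<\theta\) and \(\nu\in Q\).

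It then remains to show that every such monomial lies in the subring named in the lemma. A monomial built only from \(\dual{}\)-self-dual generators \(\symsum{}(\omega_\beta)\) is allowed, so suppose a monomial of \(P\) involves \(\symsum{}(\omega_{\delta_0})\) with \(\dual{}\delta_0\neq\delta_0\); I must produce the inequality \(\omega_{\delta_0}+\omega_{\dual{}\delta_0}<\theta\). Since the exponent of \(\symsum{}(\omega_{\delta_0})\) is at least \(1\), the weight \(\nu-\omega_{\delta_0}\) is again dominant, and it is congruent to \(\omega_{\dual{}\delta_0}\) modulo \(Q\) because \(\nu\in Q\) and \(\dual{}\) acts as \(-1\) on \(\weightspaceZZ/Q\). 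Now I would invoke the standard fact that a minuscule dominant weight is the minimum, in the dominance order, of its coset modulo \(Q\). For \(G\) of type \(A_n\) or \(D_n\) with \(n\) odd, every non-self-dual fundamental weight is minuscule, so \(\omega_{\dual{}\delta_0}\le\nu-\omega_{\delta_0}\) and therefore \(\omega_{\delta_0}+\omega_{\dual{}\delta_0}\le\nu<\theta\), as desired; this proves the lemma for those types (the case \(\dual{}=\id\), where the hypothesis \(\alpha\neq\dual{}\alpha\) is empty, being trivial). For \(G = E_6\) the only non-self-dual fundamental weights that fail to be minuscule are \(\omega_3\) and \(\omega_5\), so only finitely many cases remain, namely \(\delta_0\in\{3,5\}\) with \((\alpha,\dual{}\alpha)\) running over the two non-self-dual pairs \((\omega_1,\omega_6)\) and \((\omega_3,\omega_5)\) of \(E_6\); these I would settle by a direct computation with Weyl orbits (conveniently via the \texttt{WeylGroups} package \cite{M2:WeylGroups}), checking that no orbit sum \(\symsum{}(\omega)\) with \(\omega<\theta\) occurring in \(\symsum{}(\omega_\alpha)\symsum{}(\omega_{\dual{}\alpha})\) expands into monomials involving \(\symsum{}(\omega_3)\) or \(\symsum{}(\omega_5)\).

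I expect this last, \(E_6\)-specific computation to be the only real obstacle. For types \(A_n\) and \(D_{2k+1}\) the whole argument collapses to the minuscule-weight fact, but in \(E_6\) neither \(\omega_3\) nor \(\omega_5\) is minuscule, and there is no structural reason forcing the potentially offending orbit sums — for instance \(\symsum{}(\omega_1+\omega_3)\), which does satisfy \(\omega_1+\omega_3<\theta\) when \(\theta = \omega_3+\omega_5\) — to have vanishing coefficient in the relevant products, or the resulting monomials to cancel in the polynomial basis. Verifying that they do is where the work lies, and it is the one genuinely case-by-case step.
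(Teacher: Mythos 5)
Your structural argument is correct as far as it goes, and it is genuinely different from the paper's proof: the paper argues case by case, doing \(A_n\) by explicit manipulation of monomial symmetric polynomials and induction, and \(E_6\) by inspecting the dominance order below \(\omega_1+\omega_6\) together with an explicit identity.  Your route -- the grading of \(\R G\) by \(\weightspaceZZ\) modulo the root lattice, the unitriangular change of basis between monomials in the \(\symsum{}(\omega_\delta)\) and orbit sums, and the minimality of minuscule weights in their cosets -- handles \(A_n\) and \(D_{2k+1}\) uniformly with no computation, and it even settles the \(E_6\) pair \((\alpha_1,\alpha_6)\) completely: if a monomial of \(P\) involved \(\symsum{}(\omega_3)\), your own coset-minimum argument gives \(\nu\geq\omega_1+\omega_3\), and since \(\omega_3-\omega_6\) is a non-negative combination of simple roots this contradicts \(\nu<\omega_1+\omega_6\).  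So your method reduces the whole lemma to the single pair \((\alpha_3,\alpha_5)\) of \(E_6\), which is tidier than the paper's treatment.

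The gap is precisely the check you defer to the machine: by the paper's own data it will come out negative.  The remark following the lemma gives \(\symsum{}(\omega_3)\symsum{}(\omega_5)=\symsum{}(\omega_3+\omega_5)+4\,\symsum{}(\omega_1+\omega_2+\omega_6)+10\,\symsum{}(\omega_5+\omega_6)+10\,\symsum{}(\omega_1+\omega_3)+\dots\), so \(P\) is minus the sum of the non-leading terms.  Among the weights indexing these terms, only \(\omega_1+\omega_2+\omega_6\) and \(\omega_5+\omega_6\) are \(\geq\omega_5+\omega_6\) (\(\omega_1+\omega_3\) and \(2\omega_2\) are incomparable to it, the rest are strictly smaller), so by your own unitriangularity the coefficient of the monomial \(\symsum{}(\omega_5)\symsum{}(\omega_6)\) in \(P\) equals \(-(4d+10)\) for some integer \(d\); this is \(\equiv 2 \bmod 4\), hence nonzero, and symmetrically for \(\symsum{}(\omega_1)\symsum{}(\omega_3)\).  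Taking the displayed coefficients at face value (they pass the rank count), \(P\) is therefore \emph{not} a polynomial in \(\symsum{}(\omega_1),\symsum{}(\omega_2),\symsum{}(\omega_4),\symsum{}(\omega_6)\) alone, so the cancellation you hope to verify does not occur -- the difficulty lies with the statement in this one case rather than with your reduction (the paper's own one-sentence treatment of \(E_6\) does not visibly cover this pair either).  What does hold, and is all that \cref{prop:simplified-quotient} actually uses via \eqref{eq:vbifosli}, is the weaker assertion that, modulo constants, \(P\) is a symmetric term \(a\left(\rsymsum{}(\omega_3)+\rsymsum{}(\omega_5)\right)\) plus an element of the \emph{ideal} generated by the allowed reduced sums; your bookkeeping delivers exactly this, because the offending monomials occur in the dual pair \(\symsum{}(\omega_5)\symsum{}(\omega_6)\), \(\symsum{}(\omega_1)\symsum{}(\omega_3)\) with equal coefficients and each is divisible by an allowed variable.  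You should either prove that ideal-theoretic version directly, or run your proposed computation and confront the resulting discrepancy with the lemma as stated.
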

\begin{proof}
  When there is at most one pair of non-\(\dual{}\)-self-dual simple roots, the claim is vacuous.  It therefore suffices to examine only the cases when \(\simpleroots{}\) is of type \(E_6\) or of type \(A_n\).

  For \(\simpleroots{}\) of type \(E_6\), we have \(\dual{}\alpha_1 = \alpha_6\) and \(\dual{}\alpha_3 = \alpha_5\), while \(\alpha_2\) and \(\alpha_4\) are \(\dual{}\)-self-dual in the numbering of \cite{bourbaki456}.  For the associated fundamental weights \(\omega_i\), we have \(\omega_1+\omega_6 < \omega_3 + \omega_5\).  Moreover, the only weights smaller than \(\omega_1+\omega_6\) are \(0\), \(\omega_1\) and \(\omega_6\) are the only weights smaller than \(\omega_1 + \omega_6\).  This implies the claim in this case.  (Explicitly, we have
  \(
    \symsum{}(\omega_1+\omega_6)  =  \symsum{}(\omega_1)\symsum{}(\omega_6) - 6 \symsum{}(\omega_2) - 27
  \).)

  For \(\simpleroots{}\) of type \(A_n\), we have \(\dual{}\omega_i = \omega_{n+1-i}\), and \(\omega_1+\dual{}\omega_1 < \omega_2 + \dual{}\omega_2 < \omega_3 + \dual{}\omega_3 < \dots\). In the usual coordinates, the symmetric sums \(\symsum{}(\omega)\) can be viewed as monomial symmetric polynomials.  Explicit computations with these polynomials show that, for \(i<(n+1)/2\),
  \[
    \symsum{}(\omega_i)\symsum{}(\dual{}\omega_i) = \symsum{}(\omega_i + \dual{}\omega_i) + \sum_{j\colon 0 < j < i} a_{i,j} \symsum{}(\omega_j + \dual{}\omega_j) + a_{i,0}
  \]
  for certain (non-negative) coefficients \(a_{i,j}\).  In particular, \(\symsum{}(\omega_1 + \dual{}\omega_1) = \symsum{}(\omega_1)\symsum{}(\dual{}\omega_1) - a_{1,0}\), and the claim follows by induction.
\end{proof}
\begin{rem}
  We are not aware of any type-independent proof of \cref{lem:symsum{}s-of-symmetric-pairs}.  The argument used for \(E_6\), based on the partial ordering of dominant weights, does not readily extend to \(A_n\). For example, for \(A_7\) we have \(\omega_3 < \omega_2 + \dual{}\omega_2\).  On the other hand, the argument used for \(A_n\) does not readily extend to \(E_6\): It is \emph{not} true in general that the product \(\symsum{}(\omega_\alpha)\symsum{}(\omega_{\dual{}\alpha})\) decomposes as a linear combination of symmetric sums \(\symsum{}(\omega)\) over \emph{self-dual} dominant weights \(\omega\).  For \(\simpleroots{}\) of type \(E_6\), we rather find:
  \begin{align*}
    \symsum{}(\omega_3)\symsum{}(\omega_5)
    & = \symsum{}(\omega_3+\omega_5) + 4 \symsum{}(\omega_1+\omega_2+\omega_6)\\
    &\quad + 10 \symsum{}(\omega_5+\omega_6) +  10 \symsum{}(\omega_1+\omega_3)\\
    &\quad + 15 \symsum{}(2\omega_2) + 18 \symsum{}(\omega_4) + 32 \symsum{}(\omega_1+\omega_6)\\
    &\quad + 60 \symsum{}(\omega_2) + 216
  \end{align*}
\end{rem}

\begin{proof}[Proof of \Cref{prop:simplified-quotient}]
  The idea is to repeatedly apply \cref{lem:quotient-iso-on-Tate}, but a careful proof requires a bit of notation.  Enumerate the pairs of non-\(\dual{}\)-self-dual fundamental weights as \((\omega_1,\dual{}\omega1)\), \dots \((\omega_l,\dual{}\omega_l)\) in a way that respects the order discussed in \cref{lem:symsum{}s-of-symmetric-pairs},  i.e.\ such that \(\omega_i + \dual{}\omega_i < \omega_k + \dual{}\omega_k\) only for \(i<k\).  Consider the following ideals in \(\R(G)\), and the following quotient rings:
  \begin{gather*}
    \ideal a := \left( \rsymsum{}(\omega_\alpha) \;\middle\vert\; \alpha\in\simpleroots{} \text{ such that } \dual{}\alpha = \alpha \right)\\
    \begin{aligned}
      \ideal b   & :=  \left( \rsymsum{}(\omega_i), \rsymsum{}(\dual{}\omega_i) \;\middle\vert\; k = 1, \dots l \right)&
      \ideal b^{<k} &:= \left( \rsymsum{}(\omega_i), \rsymsum{}(\dual{}\omega_i) \;\middle\vert\; 1 \leq i < k \right)\\
      \ideal d &:= \left( \rsymsum{}(\omega_i + \dual{}\omega_i) \;\middle\vert\; k = 1, \dots, l\right)&
      \ideal d^{>k} &:= \left( \rsymsum{}(\omega_i + \dual{}\omega_i) \;\middle\vert\; k < i \leq l \right)
    \end{aligned}\\
    \begin{aligned}
      (\R G)_k &:= \left.\R(G)\middle/(\ideal a + \ideal b^{<k} + \ideal d^{>k})\right.\\
      (\R(\levisubgroup{H}))_k    &:= \left.\R(\levisubgroup{H})\middle/(\ideal a + \ideal b^{<k} + \ideal d^{>k})\R(\levisubgroup{H})\right.
    \end{aligned}
  \end{gather*}
  In this notation, the rings appearing in the \namecref{prop:simplified-quotient} are:
  \begin{align*}
    \left.\R(\levisubgroup{H})\middle/(\ideal a + \ideal d)\right. &= (\R(\levisubgroup{H}))_1/(\rsymsum{}(\omega_1+\dual{}\omega_1)(\R(\levisubgroup{H}))_1\\
    \left.\R(\levisubgroup{H})\middle/(\ideal a + \ideal b)\right. &= (\R(\levisubgroup{H}))_l/(\rsymsum{}(\omega_l),\rsymsum{}(\dual{}\omega_l)(\R(\levisubgroup{H}))_l
  \end{align*}
  Moreover, for each \(k \in \{1,\dots,l-1\}\), we have the equality
  \[
    \frac{(\R(\levisubgroup{H}))_k}{\left(\rsymsum{}(\omega_k),\rsymsum{}(\dual{}\omega_k)\right)(\R(\levisubgroup{H}))_k} = \frac{(\R(\levisubgroup{H}))_{k+1}}{\rsymsum{}(\omega_{k+1} + \dual{}\omega_{k+1})(\R(\levisubgroup{H}))_{k+1}}.
  \]
  It therefore suffices to show that there are canonical projections
  \begin{equation}\label{eq:sofqbaw}
    q_k\colon
    \frac{(\R(\levisubgroup{H}))_k}{\rsymsum{}(\omega_k+\dual{}\omega_k)(\R(\levisubgroup{H}))_k}
    \twoheadrightarrow
    \frac{(\R(\levisubgroup{H}))_k}{\left(\rsymsum{}(\omega_k),\rsymsum{}(\dual{}\omega_k)\right)(\R(\levisubgroup{H}))_k}
  \end{equation}
  that induce isomorphisms on Tate cohomology.

  To this end, we first rewrite \cref{lem:symsum{}s-of-symmetric-pairs} in the following way:
  For any pair of fundamental weights \((\omega_k,\dual{}\omega_k)\), we have
  \begin{equation}\label{eq:vbifosli}
    \rsymsum{}(\omega_k + \dual{}\omega_k) = \rsymsum{}(\omega_k)\rsymsum{}(\dual{}\omega_k) + a\left(\rsymsum{}(\omega_k) + \dual{}\rsymsum{}(\omega_k)\right) + \tilde P
  \end{equation}
  in \(\R(G)\), with \(a\in\ZZ\) and \(\tilde P \in \ideal a + \ideal b^{<k}\).  This shows that  \(\ideal a + \ideal d \subset \ideal a + \ideal b\),  so we indeed have a canonical projection as stated in the \namecref{prop:simplified-quotient}. Moreover, in the quotient ring \((\R G)_k\), this simplifies to:
  \begin{equation}\label{eq:vbifosli2}
    \rsymsum{}(\omega_k+\dual{}\omega_k) = \rsymsum{}(\omega_k)\rsymsum{}(\dual{}\omega_k) + a\left(\rsymsum{}(\omega_k) + \dual{}\rsymsum{}(\omega_k)\right)
  \end{equation}
  So indeed, we have a canonical projection \(q_k\) as indicated in \eqref{eq:sofqbaw}.
  To see that \(q_k\) induces an isomorphism on Tate cohomology, we check each of the assumptions of \cref{lem:quotient-iso-on-Tate}, with \(R := (\R(\levisubgroup{H}))_k\), \(\circ := \dual{}\), \(\lambda := \rsymsum{}(\omega_k)\) and \(\gamma := \rsymsum{}(\omega_k + \dual{}\omega_k)\).
  The assumption that \(\rsymsum{}(\omega_k+\dual{}\omega_k)\) is contained in the ideal generated by \(\rsymsum{}(\omega_k)\) and \(\rsymsum{}(\dual{}\omega_k)\), and the equality of \(\rsymsum{}(\omega_k+\dual{}\omega_k)\) and  \(\rsymsum{}(\omega_k)\rsymsum{}(\dual{}\omega_k)\) in Tate cohomology, are both immediate from \eqref{eq:vbifosli2}.
  It remains to check that \((\rsymsum{}(\omega_k),\rsymsum{}(\dual{}\omega_k))\) is a regular sequence in \((\R(\levisubgroup{H}))_k\), and that \(\rsymsum{}(\omega_k+\dual{}\omega_k)\) is not a zero-divisor.
  These claims follow from following observations:
  \begin{compactenum}[(a)]
  \item \((\R(\levisubgroup{H}))_k\) is free as a module over \((\R G)_k\),
  \item \((\R G)_k\) is a free module over the polynomial ring \(S := \ZZ[\rsymsum{}(\omega_k),\rsymsum{}(\dual{}\omega_k)]\),
  \item \((\rsymsum{}(\omega_k),\rsymsum{}(\dual{}\omega_k))\) clearly is a regular sequence in \(S\), and \(\rsymsum{}(\omega_k+\dual{}\omega_k)\), expressed as on the right side of \eqref{eq:vbifosli2}, clearly is a non-zero divisor in \(S\).
  \end{compactenum}
  Observation (a) follows from the fact that \(\R(\levisubgroup{H})\) is free as a module over \(\R(G)\). For (b), write \(x_i := \rsymsum{}(\omega_i)\), and denote the duality \(\dual{}\) as \((-)^\circ\).  Then
  \[
    \R G_k \cong \frac{\ZZ[x_k,x_k^\circ,\dots,x_l,x_l^\circ]}{(f_{k+1},\dots,f_r)},
  \]
  and using \eqref{eq:vbifosli} we see that each relation \(f_i\) is of the form
  \[
   x_ix_i^\circ =  -a(x_i + x_i^\circ) + \tilde P,
  \]
  with \(\tilde P\) a polynomial in the variables \(x_j\), \(x_j^\circ\)  with \(k \leq j < i\).  So \((\R G)_k\) is a free module over \(\ZZ[x_k,x_k^\circ]\) with a basis given by all monomials
  \[
    (x_{k+1})^{n_{k+1}}(x_{k+1}^\circ)^{n'_{k+1}} \cdots (x_r)^{n_r}(x_r^\circ)^{n_r'},
  \]
  in which, for each \(i\), either \(n_i = 0\) or \(n_i' = 0\).
\end{proof}

\section{Proof of \cref{thm:main}}

\subsection{Condition~\ref{cond:connected} implies condition~\ref{cond:singlecell}}
\label{sec:connected-singlecell}
In this section, we study the geometry of the fixed-point cone \(\closedWeylchamberRR{H}^\dual{H} = \weightspaceRR^\dual{H} \cap \closedWeylchamberRR{H}\).  We begin by analysing the fixed-point space \(\weightspaceRR^\dual{H}\), before passing to the cone in \cref{prop:fixed-cone}.

As we will frequently cite results from \cite{kane}, we will work with the inner product \(\innerproduct{-}{-}\) on \(\weightspaceRR\) rather than the pairing \(\pairing{-}{-}\) in this section (see \cref{eq:innerproduct-versus-pairing} in \cref{sec:notation}.)  The root system of \((G,T)\) determines a decomposition of \(\weightspaceRR\) into disjoint \textbf{cells}.  The closed fundamental dual Weyl chamber \(\closedWeylchamberRR{}\) is a disjoint union of cells \(\cellRR{I}\) indexed by the subsets \(I\subset \simpleroots{}\), and a general cell is a translate \(w\cellRR{I}\) of such a cell for some \(w\in\Weyl{}\).  Alternatively, each cell can be defined by specifying, for each root \(\gamma\), one of the three constraints \(\innerproduct{\gamma}{-} = 0\)  or  \(\innerproduct{\gamma}{-} > 0\) or  \(\innerproduct{\gamma}{-} < 0\) \cite[\S\,5.2, Remark~2]{kane}.  Each \textbf{cell closure} \(w\cellclosureRR{I}\) can thus be defined by specifying, for each root \(\gamma\), one of the three constraints \(\innerproduct{\gamma}{-} = 0\), \(\innerproduct{\gamma}{-}\geq 0\) or \(\innerproduct{\gamma}{-} \leq 0 \).

When \(\dual{}\) is trivial (\cref{rem:trivial-involution}), the \(-1\)-eigenspace of any involution in the Weyl group is a union of cells; see \cite[Proof of proposition in \S\,27-4]{kane}.  This implies, in particular, that \(\weightspaceRR^\dual{H}\) is a union of cells in this case.  In general, we need to consider \(\dual{}\)-fixed point sets of cells instead, as the following discussion will show.

\begin{lem}\label{lem:fixed-cells}
   The following conditions on a subset \(I\subset\simpleroots{}\) and an element \(w\in\Weyl{}\) are equivalent:
  \begin{compactenum}[(a)]
  \item
    \(\dual{H} (w\cellRR{I}) = w\cellRR{I}\)
  \item
    \(\dual{} I = I\), and \(\dual{} w^{-1}\dual{H} w \in \Weyl{I}\)
  \item
    \(\dual{} I = I\), and the actions of \(w^{-1}\dual{H} w\) and \(\dual{}\) on \(\cellRR{I}\) agree
  \item
    \(\dual{} I = I\), and \((w\cellRR{I})^\dual{H} = w(\cellRR{I}^\dual{})\)
  \end{compactenum}
\end{lem}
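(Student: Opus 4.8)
The plan is to prove the four conditions equivalent by running the cycle $(a)\Rightarrow(b)\Rightarrow(c)\Rightarrow(d)\Rightarrow(a)$, using only elementary facts about the $\Weyl{}$-action on cells from \cite[\S\,5 and \S\,28]{kane}. First I would assemble those facts. The involution $\dual{} = -\longest{}$ stabilises the closed fundamental dual Weyl chamber $\closedWeylchamberRR{}$ and permutes the cells $\cellRR{J}$ it contains by $\dual{}\cellRR{J} = \cellRR{\dual{} J}$. The involution $\dual{H} = -\longest{H}$, being $-1$ times a Weyl element, is an isometry preserving the root system, hence permutes \emph{all} cells of $\weightspaceRR$; since distinct cells are disjoint, a cell is $\dual{H}$-stable as soon as it contains one $\dual{H}$-fixed point. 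The subgroup $\Weyl{I}$ is exactly the pointwise stabiliser of $\cellRR{I}$, and $\closedWeylchamberRR{}$ is a strict fundamental domain, so any $\Weyl{}$-translate of $\cellRR{I}$ that lies in $\closedWeylchamberRR{}$ must equal $\cellRR{I}$, with translating element in $\Weyl{I}$. Finally there is the bookkeeping identity $\dual{}\,w^{-1}\dual{H}w = \longest{}w^{-1}\longest{H}w\in\Weyl{}$, and $\dual{}$ is an involution; I would abbreviate $u := \dual{}\,w^{-1}\dual{H}w\in\Weyl{}$, so that $w^{-1}\dual{H}w = \dual{} u$.

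The three easy arrows then go as follows. For $(a)\Rightarrow(b)$, rewrite (a) as $(w^{-1}\dual{H}w)\cellRR{I} = \cellRR{I}$ and apply $\dual{}$ to get $u\cellRR{I} = \dual{}\cellRR{I} = \cellRR{\dual{} I}\subset\closedWeylchamberRR{}$; the strict fundamental domain property forces $u\cellRR{I} = \cellRR{I}$, hence $\cellRR{\dual{} I} = \cellRR{I}$, i.e.\ $\dual{} I = I$, and $u\in\Weyl{I}$. For $(b)\Rightarrow(c)$, the element $u\in\Weyl{I}$ fixes $\cellRR{I}$ pointwise, so $w^{-1}\dual{H}w = \dual{} u$ agrees with $\dual{}$ on $\cellRR{I}$, and $\dual{}$ maps $\cellRR{I}$ to itself because $\dual{} I = I$. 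For $(c)\Rightarrow(d)$, note $\dual{H}(wx) = wx$ if and only if $(w^{-1}\dual{H}w)(x) = x$, so (c) yields $(w\cellRR{I})^{\dual{H}} = w\{\,x\in\cellRR{I} : (w^{-1}\dual{H}w)(x)=x\,\} = w\{\,x\in\cellRR{I} : \dual{}(x)=x\,\} = w(\cellRR{I}^{\dual{}})$.

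The one step that is not a formality is $(d)\Rightarrow(a)$: from (d) one only learns that $w^{-1}\dual{H}w$ and $\dual{}$ have the same fixed locus inside $\cellRR{I}$, not that they agree on $\cellRR{I}$, so I cannot simply run the previous computation backwards. Instead I would extract a $\dual{H}$-fixed point of the cell $w\cellRR{I}$ and invoke stability. Since $\dual{} I = I$, the cell $\cellRR{I}$ is a non-empty $\dual{}$-stable convex cone, so for any $x\in\cellRR{I}$ the midpoint $\tfrac12\!\left(x + \dual{}(x)\right)$ lies in $\cellRR{I}^{\dual{}}$; thus $\cellRR{I}^{\dual{}}\neq\emptyset$, and therefore $(w\cellRR{I})^{\dual{H}} = w(\cellRR{I}^{\dual{}})\neq\emptyset$ by (d). A cell containing a $\dual{H}$-fixed point is $\dual{H}$-stable, so $\dual{H}(w\cellRR{I}) = w\cellRR{I}$, which is (a).

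The main obstacle is really just careful bookkeeping: keeping the two minus signs in $\dual{} = -\longest{}$ and $\dual{H} = -\longest{H}$ straight, so that $u = \dual{}\,w^{-1}\dual{H}w$ genuinely lies in $\Weyl{}$ and $\dual{}$-conjugation behaves like an involution, and quoting the strict fundamental domain property for the cells of $\closedWeylchamberRR{}$ in exactly the right form. The only genuinely non-formal ingredient is the convexity trick in $(d)\Rightarrow(a)$, which converts the fixed-point statement back into stability of the whole cell.
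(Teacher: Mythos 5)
Your proof is correct, and the first three arrows coincide with the paper's: for \((a)\Rightarrow(b)\) the paper makes exactly your argument, setting \(u := \dual{}\,w^{-1}\dual{H}w \in \Weyl{}\), observing that (a) gives \(u\cellRR{I} = \dual{}\cellRR{I} \subset \closedWeylchamberRR{}\), and using the fundamental-domain property to force \(u\) to fix \(\cellRR{I}\) pointwise, whence \(u\in\Weyl{I}\) and \(\dual{}I = I\); the arrows \((b)\Rightarrow(c)\) and \((c)\Rightarrow(d)\) are immediate in both treatments. Where you genuinely diverge is in closing the equivalence. The paper proves \((d)\Rightarrow(c)\) directly: condition (d) says \(w\dual{H}w^{-1}\) and \(\dual{}\) are two involutions of \(\cellclosureRR{I}\) with identical fixed points, and since they are determined by their action on the free abelian monoid \(\cellclosureRR{I}\cap\weightspaceZZ\), and involutions of free abelian monoids are determined by their fixed points, they must agree on \(\cellclosureRR{I}\). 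You instead prove \((d)\Rightarrow(a)\) by a convexity argument: \(\dual{}I=I\) makes \(\cellRR{I}\) a nonempty \(\dual{}\)-stable convex set, so the midpoint \(\tfrac12\bigl(x+\dual{}(x)\bigr)\) gives a point of \(\cellRR{I}^{\dual{}}\); by (d) the cell \(w\cellRR{I}\) then contains a \(\dual{H}\)-fixed point, and since \(\dual{H}\) preserves the root system it permutes the pairwise disjoint cells, so \(w\cellRR{I}\) is \(\dual{H}\)-stable. Your route is more elementary, avoiding the monoid-rigidity step entirely, at the mild cost that \((d)\Rightarrow(c)\) is only obtained by going around the cycle, whereas the paper's argument extracts the pointwise agreement in (c) directly from (d); both establish the same equivalence.
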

\begin{proof}
  The equivalences are special cases of \cite[observations (1) and (2) under \S\,2.3]{steinberg:endomorphisms}, with \(w := \longest{H}\longest{}\) and \(\sigma := \dual{}\).  They can be verified directly as follows.
  For the implication (a \(\Rightarrow b)\), note that the element \(u := \dual{} w^{-1}\dual{H} w\) is certainly an element of the Weyl group \(\Weyl{}\), as the minus signs of \(\dual{}\) and \(\dual{H}\) cancel.   The assumption in (a) can be reformulated as \(u\cellRR{I} = \dual{} \cellRR{I}\).  It follows in particular that \(u\cellRR{I} \subset \closedWeylchamberRR{}\), as \(\cellRR{I} \subset \closedWeylchamberRR{}\) and as \(\dual{}\closedWeylchamberRR{} = \closedWeylchamberRR{}\).  Now, as \(\closedWeylchamberRR{}\) is a fundamental domain for the action of \(\Weyl{}\), it follows that \(u\) must fix \(\cellRR{I}\) pointwise.  Thus \(u\in \Weyl{I}\).  Moreover, it now follows that \(\cellRR{I} = \dual{}\cellRR{I}\), which is equivalent to \(I = \dual{} I\).  This proves (b).  The implication (a \(\Leftarrow\) b) is similar but simpler.
  The equivalence (b \(\Leftrightarrow\) c) and the implication (c \(\Rightarrow\) d) are immediate.
  For the final implication (c \(\Leftarrow\) d), note that (d) implies that we have two involutions \(w\dual{H} w^{-1}\) and \(\dual{}\) on \(\cellclosureRR{I}\) with identical fixed points.  These involutions are determined by their action on the free abelian monoid \(\cellclosureRR{I} \cap \weightspaceZZ\), and involutions on free abelian monoids are completely determined by their fixed points.  So in fact \(w\dual{H} w^{-1}\) and \(\dual{}\) agree on \(\cellclosureRR{I}\), implying (c).
\end{proof}
As \(\dual{H}\) permutes cells, it follows that the fixed-point space of \(\dual{H}\) can be decomposed as
\begin{equation}\label{eq:fixed-space-decomposition-1}
  \weightspaceRR^\dual{H} = \bigcup_{(I,w)} w(\cellRR{I}^\dual{}),
\end{equation}
where \((I,w)\) ranges over all pairs satisfying the equivalent conditions of \cref{lem:fixed-cells}.  Of course, different pairs may determine the same cell.  As \(\weightspaceRR^\dual{H}\) has dimension \(\plusdim(\dual{H})\), it follows in particular that \(\dim_\RR(\cellclosureRR{I}^\dual{}) \leq \plusdim(\dual{H})\) for any pair \((I,w)\) appearing here. Equivalently,
\begin{equation}\label{eq:fixed-cells}
  \cardinality{\frac{I}{\dual{}}} \geq \cardinality{\frac{\simpleroots{}}{\dual{}}} - \cardinality{\frac{\simpleroots{H}}{\dual{H}}}
\end{equation}
whenever \((I,w)\) satisfies the equivalent conditions of \cref{lem:fixed-cells}.  If we pass to cell closures in \eqref{eq:fixed-space-decomposition-1}, then it suffices to consider just those cells of maximum dimension, i.e.\ of dimension equal to \(\plusdim(\dual{H})\).
We can thus rewrite \eqref{eq:fixed-space-decomposition-1} as
\begin{equation}\label{eq:fixed-space-decomposition-2}
  \weightspaceRR^\dual{H} = \bigcup_{(I,w)} w(\cellclosureRR{I}^\dual{}),
\end{equation}
where \((I,w)\) ranges over all pairs satisfying the equivalent conditions of \cref{lem:fixed-cells}, and the additional condition that
\begin{equation}\label{eq:fixed-cells-2}
  \cardinality{\frac{I}{\dual{}}} = \cardinality{\frac{\simpleroots{}}{\dual{}}} - \cardinality{\frac{\simpleroots{H}}{\dual{H}}}
\end{equation}
We now pass to the fixed cone:
\begin{prop}\label{prop:fixed-cone}
  Consider the fixed-point cone \(\closedWeylchamberRR{H}^\dual{H} = \weightspaceRR^\dual{H} \cap \closedWeylchamberRR{H}\).
  \begin{enumerate}[(a)]
    \item
      The  set \(\closedWeylchamberRR{H}^\dual{H}\) is a convex cone of dimension \(\plusdim(\dual{H})\).  It is generated over \(\RR_{\geq 0}\) by the elements
      \(e_\theta := \omega_\theta + \dual{H}\omega_\theta\),  where \(\theta\) ranges over all \(\theta\in\simpleroots{H}\).

    \item
      The set \(\closedWeylchamberRR{H}^\dual{H}\) is a union of sets of the form \(w(\cellclosureRR{I}^\dual{})\), for certain pairs \((w,I)\) that satisfy the equivalent conditions of \cref{lem:fixed-cells} and the additional condition \eqref{eq:fixed-cells-2}.
  \item
      Suppose that no root \(\gamma\) of the root system of \((G,T)\) switches signs on \(\closedWeylchamberRR{H}^\dual{H}\), i.e.\ that every root \(\gamma\) satisfies
      \begin{equation}
        \label{eq:cell-condition}
        \tag{$\ddagger$}
        \begin{aligned}
          \text{either} &\quad \innerproduct{\gamma}{e} \geq 0 \text{ for all } e \in \closedWeylchamberRR{H}^\dual{H},\\
          \text{ or}    &\quad \innerproduct{\gamma}{e} \leq 0 \text{ for all } e \in \closedWeylchamberRR{H}^\dual{H}.
        \end{aligned}
      \end{equation}
      Then \(\closedWeylchamberRR{H}^\dual{H}\) is equal to a single set of the form \(w(\cellclosureRR{I}^\dual{})\).
      Thus, condition~\ref{cond:singlecell}\(_I\) is satisfied with respect to a parameter \(I\) satisfying \eqref{eq:fixed-cells-2}.
\end{enumerate}
\end{prop}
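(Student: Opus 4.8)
The plan is to establish the three parts in order, deriving the concrete description of the fixed cone in (a) and then exploiting it throughout (b) and (c).
\emph{Part (a).} Since the fundamental weights $\omega_\alpha$ ($\alpha\in\simpleroots{}$) form an $\RR$-basis of $\weightspaceRR$ dual to the simple coroots, every $\tau\in\closedWeylchamberRR{H}$ decomposes uniquely as $\tau=\sum_{\theta\in\simpleroots{H}}\pairing{\theta^\vee}{\tau}\,\omega_\theta+v$ with $v$ lying in the subspace $V_0:=\{x : \pairing{\alpha^\vee}{x}=0\text{ for all }\alpha\in\simpleroots{H}\}$, on which $\dual{H}$ acts by $-1$. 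For $\tau$ fixed by $\dual{H}$ one gets, on adding this identity to its image under $\dual{H}$, that $2\tau=\sum_{\theta}\pairing{\theta^\vee}{\tau}\,e_\theta$ with $\pairing{\theta^\vee}{\tau}\geq 0$; conversely each $e_\theta$ lies in $\closedWeylchamberRR{H}^\dual{H}$ since $\pairing{\alpha^\vee}{e_\theta}=\delta_{\alpha\theta}+\delta_{\dual{H}\alpha,\theta}\geq 0$ ($\dual{H}$ is orthogonal and permutes $\simpleroots{H}$). This identifies $\closedWeylchamberRR{H}^\dual{H}$ with the convex cone $\sum_{\theta\in\simpleroots{H}}\RR_{\geq 0}e_\theta$. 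The inclusion $\closedWeylchamberRR{H}^\dual{H}\subseteq\weightspaceRR^\dual{H}$ bounds its dimension above by $\plusdim(\dual{H})$; for the matching lower bound I would note that $\sum_{\theta\in\simpleroots{H}}e_\theta$ is a $\dual{H}$-fixed vector in the interior of $\closedWeylchamberRR{H}$ (it pairs with every $\alpha^\vee$, $\alpha\in\simpleroots{H}$, to $2>0$), so $\weightspaceRR^\dual{H}$ meets that interior and $\closedWeylchamberRR{H}^\dual{H}$ is full-dimensional inside it.

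\emph{Part (b).} The elementary input is that every root has constant sign on each open cell, so every open cell $w\cellRR{I}$ is either contained in $\closedWeylchamberRR{H}$ (call it \emph{inner}) or disjoint from it, and when it is inner so is $w\cellclosureRR{I}$. In particular $\closedWeylchamberRR{H}$ is a union of (inner) cells, so $\closedWeylchamberRR{H}^\dual{H}$ is a finite union of $\dual{H}$-fixed-point sets of cells; since $\dual{H}$ permutes cells (cf.~\eqref{eq:fixed-space-decomposition-2}), the fixed-point set of each such cell is itself a union of fixed-point sets of $\dual{H}$-stable inner cells, and for those $(w\cellclosureRR{I})^\dual{H}=w(\cellclosureRR{I}^\dual{})$ by extending the agreement of $w^{-1}\dual{H}w$ and $\dual{}$ in \cref{lem:fixed-cells}(c),(d) from $\cellRR{I}$ to its linear span. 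Because by part (a) this union is a full-dimensional convex cone while each member is a closed convex cone of dimension at most $\plusdim(\dual{H})$, the members of strictly smaller dimension are redundant (a full-dimensional convex set is the closure of its interior, which cannot be covered by a finite union of lower-dimensional closed sets). What remains is a union over exactly the pairs satisfying \cref{lem:fixed-cells} together with the dimension condition~\eqref{eq:fixed-cells-2}, which is the assertion of (b).

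\emph{Part (c).} Under the hypothesis~\eqref{eq:cell-condition}, every root $\gamma$ has a fixed sign on all of $\closedWeylchamberRR{H}^\dual{H}$; as the cone is generated over $\RR_{\geq 0}$ by the $e_\theta$, the numbers $\innerproduct{\gamma}{e_\theta}$ all share that sign, so for a point $p=\sum_\theta c_\theta e_\theta$ with all $c_\theta>0$ one has $\innerproduct{\gamma}{p}=0$ precisely when $\gamma$ is orthogonal to the whole cone. Hence all points in the relative interior of $\closedWeylchamberRR{H}^\dual{H}$ have the same cell-defining sign pattern and lie in a single open cell $w\cellRR{I}$, which is inner (each $\pairing{\alpha^\vee}{p}>0$ for $\alpha\in\simpleroots{H}$) and $\dual{H}$-stable (it contains a $\dual{H}$-fixed point). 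Taking closures gives $\closedWeylchamberRR{H}^\dual{H}\subseteq w(\cellclosureRR{I}^\dual{})$, while the reverse inclusion is furnished by part (b), since this pair is one of its top-dimensional members and so satisfies~\eqref{eq:fixed-cells-2}. Thus $\closedWeylchamberRR{H}^\dual{H}=w(\cellclosureRR{I}^\dual{})$, which is exactly condition~\ref{cond:singlecell}$_I$.

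The step I expect to require the most care is the bookkeeping in part (b): making precise the passage from the fixed points of an open cell to those of its closure and then to the translated cone $w(\cellclosureRR{I}^\dual{})$, together with the verification that the top-dimensional cells alone already cover $\closedWeylchamberRR{H}^\dual{H}$ — it is here that the convexity obtained in part (a) is indispensable. By contrast, the sign-pattern argument powering part (c) is short, and it is really the conceptual heart of the proposition.
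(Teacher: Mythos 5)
Your proof is correct, and it runs on the same machinery as the paper's: part (a) via the averaging identity \(2\tau=\sum_\theta\pairing{\theta^\vee}{\tau}e_\theta\), and parts (b), (c) via the constant sign of roots on cells together with \cref{lem:fixed-cells}. The differences are organizational rather than conceptual, but worth recording. For (b), the paper simply intersects the previously established decomposition \eqref{eq:fixed-space-decomposition-2} of \(\weightspaceRR^\dual{H}\) with \(\closedWeylchamberRR{H}\), observing that the simple roots of \(\simpleroots{H}\) do not change sign on fixed cells; you instead decompose \(\closedWeylchamberRR{H}\) itself into ``inner'' cells, take \(\dual{H}\)-fixed parts, and then discard the lower-dimensional members by the convexity/density argument resting on (a) — in effect re-proving, inside the cone, the top-dimensionality pruning that the paper performed once and for all when passing from \eqref{eq:fixed-space-decomposition-1} to \eqref{eq:fixed-space-decomposition-2}. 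For (c), the paper takes the minimal cell closure containing \(\closedWeylchamberRR{H}^\dual{H}\) and concludes equality from a dimension comparison with (a); you instead pin down the cell by the common sign pattern of relative-interior points \(\sum_\theta c_\theta e_\theta\) (all \(c_\theta>0\)), and get the reverse inclusion directly from the fact that this cell is inner and \(\dual{H}\)-stable, so \(w(\cellclosureRR{I}^\dual{})=(w\cellclosureRR{I})^\dual{H}\subseteq\closedWeylchamberRR{H}\cap\weightspaceRR^\dual{H}\). This makes the ``equal to, not just contained in'' step fully explicit, which is arguably more transparent than the dimension count, at the cost of the extra bookkeeping you flag (extending the agreement of \(w^{-1}\dual{H}w\) and \(\dual{}\) from \(\cellRR{I}\) to its linear span, and hence to \(\cellclosureRR{I}\)) — all of which you handle correctly. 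One small point of rigor to note if you write this up: your identification of the relative interior of the cone with the strictly positive combinations of the \(e_\theta\) is a standard fact about finitely generated cones, but it deserves a one-line justification or reference.
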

\begin{proof}
  (a)
  The elements \(e_\theta\) generate the fixed-point space \((\weightspaceRR)^\dual{H}\) over \(\RR\).  Discarding duplicates arising from pairs of \(\dual{H}\)-dual roots \(\{\theta,\dual{H}\theta\}\subset\simpleroots{H}\), we obtain an \(\RR\)-basis of \((\weightspaceRR)^\dual{H}\) of dimension \(\plusdim(\dual{H})\).  Each of the basis elements \(e_\theta\) lies in \(\closedWeylchamberRR{H}\), as \(\omega_\theta\) lies in \(\closedWeylchamberRR{H}\) and as \(\dual{H}\closedWeylchamberRR{H} = \closedWeylchamberRR{H}\).

  (b)
   The cone \(\closedWeylchamberRR{H}^\dual{H}\)  is obtained from \(\weightspaceRR^\dual{H}\) by intersecting with the closed Weyl chamber \(\closedWeylchamberRR{H}^\dual{H}\). This closed Weyl chamber is defined by the inequalities \(\innerproduct{\theta}{-}\geq 0\) for all \(\theta\in\simpleroots{H}\).   Given decomposition \eqref{eq:fixed-space-decomposition-2} of \(\weightspaceRR^\dual{H}\), it thus suffices to observe that  \(\innerproduct{\theta}{-}\) cannot change sign on a fixed cell \(w(\cellclosureRR{I}^\dual{})\).
  % We note that \(A := \closedWeylchamberRR{H}^\dual{H}\) is contained in the union of all cells \(w\cellRR{I}\) that lie in \(\closedWeylchamberRR{H}\) and satisfy \(\dual{H} w\cellRR{I} = w\cellRR{I}\).  As \(A\) is closed and point-wise fixed by \(\dual{H}\), it is thus equal to the union of \(\dual{H}\)-fixed-point sets \((w\cellclosureRR{I})^\dual{H}\) of the closures of these cells, which we can rewrite as \(w(\cellclosureRR{I}^\dual{})\) by \cref{lem:fixed-cells}.  It clearly suffices to consider maximal cells, and as \(\closedWeylchamberRR{H}^\dual{H}\) and \(\weightspaceRR^\dual{H}\) have the same dimension, these must be among the cells appearing in \eqref{eq:fixed-space-decomposition-2} above.

   (c) The assumption implies that \(\closedWeylchamberRR{H}^\dual{H}\) is contained in a single cell closure.
   % \MAYBE{This is tautological:  Suppose no root \(\gamma\) switches signs on a certain subset \(A\subset \weightspaceRR\).  Then \(A\) is contained in the cell closure defined by specifying, for each root \(\gamma\), either \(\innerproduct{\gamma}{-}\geq 0\) or \(\innerproduct{\gamma}{-}\leq 0\), depending on whether \(\gamma\) is non-negative or non-positive on \(A\).}
   % By intersection all cell closures containing \(A\), we obtain a unique smallest cell closure \(w\cellclosureRR{I}\) containing \(A\).  This uniquely determines \(I\) (though not \(w\)).  As \(\dual{H} A = A\), and as \(\dual{H}\) maps cell closures to cell closures, this smallest cell closure must also satisfy \(\dual{H} w\cellclosureRR{I} = w\cellclosureRR{I}\). Thus, \(A = (w\cellclosureRR{I})^\dual{H}\).
   The minimal such cell closure \(w\cellclosureRR{I}\) will satisfy the equivalent conditions of \cref{lem:fixed-cells}.  Thus, we find that \(\closedWeylchamberRR{H}^\dual{H}\) is contained in a single fixed cell \((w\cellclosureRR{I})^\dual{H} = w(\cellclosureRR{I}^\dual{})\).  On the other hand, as observed above \eqref{eq:fixed-cells}, the dimension of \(w(\cellclosureRR{I}^\dual{})\) is bounded above by \(\plusdim(\dual{H})\), which by (a) is equal to the dimension of \(\closedWeylchamberRR{H}^\dual{H}\).  This implies that \(\closedWeylchamberRR{H}^\dual{H}\) is not just contained in, but equal to the fixed cell \(w(\cellclosureRR{I}^\dual{})\).
\end{proof}

It clearly suffices to evaluate the conditions \eqref{eq:cell-condition} specified in part~(c) on the generators \(e_\theta\) specified in part~(a) of \cref{prop:fixed-cone}.
When \(\plusdim(\dual{H}) = 1\), there is just one such generator, and the conditions are trivially satisfied.  Thus, condition~\ref{cond:singlecell} is always satisfied for \(\simpleroots{H}\) of Dynkin types \(A_1\) or \(A_2\).   Verifying that condition~\ref{cond:singlecell} holds more generally for all cases listed in \cref{thm:main} requires more work:

\begin{prop}
  \label{thm:small-singlecell}
  Condition~\ref{cond:singlecell} is satisfied in all cases listed in \cref{thm:main}.
\end{prop}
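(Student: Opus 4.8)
The plan is to establish, in every case listed in \cref{thm:main}, the hypothesis of part~(c) of \cref{prop:fixed-cone}: that no root \(\gamma\) of the root system of \((G,T)\) switches signs on the fixed-point cone \(\closedWeylchamberRR{H}^\dual{H}\). By part~(a) of \cref{prop:fixed-cone}, this cone is generated over \(\RR_{\geq 0}\) by the vectors \(e_\theta = \omega_\theta + \dual{H}\omega_\theta\) with \(\theta\in\simpleroots{H}\), and since \(\dual{H}\) is an orthogonal involution we may rewrite
\[
  \innerproduct{\gamma}{e_\theta} = \innerproduct{\gamma + \dual{H}\gamma}{\omega_\theta}.
\]
So condition~\eqref{eq:cell-condition} for a fixed root \(\gamma\) simply asserts that the integers \(\innerproduct{\gamma}{e_\theta}\), \(\theta\in\simpleroots{H}\), are either all \(\geq 0\) or all \(\leq 0\). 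Since each \(e_\theta\) lies in \(\closedWeylchamberRR{H}\), a root \(\gamma\in\roots{H}\) pairs with all the \(e_\theta\) to numbers of one sign --- namely its sign as a root of \(\levisubgroup{H}\) --- so only the roots \(\gamma\in\roots{}\setminus\roots{H}\) require attention, and for each it is enough to exclude a pair \(\theta_1,\theta_2\in\simpleroots{H}\) with \(\innerproduct{\gamma}{e_{\theta_1}} > 0 > \innerproduct{\gamma}{e_{\theta_2}}\).

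\textbf{Classical types.} The case \(\fixrank{H} = 1\), i.e.\ \(\simpleroots{H}\) of type \(A_1\) or \(A_2\), has already been handled: there is then a single generator \(e_\theta\), so \eqref{eq:cell-condition} is vacuous. Assume \(\fixrank{H}\geq 2\). For \(G\) of classical type \(A_n\), \(B_n\), \(C_n\) or \(D_n\) I would argue directly in the usual coordinate realizations. The connected subdiagrams \(\simpleroots{H}\) are quickly enumerated up to \(\Weyl{}\)-equivalence by the graphical calculus; for each, one reads off \(\dual{H}\) from \cref{rem:trivial-involution} and writes the generators \(e_\theta\) in coordinates. These turn out to be ``nested'': suitably ordered, each \(e_\theta\) is obtained from its predecessor by adjoining further coordinate vectors, with at most the last coordinate of the block supporting \(\simpleroots{H}\) sign-reversed (and only when \(\dual{H}\) is non-trivial). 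Consequently, for every root \(\gamma\) of \(G\) outside \(\roots{H}\), the pairings \(\innerproduct{\gamma}{e_\theta}\) are either all \(\geq 0\), all \(\leq 0\), or all zero --- no sign change occurs. The single exception is \(\simpleroots{H}\) of type \(D_{2k}\) with \(k>1\): there the two fork nodes carry \(\dual{H}\)-fixed fundamental weights, so their generators differ only in the sign of the last block coordinate, and a root of \(G\) meeting the block solely through that coordinate (e.g.\ \(\varepsilon_a+\varepsilon_n\) with \(\varepsilon_a\) outside the block) pairs negatively with one fork generator and positively with the other. This is exactly the obstruction behind \cref{eg:DD}, which is why that family is excluded from \cref{thm:main}; nothing like it occurs for \(A_n\), \(B_n\), \(C_n\), or for \(D_n\) with \(\simpleroots{H}\) of type \(A_m\), \(B_m\) or \(D_{2k+1}\).

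\textbf{Exceptional types.} Here there are only finitely many flag varieties, namely those in \cref{table:results-connected}, and the same coordinate bookkeeping would settle each of them. It is cleaner, however, simply to compute: for every diagram marked \(\checkmark\) I would enumerate the roots of \(G\), compute the \(e_\theta\), and verify \eqref{eq:cell-condition} directly on the pairings \(\innerproduct{\gamma}{e_\theta}\). This is carried out in Macaulay2~\cite{M2} using the \texttt{WeylGroups} package~\cite{M2:WeylGroups}; the code is available at~\cite{code}. The same computation confirms that \eqref{eq:cell-condition} fails for each diagram in \cref{table:results-connected} that is \emph{not} marked \(\checkmark\), which is why those flag varieties are absent from \cref{thm:main}. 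In each surviving case, part~(c) of \cref{prop:fixed-cone} then yields a parameter \(I\) satisfying \eqref{eq:fixed-cells-2}, which is precisely condition~\ref{cond:singlecell}.

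\textbf{Main obstacle.} The real difficulty is the sheer volume of case-checking rather than any single idea: the reduction above is short, but one must organize the classical argument so that the infinite families \(A_n\), \(B_n\), \(C_n\), \(D_n\) are all covered by the single nesting observation, track the involutions \(\dual{H}\) carefully, and make sure that the restrictions imposed in \cref{thm:main} --- no \(\levisubgroup{H}\) of type \(D_{2k}\) classically, and only \(\checkmark\)-marked diagrams in the exceptional case --- are genuinely used, since by \cref{eg:DD} they cannot be relaxed.
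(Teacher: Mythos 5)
Your overall skeleton is the same as the paper's: reduce \ref{cond:singlecell} to the no-sign-change condition \eqref{eq:cell-condition} via parts (a) and (c) of \cref{prop:fixed-cone}, note that roots of \(\roots{H}\) and the case \(\fixrank{H}=1\) are harmless, treat the classical families by hand, and delegate the finitely many exceptional cases to the Macaulay2/\texttt{WeylGroups} computation. The gap is in the classical part, which is exactly where the work lies. Your ``nesting'' claim is not correct as stated: for a subdiagram of type \(A_k\) the block involution \(\dual{H}\) is the diagram flip, and the generators \(e_\theta\) have roughly half of their block coordinates negated, not ``at most the last coordinate''. Concretely, for \(H=A_3=\{\alpha_1,\alpha_2,\alpha_3\}\) inside \(A_4\) or \(B_5\) one gets (modulo the trace, in the \(A_4\) case) \(e_{\theta_1}=e_{\theta_3}=\varepsilon_1-\varepsilon_4\) and \(e_{\theta_2}=\varepsilon_1+\varepsilon_2-\varepsilon_3-\varepsilon_4\). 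Your parenthetical ``(and only when \(\dual{H}\) is non-trivial)'' is also backwards: the isolated last-coordinate sign reversal occurs precisely when the block involution is \emph{trivial}, namely for \(D_{2k}\) (where \(e_{n-1}\) ends in \(-\varepsilon_n\) and \(e_n\) in \(+\varepsilon_n\)), whereas for the non-trivial involutions (\(A_k\), \(D_{2k+1}\)) the pattern is different.

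More importantly, even with a correct coordinate description, the word ``Consequently'' hides the actual verification. For roots meeting the block in a single coordinate the conclusion is indeed immediate, but for roots meeting it in two coordinates -- e.g.\ \(\varepsilon_a+\varepsilon_b\) with both indices in an \(A_k\)-block inside \(B_n\), \(C_n\) or \(D_n\), or every root \(\varepsilon_a+\varepsilon_b\) in the cases \(D_n\supset A_{n-1}\), \(D_n\supset A_{n-2}\) -- the pairing with \(e_{\theta_i}\) is a sum of two step functions of opposite signs turning on at indices \(a\) and \(k+2-b\), and one must argue that this sum never takes both signs as \(i\) varies. That argument is short but it is not supplied, and it is the substance of the proposition; the paper carries it out via the numerical criterion of \cref{lem:numerical-criterion} (expressing \(\innerproduct{\gamma}{e_\theta}\) through the root coefficients and Cartan data of \cref{table:subsets}) followed by a case-by-case analysis of the positive roots for each pair \((\simpleroots{},\simpleroots{H})\), with \(D_n\supset D_{2k}\) failing exactly as in \cref{eg:DD}. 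Your approach can be completed along the coordinate lines you sketch -- and would then be a slightly more uniform alternative to the paper's tables -- but as written the classical case is asserted rather than proved, on the basis of a structural description that is false for the \(A_k\)-type subdiagrams.
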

\begin{proof}
  As we have already remarked, it suffices to check the conditions \eqref{eq:cell-condition} in part~(c) of \cref{prop:fixed-cone} for the finitely many generators specified in part~(a) of \cref{prop:fixed-cone}.  Moreover, it suffices to check the conditions only for positive roots of \(\simpleroots{}\).

 Let us write the generators \(e_\theta\) and the positive roots \(\gamma\) as follows, for certain integer coefficients \(m_\beta^\theta\), \(a_\theta\) and \(b_\theta\):
\begin{align}
  \gamma &= \textstyle\sum_{\theta\in\simpleroots{H}} a_\theta \theta + \sum_{\beta \in \Sigma\setminus \simpleroots{H}} b_\beta \beta  \label{eq:gamma-expanded}\\
  e_\theta  &= \textstyle\omega_\theta + \omega_{\dual{H} \theta} + \sum_{\beta \in \Sigma\setminus \simpleroots{H}} m_\beta^\theta \omega_\beta \label{eq:etheta-expanded}
\end{align}
The coefficients \(m_\beta^\theta\) turn out to be non-zero only for the direct neighbours \(\beta\) of the connected of the Dynkin diagram of \(\simpleroots{H}\) that contains \(\theta\) (see  \cite[Corollary~5.5]{zibrowius:twisted}).
Up to \(\Weyl{}\)-equivalence, the subsets \(\simpleroots{H}\subset\simpleroots{}\) that we need to consider are listed in \cref{table:subsets} (classical types) and \cref{table:results-connected} (exceptional types).  Each of these subsets \(H\) is connected, has at most two neighbours \(\beta_1\) and \(\beta_2\), and for each such neighbour \(\beta\) there is a unique simple root \(\vartheta_\beta\in \simpleroots{H}\) to which it is connected in the diagram.  The coefficient \(m_\beta^\theta\) can be described more explicitly using this neighbour \(\vartheta_\beta\).  This yields the following reformulation of our condition:

\begin{table}
  \begin{adjustwidth}{-4cm}{-4cm}
    \centering
  \begin{tabular}{LLLCCCC}
    \toprule
    \simpleroots{} & \simpleroots{H}         &                                                                                    & \vartheta_{\beta} & (\bar C^{\theta_i,\vartheta_{\beta}}+\bar C^{\dual{H}\theta_i,\vartheta_{\beta}}) & C_{\vartheta_\beta,\beta} & \frac{\innerproduct{\beta}{\beta}}{\innerproduct{\theta_i}{\theta_i}}\\
    \midrule
    A_n            & A_{k\quad  2 < k < n}   & \dynkin[labels={\theta_1,\theta_2,\theta_k,\beta}]     A{**.*o.o}                  & \theta_k  & 1 & -1 & 1\\
    B_n            & A_{k\quad  2 < k < n-1} & \dynkin[labels={\theta_1,\theta_2,\theta_k,\beta}]     B{**.*o.oo}                 & \theta_k & 1 & -1 &  1 \\
    B_n            & A_{n-1}                 & \dynkin[labels={\theta_1,\theta_2,,,\theta_{n-1},\beta}] B{**.***o}                & \theta_{n-1} & 1 & -2 & \tfrac{1}{2}\\
    B_n            & B_{k\quad  1 < k < n}   & \dynkin[labels={,,\beta,\theta_1,\theta_2,\theta_k}]    B{oo.o*.**}                & \theta_1 & \begin{cases} 2 & \text{for } i \neq k \\ 1 & \text{for } i = k \end{cases} & -1 & \begin{cases} 1 & \text{for } i \neq k \\ 2 & \text{for } i = k \end{cases}\\
    C_n            & A_{k\quad  2 < k < n-1} & \dynkin[labels={\theta_1,\theta_2,\theta_k,\beta}]     C{**.*o.oo}                 & \theta_k & 1 & -1 & 1\\
    C_n            & A_{n-1}                 & \dynkin[labels={\theta_1,\theta_2,,,\theta_{n-1},\beta}] C{****.*o}                & \theta_{n-1} & 1 & -1 & 2 \\
    C_n            & C_{k\quad  1 < k < n}   & \dynkin[labels={,,\beta,\theta_1,\theta_2,\theta_k}]    C{oo.o*.**}                & \theta_1 & 2 & -1 & \begin{cases} 1 & \text{for } i \neq k \\ \tfrac{1}{2} & \text{for } i = k \end{cases} \\
    D_n            & A_{k\quad  2 < k < n-2} & \dynkin[labels={\theta_1,\theta_2,\theta_k,\beta}]     D{**.*o.ooo}                & \theta_k & 1 & -1 & 1\\
    D_n            & A_{n-2}                 & \dynkin[labels={\theta_1,\theta_2,,,\theta_{n-2},\beta_1,\beta_2}]     D{**.***oo} & \theta_{n-2} & 1 & -1 & 1\\
    D_n            & A_{n-1}                 & \dynkin[labels={\theta_1,\theta_2,,,,\theta_{n-1},\beta}]     D{**.****o}          & \theta_{n-2} & \begin{cases} 1 & \text{for } i \in \{1,n-1\} \\ 2 & \text{for } i  \in \{ 2,\dots,n-2 \} \end{cases} & -1 & 1\\
    %D_n           & A_{n-1}                 & \dynkin[labels={\theta_1,\theta_2,\theta_{n-2},\beta_1,\beta_2}]     D{**.*o*}     \\ THIS CASE IS SYMMETRIC TO THE PREVIOUS CASE
    D_n            & A_3                     & \dynkin[labels={,,,\beta,,\theta_1,\theta_3}]     D{oo.oo***}                      & \theta_2 & \begin{cases} 1 & \text{for } i \in \{1,3\} \\ 2 & \text{for } i = 2 \end{cases} & -1 & 1\\
    D_n            & D_{k\quad 3 < k < n}    & \dynkin[labels={,\beta,\theta_1,\theta_2,,\theta_{k-1},\theta_k}]     D{o.o**.***}  & \theta_1 & \begin{cases}2 & \text{for } i \in \{1,\dots,k-2\} \\  1 & \text{for } i\in\{k-1,k\} \end{cases} & -1 & 1\\
    \bottomrule
  \end{tabular}
\end{adjustwidth}
  \caption{The cases with classical \(G\) that need to be considered in the proof of \cref{thm:main}.}
  \label{table:subsets}
\end{table}

\begin{lem}\label{lem:numerical-criterion}
  Suppose (for simplicity) that \(\simpleroots{H}\) is connected.
  The conditions \eqref{eq:cell-condition} in part~(c) of \cref{prop:fixed-cone}
  are equivalent to the following numerical criterion: for every positive root \(\gamma\) of \((G,T)\), the expression
  \[
    \Delta_\gamma(\theta) := (a_\theta + a_{\dual{H} \theta}) + \sum_{\beta}  b_\beta\cdot (\bar C^{\theta,\vartheta_\beta} + \bar C^{\dual{H} \theta, \vartheta_\beta})\cdot C_{\vartheta_\beta,\beta} \cdot \tfrac{\innerproduct{\beta}{\beta}}{\innerproduct{\theta}{\theta}}
  \]
  is either non-positive for all \(\theta \in \simpleroots{H}\) or non-negative for all \(\theta \in \simpleroots{H}\).
  Here \(a_\theta\geq 0\) and \(b_\theta\geq 0\) are as in \eqref{eq:gamma-expanded}, \(C_{\alpha,\beta} = \innerproduct{\alpha}{\beta^\vee}\) is the Cartan matrix of the root system of \((G,T)\), and \(\bar C^{\sigma,\theta}\) is the inverse of the Cartan matrix of the subroot system spanned by \(\simpleroots{H}\).  The sum is over all neighbours \(\beta\) of \(\simpleroots{H}\), and \(\vartheta_\beta\) denotes the unique simple root \(\vartheta_\beta\in\simpleroots{H}\) connected to \(\beta\).
  % The assumptions in part (c) of \cref{prop:T-fixed-is-union-of-S-fixed}
  % and \eqref{eq:etheta-expanded} above.
  % and \(m_\beta^\theta\)
\end{lem}
\begin{proof}
  Recall that the fundamental weights \(\omega_j\) are defined by the property that \(\pairing{\alpha_i^\vee}{\omega_j} = \delta_{ij}\).  In view of \cref{eq:innerproduct-versus-pairing}, this means that \(\innerproduct{\omega_i}{\alpha_j} = \delta_{ij} \frac{\innerproduct{\alpha_i}{\alpha_i}}{2}\).  Moreover,   \(m_\beta^\theta = (\bar{C}^{\theta,\vartheta_\beta} + \bar{C}^{\dual{H} \theta,\vartheta_\beta})\cdot C_{\vartheta_\beta,\beta}\) by \cite[Corollary~5.5]{zibrowius:twisted}.
  The above expression therefore differs from \(\innerproduct{\gamma}{e_\theta}\) only by a factor \(\tfrac{\innerproduct{\theta}{\theta}}{2}\).
\end{proof}
The coefficients \(a_\theta\) and \(b_\beta\) of all positive roots \(\gamma\) are conveniently tabulated in \cite[Plates I--IX]{bourbaki456}. The factors \((\bar C^{\theta,\vartheta_\beta} + \bar C^{\dual{H} \theta,\vartheta_\beta})\) can be read off \cite[Figure~1]{zibrowius:twisted},
the factors \(C_{\vartheta_\beta,\beta}\) can be read off \cite[Figure~5]{zibrowius:twisted}, and the final factor \(\innerproduct{\beta}{\beta}/\innerproduct{\theta}{\theta}\) can also be read off the Dynkin diagram of \((G,T)\) (see \cref{table:final-factor}).
\begin{table}
    \centering
    \begin{tabular}{lL}
      \toprule
      path between \(\theta\) and \(\beta\) has \dots                       & \tfrac{\innerproduct{\beta}{\beta}}{\innerproduct{\theta}{\theta}} \\
      \midrule
      only single edges              & 1                                                        \\
      double edge towards  \(\beta\) & \tfrac{1}{2}                                             \\
      triple edge towards \(\beta\)  & \tfrac{1}{3}                                             \\
      double edge towards \(\theta\) & 2                                                        \\
      triple edge towards \(\theta\) & 3                                                        \\
      \bottomrule
    \end{tabular}
    \caption{The final factor: consider the (unique) shortest path between \(\theta\) and \(\beta\) in the Dynkin diagram.}
    \label{table:final-factor}
  \end{table}

  We now systematically analyse the different cases listed in \cref{table:subsets} (\(G\) of classical type):

  %% A_n %%
  %%
  %% coefficients of positive roots:
  %% 0...0 1...1 0...0
  \newcommand{\proofcase}[3]{\paragraph{\(#1 \supset #2\) \ifstrempty{#3}{}{\; \(\scriptstyle (#3)\)}}~\\}

  \proofcase{A_n}{A_k}{2<k<n}%%%%%%%%%%%%%%%%%%%%
  There is only one neighbour \(\beta\), and \(\Delta_\gamma(\theta) = a_\theta + a_{\dual{H}\theta} - b_\beta\).
  All coefficients of all positive roots \(\gamma\) are \(0\) or \(1\).   If \(b_\beta = 0\), then \(\Delta_\gamma(\theta) \geq 0\) for all \(\theta\in\simpleroots{H}\). So suppose now that \(b_\beta = 1\).  If there exists no \(\theta\in\simpleroots{H}\) such that both \(a_\theta = 1\) and \(a_{\dual{H}\theta} = 1\), then clearly \(\Delta_\gamma(\theta) \leq 0\) for all \(\theta\in\simpleroots{H}\). If there exists at least one \(\theta\in \simpleroots{H}\) such that \(a_\theta = 1\) and \(a_{\dual{H}\theta} = 1\), then it follows from the structure of the positive roots of \(A_n\) that for \emph{any} \(\theta\in\simpleroots{H}\) at least one of the coefficients \(a_\theta\), \(a_{\dual{H}\theta}\) is \(1\).  In this case, \(\Delta_\gamma(\theta) \geq 0\) for all \(\theta\in\simpleroots{H}\).

  %% B_n %%
  %%
  %% coefficients of positive roots:
  %% 0...0 11......11
  %% 0...0 1...1 0..0
  %% 0...0 1...1 2..2

  \proofcase{B_n}{A_k}{2 < k < n}%%%%%%%%%%%%%%%%%%%%
  There is only one neighbour \(\beta\), and \(\Delta_\gamma(\theta) = a_{\theta} + a_{\dual{H}\theta} - b_\beta\).  (This is true both in the case \(k < n-1\) and in the case \(k = n-1\), which are listed separately in \cref{table:subsets}.)
  All coefficients of all positive roots \(\gamma\) are \(0\), \(1\) or \(2\).
  If \(b_\beta = 0\) or \(b_\beta = 1\), we can argue exactly as in the case \(A_n \supset A_k\).
  Suppose now that \(b_\beta = 2\).  If there exists no \(\theta\in\simpleroots{H}\) such that \(a_\theta + a_{\dual{H}\theta} \geq 3\), then \(\Delta_\gamma(\theta) \leq 0\) for all \(\theta \in\simpleroots{H}\).  Otherwise, if there exists at least one \(\theta\) such that \(a_\theta + a_{\dual{H}\theta} \geq 3\), then there exists some \(\theta\) such that  \(a_\theta \geq 1\) and \(a_{\dual{H}\theta} = 2\), and it follows from the structure of the positive roots of \(B_n\) that in this case \(a_\theta + a_{\dual{H}\theta} \geq 2\) for all \(\theta\in\simpleroots{H}\).  Thus, in this case \(\Delta_\gamma(\theta) \geq 0\) for all \(\theta\in\simpleroots{H}\).

  \proofcase{B_n}{B_k}{1 < k < n}%%%%%%%%%%%%%%%%%%%%
  There is only one neighbour \(\beta\), and \(\Delta_\gamma(\theta) = 2a_\theta - 2b_\beta\).
  As remarked in the previous case, all coefficients of all positive roots \(\gamma\) are \(0\), \(1\) or \(2\).
  If \(b_\beta = 0\), then \(\Delta_\gamma(\theta) \geq 0\) for all \(\theta \in \simpleroots{H}\).
  If \(b_\beta = 2\), then also \(a_\theta = 2\) for all \(\theta \in \simpleroots{H}\), so \(\Delta_\gamma(\theta) \geq 0\).
  Finally, suppose \(b_\beta = 1\). If there exists no \(\theta \in \simpleroots{H}\) such that \(a_\theta = 2\), then \(\Delta_\gamma(\theta) \leq 0\) for all \(\theta\in\simpleroots{H}\). If there is a \(\theta \in\simpleroots{H}\) such that \(a_\theta = 2\), then it follows from the structure of positive roots of \(B_n\) that \(a_\theta \geq 1\) for all \(\theta\in\simpleroots{H}\), and hence that \(\Delta_\gamma(\theta) \geq 0\) for all \(\theta\in\simpleroots{H}\).
  %% <-- Above argument formulated in such a way that it simultaneously applies to \(C_n \supset A_k\) and \(D_n \supset A_k\).

  %% C_n %%
  %%
  %% coefficients of positive roots:
  %% 0...0 1...1 0.....0
  %% 0...0 1...1 2...2 1
  %% 0.......0 2.....2 1

  \proofcase{C_n}{A_k}{2 < k < n-1}%%%%%%%%%%%%%%%%%%%%
  % There is only one neighbour \(\beta\), and \(\Delta_\gamma(\theta) = a_\theta + a_{\dual\theta} - b_\beta\), and
  We can argue exactly as in the case \(B_n\supset A_k\).
  % The possible coefficients are \(0\), \(1\) and \(2\).  When \(b_\beta = 0\) or \(b_\beta = 1\), we can argue exactly as in the case \(A_n\supset A_k\).
  % ++The argument for \(b_\beta = 2\) is identical to the argument for \(b_\beta = 1\), except that all numerical values need to be scaled by the factor~\(2\). [<-- This is not correct, but argument for \(B_n \supset A_k\) works.]

  \proofcase{C_n}{A_{n-1}}{}%%%%%%%%%%%%%%%%%%%%
  There is only one neighbour \(\beta\), and  \(\Delta_\gamma(\theta) = a_\theta + a_{\dual\theta} - 2b_\beta\).
  The coefficient \(b_\beta\) is either \(0\) or \(1\).
  When \(b_\beta = 0\), \(\Delta_\gamma(\theta) \geq 0\).  So suppose \(b_\beta = 1\).  If \(a_{\theta_{n-1}} < 2\), then \(a_\theta + a_{\dual{H}\theta} \leq 2\) for all \(\theta\in\simpleroots{H}\), so \(\Delta_\gamma(\theta) \leq 0\) for all \(\theta\).
  If \(a_{\theta_{n-1}} = 2\), we distinguish two further cases.
  In case there exists some \(\theta\in\simpleroots{H}\) such that \(a_\theta =2\) and \(a_{\dual{H}\theta} = 2\), we find \(a_\theta + a_{\dual{H}\theta} \geq 2\) for all \(\theta\in\simpleroots{H}\), so \(\Delta_\gamma(\theta)\geq 0\) for all \(\theta\).
  In case there exists no \(\theta\in\simpleroots{H}\) such that \(a_\theta =2\), we find \(a_\theta + a_{\dual{H}\theta} \leq 2\) for all \(\theta\in\simpleroots{H}\), so \(\Delta_\gamma(\theta)\leq 0\) for all \(\theta\).

  \proofcase{C_n}{C_k}{1<k<n}%%%%%%%%%%%%%%%%%%%%
  There is only one neighbour \(\beta\).
  \begin{align*}
    \Delta_\gamma(\theta_i) &= 2a_{\theta_i} - 2b_\beta \quad \text{ for } i\neq k\\
    \Delta_\gamma(\theta_k) &= 2a_{\theta_k} - b_\beta
  \end{align*}
  The possible coefficients for \(\beta\) are \(0\), \(1\) and \(2\).
  When \(b_\beta = 0\), \(\Delta_\gamma(\theta) \geq 0\).
  Suppose \(b_\beta = 1\).  If \(a_\theta = 0\) for some \(\theta\in\simpleroots{H}\), then \(a_{\theta_i} \leq 1\) for all \(i<k\) and \(a_{\theta_k}=0\).  It follows in this case that \(\Delta_\gamma(\theta) \leq 0\) for all \(\theta\in\simpleroots{H}\).
  If on the other hand \(a_{\theta_i} \neq 0\) for all \(\theta\in\simpleroots{H}\), then \(\Delta_\gamma(\theta)\geq 0\) for all \(\theta\in\simpleroots{H}\).
  Finally, suppose \(b_\beta = 2\). In this case, \(a_{\theta_i} = 2\) for all \(i<k\) and \(a_{\theta_k}=1\), so we find that \(\Delta_\gamma(\theta) = 0\) for all \(\theta\in\simpleroots{H}\).

  %% D_n %%
  %%
  %% coefficients of positive roots:
  %% 0...0 1...1 0...0 0 (last entry is 0; includes ...1|1|0  but not ...1|0|1 !!)
  %% 0...0 1..........1 1 1 (last two entries are 1)
  %% 0...0 1...1 2....2 1 1 (last two entries are 1)

  \proofcase{D_n}{A_k}{2<k<n-2}%%%%%%%%%%%%%%%%%%%%
  %There is only one neighbour \(\beta\), and \(\Delta_\gamma(\theta) = a_\theta + a_{\dual\theta} - b_\beta\).
  We can argue exactly as in the case \(B_n \supset A_k\).

  \proofcase{D_n}{A_{n-2}}{}%%%%%%%%%%%%%%%%%%%%
  There are two neighbours \(\beta_1\) and \(\beta_2\), and \(\Delta_\gamma(\theta) = a_\theta + a_{\dual{H}\theta} - b_{\beta_1} - b_{\beta_2}\).  If at least one of \(\beta_{\beta_1}\), \(\beta_{\beta_2}\) is zero, we can argue as in the case \(A_n\supset A_k\).
  Now suppose \(b_{\beta_1} = b_{\beta_2} = 1\).
  If there exists some \(\theta\in\simpleroots{H}\) such that \(a_{\theta} + a_{\dual{H}\theta} \geq 3\), then we find that \(a_{\theta} + a_{\dual{H}\theta} \geq 2\) for all \(\theta \in\simpleroots{H}\), and hence that \(\Delta_\gamma(\theta)\geq 0\) for all \(\theta \in \simpleroots{H}\).  If on the other hand \(a_{\theta} + a_{\dual{H}\theta} \geq 2\) for all \(\theta\in\simpleroots{H}\), then \(\Delta_\gamma(\theta)\leq 2\) for all \(\theta \in \simpleroots{H}\).

  \proofcase{D_n}{A_{n-1}}{}%%%%%%%%%%%%%%%%%%%%
  There is only one neighbour \(\beta\).
  \begin{align*}
    \Delta_\gamma(\theta_i) &= a_{\theta_i} + a_{\theta_{n-i}} - 2b_\beta  \quad \text{ for } 1 < i < n-1\\
    \Delta_\gamma(\theta_1) = \Delta_\gamma(\theta_{n-1}) &= a_{\theta_1} + a_{\theta_{n-1}} - b_\beta
  \end{align*}
  %Note that the values of \(b_\beta\) and of \(a_{\theta_{n-1}\) can only be \(0\) or \(1\).
  It suffices to consider the case \(b_\beta = 1\). If \(a_{\theta_{n-1}} = 0\),
  then \(a_{\theta} \leq 1\) for all \(\theta\in\simpleroots{H}\), hence \(\Delta_\gamma(\theta) \leq 0\) for all \(\theta\in\simpleroots{H}\).
  If \(a_{\theta_{n-1}} = 1\) and \(a_{\theta_1} =1\), then we find that \(\Delta_\gamma(\theta_1) > 0\) and \(\Delta_\gamma(\theta_i)\geq 0\) for \(1<i<n-1\).
  If \(a_{\theta_{n-1}} = 1\) and \(a_{\theta_1} = 0\), then \(\Delta_\gamma(\theta_1) = 0\) and we need to distinguish two further cases:
  either there exists some \(i\) with \(1 < i < n-1\) and \(a_{\theta_i} + a_{\theta_{n-i}} > 2\) or \(a_{\theta_i} + a_{\theta_{n-i}} \leq 2\) for all \(i\).
  In the first case, it follows that \(a_{\theta_i}+a_{\theta_{n-i}} \geq 2\) for all \(i\) with \(1<i<n-1\), and hence that \(\Theta_\gamma(\theta) \geq 0\) for all \(\theta\in\simpleroots{H}\).  In the second case, it follows that \(\Delta_\gamma(\theta)\leq 0\) for all \(\theta\in\simpleroots{H}\).

  \proofcase{D_n}{A_3}{}%%%%%%%%%%%%%%%%%%%%
  %There is only one neighbour \(\beta\).
  %\begin{align*}
  %  \Delta_\gamma(\theta_2) &= 2 a_{\theta_2} - 2b_\beta \\
  %  \Delta_\gamma(\theta_1) = \Delta_\gamma(\theta_3) &= a_{\theta_1} + a_{\theta_3} - b_\beta
  %\end{align*}
  This is left as an easy exercise. %% I've done it.

  \proofcase{D_n}{D_k}{3<k<n}%%%%%%%%%%%%%%%%%%%%
  There is only one neighbour \(\beta\).  For \(\theta\in \{\theta_1,\dots,\theta_{k-1}\}\),
  \begin{align*}
    \Delta_\gamma(\theta) &= 2a_\theta - 2b_\beta\\
    \intertext{For \(\theta\in \{\theta_{k-1},\theta_k\}\),}
    \Delta_\gamma(\theta)
    &= \begin{cases} a_{\theta_{k-1}} + a_{\theta_k} - b_\beta &\text{ if \(k\) is odd}\\2a_{\theta} - b_\beta &\text{ if \(k\) is even} \end{cases}
  \end{align*}
  When \(k\) is even, there is a positive root \(\gamma = \beta + \theta_1 + \theta_2 + \cdots + \theta_{k-1}\) for which \(\Delta_\gamma(\theta_{k-1}) = 1\) but \(\Delta_\gamma(\theta_k) = -1\). Thus, the criterion of \cref{lem:numerical-criterion} fails for even \(k\), which is why this case is excluded in \cref{thm:main}.

  When \(k\) is odd, on the other hand, the criterion is satisfied:
  If \(\gamma\) is a positive root with \(a_{\theta_{k-1}} = a_{\theta_k} = 1\), we find \(\Delta_\gamma(\theta) \geq 0\) for all \(\theta\in\simpleroots{H}\).  For the remaining positive roots, at most one of the two coefficients \(a_{\theta_{k-1}},a_{\theta_k}\) is \(1\) and the other one or two coefficients are \(0\), and we find that \(\Delta_\gamma(\theta) \leq 0\) for all \(\theta\in\simpleroots{H}\).

  This completes the proof for \(G\) of classical type.  The cases listed in \cref{table:results-connected}, where \(G\) is of exceptional type, could be analysed similarly.  We will not bore the reader with this rather tedious analysis, but rather refer to our implementation in Macaulay~2 \cite{code} to complete the proof in these finitely many remaining cases.
\end{proof}

When \(\simpleroots{H}\) has two or more components, condition~\ref{cond:singlecell} typically fails, though we have not investigated this systematically.

\subsection{Condition~\ref{cond:singlecell} implies condition~\ref{cond:orbits}}
\label{sec:singlecell-orbits}
\begin{lem}
  Consider \(I\subset \simpleroots{}\) such that \(\dual{} I = I\). Then \(\cellclosureRR{I}^\dual{}\cap\weightspaceZZ\) is a free abelian monoid with the following basis:
  one basis element \(\omega_\alpha\) for each \(\dual{}\)-self-dual \(\alpha\in\simpleroots{}\setminus I\), and one basis element \(\omega_\alpha+\dual{}\omega_\alpha = \omega_\alpha + \omega_{\dual{}\alpha}\) for each pair of \(\dual{}\)-dual simple roots \(\{\alpha,\dual{}\alpha\}\subset \simpleroots{}\setminus I\) with \(\alpha\neq\dual{}\alpha\).
\end{lem}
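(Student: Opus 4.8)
The plan is to reduce the statement to a purely monoid-theoretic fact: the fixed submonoid of a free abelian monoid under an automorphism that merely permutes a distinguished basis is itself free, on the orbit sums. So the work is entirely in producing the right free basis of $\cellclosureRR{I}\cap\weightspaceZZ$ before passing to $\dual{}$-fixed points.

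First I would describe $\cellclosureRR{I}\cap\weightspaceZZ$ explicitly. Since $G$ is simply-connected, the monoid of dominant weights $\closedWeylchamberRR{}\cap\weightspaceZZ$ is free on the fundamental weights $\{\omega_\alpha\mid\alpha\in\simpleroots{}\}$, and a dominant weight $\tau=\sum_{\alpha}n_\alpha\omega_\alpha$ (with $n_\alpha\in\NN_0$) has $\pairing{\alpha^\vee}{\tau}=n_\alpha$ for each $\alpha\in\simpleroots{}$. By the description of the cells of $\closedWeylchamberRR{}$ in \cite[\S\,5.2, Remark~2]{kane}, the closed face $\cellclosureRR{I}$ is cut out inside $\closedWeylchamberRR{}$ by the equations $\pairing{\alpha^\vee}{-}=0$ for $\alpha\in I$. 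Combining these, $\tau\in\cellclosureRR{I}\cap\weightspaceZZ$ if and only if $n_\alpha=0$ for all $\alpha\in I$; that is, $\cellclosureRR{I}\cap\weightspaceZZ$ is the free abelian monoid on $\{\omega_\alpha\mid\alpha\in\simpleroots{}\setminus I\}$.

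Next I would pass to $\dual{}$-fixed points. Since $\dual{}$ preserves $\weightspaceZZ$ and preserves $\cellclosureRR{I}$ (the latter because $\dual{}\closedWeylchamberRR{}=\closedWeylchamberRR{}$ and $\dual{}I=I$), we have $\cellclosureRR{I}^\dual{}\cap\weightspaceZZ=(\cellclosureRR{I}\cap\weightspaceZZ)^\dual{}$. On the free monoid above, $\dual{}$ acts by $\omega_\alpha\mapsto\omega_{\dual{}\alpha}$, and because $\dual{}I=I$ the set $\simpleroots{}\setminus I$ is $\dual{}$-stable, so $\dual{}$ permutes the chosen basis. Now I invoke the elementary fact: if a free abelian monoid $M$ has a basis $B$ permuted by an automorphism $\sigma$, then $M^\sigma$ is free on the orbit sums $\{\sum_{b\in O}b\mid O\in B/\sigma\}$ — an element $\sum_{b\in B}n_b b$ is $\sigma$-fixed exactly when $b\mapsto n_b$ is constant on $\sigma$-orbits, i.e.\ exactly when it is an $\NN_0$-combination of orbit sums, and the orbit sums are independent because they have disjoint supports in $B$. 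Applying this with $\sigma=\dual{}$ (an involution, so orbits have size $1$ or $2$) turns the orbit sums into precisely $\omega_\alpha$ for $\dual{}$-self-dual $\alpha\in\simpleroots{}\setminus I$ and $\omega_\alpha+\omega_{\dual{}\alpha}$ for the two-element orbits $\{\alpha,\dual{}\alpha\}\subset\simpleroots{}\setminus I$, which is exactly the claimed basis.

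I do not expect a genuine obstacle here: the statement is a formal consequence of the combinatorics of fundamental weights and cells. The only point requiring mild care is the bookkeeping in the first step — pinning down that $\cellclosureRR{I}$ denotes the face on which exactly the walls indexed by $I$ vanish (and its lower faces), and hence that its lattice points are the dominant weights orthogonal to $\{\alpha^\vee\mid\alpha\in I\}$ — after which the argument is pure invariant theory of a permutation action on a free monoid.
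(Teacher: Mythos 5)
Your proposal is correct and follows essentially the same route as the paper: identify $\cellclosureRR{I}\cap\weightspaceZZ$ as the free submonoid of the dominant weights on $\{\omega_\alpha \mid \alpha\in\simpleroots{}\setminus I\}$, note that $\dual{}$ permutes this basis via $\dual{}\omega_\alpha=\omega_{\dual{}\alpha}$, and conclude that the fixed submonoid is free on the orbit sums. The only difference is that you spell out the elementary orbit-sum fact that the paper leaves implicit in ``so the claim follows.''
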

\begin{proof}
  As \(G\) is simply-connected, \(\closedWeylchamberRR{}\cap\weightspaceZZ\) is a free abelian monoid on a basis given by the fundamental weights \(\omega_\alpha\) (for \(\alpha\in\simpleroots{}\)) \cite[Theorem~5.62]{adams}.  It follows that, more generally, \(\cellclosureRR{I}\cap\weightspaceZZ\) is a free abelian monoid on a basis given by the fundamental weights \(\omega_\alpha\) with \(\alpha\in\simpleroots{}\setminus I\).
  The involution \(\dual{}\) permutes this basis by the formula \(\dual{}\omega_\alpha = \omega_{\dual{}\alpha}\) for each \(\alpha\in\simpleroots{}\), so the claim follows.
\end{proof}

Assume now that condition~\ref{cond:singlecell} holds, so that \(\closedWeylchamberRR{H}^\dual{H} = w.(\cellclosureRR{I}^\dual{})\) for some \(w\in\Weyl{}\) and some \(I\subset \simpleroots{}\) with \(\dual{} I = I\).
Then, by the above lemma, \(\closedWeylchamberRR{H}^\dual{H}\cap\weightspaceZZ\) has a basis consisting of one element \(w.\omega_\alpha\) for each \(\dual{}\)-self-dual root \(\alpha\in \simpleroots{}\setminus I\) and one element \(w.(\omega_\alpha + \omega_{\dual{}\alpha})\) for each pair of \(\dual{}\)-dual roots in \(\simpleroots{}\setminus I\).  So \ref{cond:orbits} is satisfied.

\subsection{Condition~\ref{cond:orbits} implies condition~\ref{cond:ext}}
\label{sec:orbits-ext}

\begin{lem}\label{lem:symsum{}-decomposition}
  Consider the natural map \(\h^+(\R G)\to \h^+(\R(\levisubgroup{H}))\) induced by restriction.
  For \(\omega\in\closedWeylchamberRR{}\cap\weightspaceZZ\), the class of the symmetric sum \([\symsum{}(\omega)]\) decomposes in \(\h^+(\R(\levisubgroup{H}))\) as a sum of classes of symmetric sums \([\symsum{H}(\tau)]\) indexed by the intersection points \(\tau\) of the Weyl orbit \(\Weyl{}.\omega\) with \(\closedWeylchamberRR{H}^\dual{H}\).
  In particular, in \(\h^+(\R(\levisubgroup{H}))\),
  \[
    [\symsum{}(\omega)]=
    \begin{cases}
      [\symsum{H}(\tau)] & \text{ if } \Weyl{}.\omega\cap \closedWeylchamberRR{H}^\dual{H} = \{\tau\}\\
      0 & \text{ if } \Weyl{}.\omega\cap \closedWeylchamberRR{H}^\dual{H} = \emptyset
    \end{cases}
  \]
\end{lem}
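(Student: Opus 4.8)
The plan is to trace the element $\symsum{}(\omega)$ through the inclusion $\R G\hookrightarrow\R(\levisubgroup{H})$ and then through the passage to Tate cohomology, using nothing beyond the decomposition of the Weyl orbit $\Weyl{}.\omega$ into $\Weyl{H}$-orbits together with \cref{prop:Tate-of-H}. First I would recall that restriction $\R G\to\R(\levisubgroup{H})$ is just the inclusion of $\Weyl{}$-invariants into $\Weyl{H}$-invariants inside $\R T\cong\ZZ[\weightspaceZZ]$, and that it intertwines the duality involutions: since $\longest{H}\in\Weyl{H}\subset\Weyl{}$ acts trivially on $\R G$, the involution $\dual{H}=-\longest{H}$ restricts on $\R G$ to dualization, which there coincides with $\dual{}=-\longest{}$; hence the induced map $\h^+(\R G)\to\h^+(\R(\levisubgroup{H}))$ is well defined. (In particular $[\symsum{}(\omega)]$ is defined only when $\symsum{}(\omega)$ is self-dual, i.e.\ when $\dual{}\omega=\omega$; in all our applications $\omega$ is $\omega_\alpha$ for a $\dual{}$-self-dual $\alpha$, or $\omega_\beta+\omega_{\dual{}\beta}$, so this is no restriction.) Inside $\R(\levisubgroup{H})$, whose $\ZZ$-basis is $\{\symsum{H}(\mu)\mid\mu\in\closedWeylchamberRR{H}\cap\weightspaceZZ\}$, I would rewrite $\symsum{}(\omega)=\sum_{\nu\in\Weyl{}.\omega}e^{2\pi i\nu}$ by partitioning the finite set $\Weyl{}.\omega$ into its $\Weyl{H}$-orbits; since the closed Weyl chamber $\closedWeylchamberRR{H}$ meets each $\Weyl{H}$-orbit in exactly one point, this yields $\symsum{}(\omega)=\sum_{\mu\in\Weyl{}.\omega\cap\closedWeylchamberRR{H}}\symsum{H}(\mu)$ in $\R(\levisubgroup{H})$.

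The remaining step is to pass to $\h^+(\R(\levisubgroup{H}))$. The involution $\dual{H}$ permutes the basis above by $\symsum{H}(\mu)\mapsto\symsum{H}(\dual{H}\mu)$, and because $\dual{}\omega=\omega$ it preserves the finite set $S:=\Weyl{}.\omega\cap\closedWeylchamberRR{H}$ (indeed $\dual{H}(\Weyl{}.\omega)=-\longest{H}(\Weyl{}.\omega)=\Weyl{}.(-\omega)=\Weyl{}.\omega$, and $\dual{H}\closedWeylchamberRR{H}=\closedWeylchamberRR{H}$ since $\longest{H}$ sends dominant to anti-dominant weights for $\levisubgroup{H}$). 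The $\dual{H}$-fixed points of $S$ are exactly the weights in $\Weyl{}.\omega\cap\closedWeylchamberRR{H}^\dual{H}$, while the remaining elements of $S$ occur in $\dual{H}$-pairs $\{\mu,\dual{H}\mu\}$, each contributing $\symsum{H}(\mu)+\symsum{H}(\dual{H}\mu)=(1+\dual{H})\symsum{H}(\mu)$, which lies in the image of $1+\dual{H}$ and hence vanishes in $\h^+$ — this is precisely the cancellation already used in the proof of \cref{prop:Tate-of-H}. Therefore $[\symsum{}(\omega)]=\sum_{\tau\in\Weyl{}.\omega\cap\closedWeylchamberRR{H}^\dual{H}}[\symsum{H}(\tau)]$ in $\h^+(\R(\levisubgroup{H}))$, a sum of pairwise distinct basis classes from \cref{prop:Tate-of-H}; the two displayed special cases (a single intersection point, or none) follow immediately.

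I do not expect a genuine obstacle here. The only point that needs care is the compatibility of the two duality involutions — that $\dual{H}$ on $\R(\levisubgroup{H})$ restricts to $\dual{}$ on $\R G$ — which is what makes the orbit decomposition $\dual{H}$-equivariant and thereby legitimizes the ``pairs cancel in Tate cohomology'' bookkeeping; once that is in place, everything else is the routine decomposition of a $\Weyl{}$-orbit into $\Weyl{H}$-orbits.
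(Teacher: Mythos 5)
Your proof is correct and is essentially the paper's own argument: decompose $\symsum{}(\omega)$ as $\sum_{\tau\in\Weyl{}.\omega\cap\closedWeylchamberRR{H}}\symsum{H}(\tau)$ using that $\closedWeylchamberRR{H}$ is a fundamental domain for $\Weyl{H}$, then note that in Tate cohomology the non-$\dual{H}$-fixed terms cancel in dual pairs. The extra care you take with the compatibility of $\dual{}$ and $\dual{H}$ (and the implicit self-duality of $\omega$) is left tacit in the paper but is consistent with how the lemma is applied.
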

\begin{proof}
  As \(\closedWeylchamberRR{H}\) is a fundamental domain for the action of \(\Weyl{H}\) on \(\weightspaceRR\) (\cite[Corollary~5.16]{adams}), the Weyl orbit \(\Weyl{}.\omega\) decomposes as a disjoint union over the orbits \(\Weyl{H}.\tau\) for the distinct elements \(\tau\in\Weyl{}.\omega\cap\closedWeylchamberRR{}\).
  This implies that,  for any \(\omega\in\closedWeylchamberRR{}\),
  \(
  \symsum{}(\omega)=\textstyle\sum_{\tau\in \Weyl{}.\omega\cap \closedWeylchamberRR{H}} \symsum{H}(\tau)
  \).
  In Tate cohomology, we see only those symmetric sums \(\symsum{H}(\tau)\) with \(\tau\in\closedWeylchamberRR{H}^\dual{H}\); the other symmetric sums occur in dual pairs and disappear.
\end{proof}

\begin{prop}
  \label{prop:Tate-of-RH-under-orbit-condition}
   If condition~\ref{cond:orbits} holds with respect to \(I\subset \simpleroots{}\), then \(\h^*(\R(\levisubgroup{H}))\) is freely generated as a \(\ZZII\)-graded \(\ZZII\)-algebra by the following elements:
  \begin{alignat*}{7}
      &i^*[\rsymsum{}(\omega_\alpha)] &&\text{ for each \(\alpha\in\simpleroots{}\setminus I\) with \(\dual{}\alpha = \alpha\), and}\\
      &i^*[\rsymsum{}(\omega_\beta + \omega_{\dual{}\beta})] &&\text{ for each \(\beta\in\simpleroots{}\setminus I\) with \(\dual{}\beta\neq \beta\)}.
    \end{alignat*}
\end{prop}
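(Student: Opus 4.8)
The plan is to read this off from \cref{lemmatatecohfreemonoid} and \cref{lem:symsum{}-decomposition}. Condition~\ref{cond:orbits} asserts precisely that the free abelian monoid \(\closedWeylchamberRR{H}^\dual{H}\cap\weightspaceZZ\) has, as a basis, the weights \(\tau_{[\alpha]}\) indexed by the \(\dual{}\)-orbits \([\alpha]\subset\simpleroots{}\setminus I\). Hence \cref{lemmatatecohfreemonoid} applies directly and shows that \(\h^*(\R(\levisubgroup{H}))\) is freely generated as a \(\ZZII\)-graded \(\ZZII\)-algebra by the classes \([\symsum{H}(\tau_{[\alpha]})]\), one for each such orbit. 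It therefore remains to identify each of these generators, up to an irrelevant constant, with the restricted class \(i^*[\rsymsum{}(\omega_{[\alpha]})]\), where \(\omega_{[\alpha]}\) is the \(\dual{}\)-self-dual dominant weight \(\omega_\alpha\) (for \(\dual{}\alpha=\alpha\)) or \(\omega_\alpha+\omega_{\dual{}\alpha}\) (for \(\dual{}\alpha\neq\alpha\)) from condition~\ref{cond:orbits}.

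First I would observe that, since \(\omega_{[\alpha]}\) is \(\dual{}\)-self-dual, the symmetric sum \(\symsum{}(\omega_{[\alpha]})\) is \(\dual{}\)-invariant in \(\R G\) and so defines a class in \(\h^+(\R G)\); moreover, because \(\longest{H}\in\Weyl{}\) fixes \(\symsum{}(\omega_{[\alpha]})\), this element remains \(\dual{H}\)-invariant in \(\R(\levisubgroup{H})\), so that \(i^*\) of its Tate class is defined. Then I would apply \cref{lem:symsum{}-decomposition} with \(\omega=\omega_{[\alpha]}\): by condition~\ref{cond:orbits} the Weyl orbit \(\Weyl{}.\omega_{[\alpha]}\) meets \(\closedWeylchamberRR{H}^\dual{H}\) in the single weight \(\tau_{[\alpha]}\), so the ``in particular'' case of the lemma yields
\[
  i^*\bigl[\symsum{}(\omega_{[\alpha]})\bigr] = \bigl[\symsum{H}(\tau_{[\alpha]})\bigr] \qquad\text{in } \h^+(\R(\levisubgroup{H})).
\]
Combining this with the first paragraph, the restricted classes \(i^*[\symsum{}(\omega_{[\alpha]})]\) already constitute a free generating set of \(\h^*(\R(\levisubgroup{H}))\).

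Finally I would pass from \(\symsum{}\) to \(\rsymsum{}\). Since \(\rsymsum{}(\omega_{[\alpha]})=\symsum{}(\omega_{[\alpha]})-\rank(\symsum{}(\omega_{[\alpha]}))\), the two restricted Tate classes differ only by the constant \(c_{[\alpha]}:=\rank(\symsum{}(\omega_{[\alpha]}))\cdot[1]\in\ZZII\cdot[1]\subset\h^+(\R(\levisubgroup{H}))\). Under the polynomial isomorphism supplied by \cref{lemmatatecohfreemonoid}, which by the previous step sends a polynomial variable \(x_{[\alpha]}\) to \(i^*[\symsum{}(\omega_{[\alpha]})]\), the elements \(i^*[\rsymsum{}(\omega_{[\alpha]})]\) correspond to \(x_{[\alpha]}+c_{[\alpha]}\), which again form a free set of polynomial generators; hence the classes \(i^*[\rsymsum{}(\omega_{[\alpha]})]\) freely generate \(\h^*(\R(\levisubgroup{H}))\), as claimed.

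The argument is short because the structural content has been isolated in \cref{lemmatatecohfreemonoid} and \cref{lem:symsum{}-decomposition}, and because \cref{sec:simplify-setup} has already arranged matters so that we may work with the \(\dual{}\)-self-dual combinations \(\omega_\beta+\omega_{\dual{}\beta}\) rather than with \(\omega_\beta\) and \(\omega_{\dual{}\beta}\) separately. The two points that still need care are the compatibility of \(\dual{}\) and \(\dual{H}\) under restriction --- this is what makes \(\symsum{}(\omega_{[\alpha]})\) define a Tate class at all --- and the bookkeeping needed to replace full symmetric sums by their reduced variants; routing the generation statement through the non-reduced sums, as above, avoids having to verify that the constant correction term is \(\lessdominant{H}\)-smaller than \(\tau_{[\alpha]}\), which in fact is not literally true in general.
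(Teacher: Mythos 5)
Your argument is correct and follows essentially the same route as the paper: both proofs feed \cref{lem:symsum{}-decomposition} into \cref{lemmatatecohfreemonoid}. The only difference is the last step: the paper applies the second part of \cref{lemmatatecohfreemonoid} directly to the classes \(i^*[\rsymsum{}(\omega_{[\alpha]})]=[\rsymsum{H}(\tau_{[\alpha]})]\), whereas you first generate with the unreduced sums and then absorb the rank constants via the substitution \(x_{[\alpha]}\mapsto x_{[\alpha]}+c_{[\alpha]}\). That detour is harmless but unnecessary: for a nonzero weight \(\omega\) with \(\dual{}\omega=\omega\), the orbit \(\Weyl{}.\omega\) is stable under negation and contains no negation-fixed point, so \(\rank(\symsum{}(\omega))=\cardinality{\Weyl{}.\omega}\) is even, and likewise \(\cardinality{\Weyl{H}.\tau}\) is even for every nonzero \(\dual{H}\)-fixed \(\tau\); hence every constant \(c_{[\alpha]}\) vanishes in \(\ZZII\) and \(i^*[\rsymsum{}(\omega_{[\alpha]})]=i^*[\symsum{}(\omega_{[\alpha]})]=[\symsum{H}(\tau_{[\alpha]})]\) on the nose. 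In particular, your closing concern --- that the constant correction is not \(\lessdominant{H}\)-smaller than \(\tau_{[\alpha]}\) --- is a fair reading of the hypotheses of \cref{lemmatatecohfreemonoid} as literally stated, but it is moot here, since no constant term actually appears.
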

\begin{proof}
   Assuming condition~\ref{cond:orbits}, \cref{lem:symsum{}-decomposition} shows that the elements specified in the \namecref{prop:Tate-of-RH-under-orbit-condition} are of the form \([\rsymsum{H}(\tau_1)],\dots,[\rsymsum{H}(\tau_a)]\) for a basis of \(\closedWeylchamberRR{H}^\dual{H}\cap\weightspaceZZ\).  Thus, the claim is immediate from \cref{lemmatatecohfreemonoid}.
\end{proof}

\begin{remark}
  \label{rem:Tate-of-RH-under-singlecell}
  Under the stronger condition \ref{cond:singlecell}\(_I\), we can also say something about the remaining generators of \(\R(G)\):
  \begin{alignat*}{7}
    &i^*[\rsymsum{}(\omega_\alpha)] &= 0 &\text{ for each \(\alpha\in I\) with \(\dual{}\alpha = \alpha\), and} \\
    &i^*[\rsymsum{}(\omega_\beta+\omega_{\dual{}\beta})] &= 0 & \text{  for each \(\beta\in I\) with \(\dual{}\beta\neq\beta\)}
  \end{alignat*}
  in \(\h^*(\R(\levisubgroup{H}))\).  Indeed, the weights \(\omega_\alpha\) and \(\omega_\beta + \omega_{\dual{}\beta}\) with \(\alpha,\beta \in I\) do not lie in \(\cellclosureRR{I}\), and hence their Weyl orbits cannot intersect \(\closedWeylchamberRR{H}^\dual{H} = w(\cellclosureRR{I}^\dual{})\).  We may therefore again conclude by applying \Cref{lem:symsum{}-decomposition}.  (Note that \([\symsum{}(\omega)] = 0 \) implies, a fortiori, that \([\rsymsum{}(\omega)] = 0\).)
\end{remark}

The following \namecref{prop:key-Tate-argument} now indicates how to prove \cref{thm:main}:
\begin{lem}\label{prop:key-Tate-argument}
  Consider a ring with involution \((R,\circ)\) together with an ideal \(\mathfrak g\) that is preserved by the involution.  Suppose that \(\h^-(R,\circ) = 0\), and that \(\ideal g\) has the following structure:
  \begin{compactenum}
  \item The ideal \(\mathfrak g\) is generated by self-dual elements  \(\tau_1,\dots,\tau_a,\lambda_1,\dots,\lambda_b\) that form a regular sequence in \(R\).
  \item The classes \([\tau_1],\dots,[\tau_a]\) form a regular sequence in \(\h^+(R)\).
  \item Each of the classes \([\lambda_i]\) is contained in the ideal in \(\h^+(R)\) generated by the classes \([\tau_1],\dots,[\tau_a]\).
  \end{compactenum}
  Then the canonical ring homomorphism \(\h^*(R/(\tau_1,\dots,\tau_a)) \leftarrow \h^*(R)/\left([\tau_1],\dots,[\tau_a]\right)\) is an isomorphism, and we have an isomorphism of graded abelian groups
  \[
    \h^*(R/\ideal g) \cong \left.\h^*(R)\middle/\left([\tau_1],\dots,[\tau_a]\right)\right. \otimes \bigwedge_{i = 1,\dots, b} [u_i]
  \]
  for certain classes \([u_i] \in \h^-(R/\ideal g)\).  In fact, this is an isomorphism of \(\h^*(R/(\tau_1,\dots,\tau_a))\)-modules.
  As representatives \(u_i\), we may pick arbitrary elements \(u_i \in R\) such that \(u_i+u_i^\circ \equiv \lambda_i \mod \mathfrak g\).
\end{lem}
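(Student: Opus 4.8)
The plan is to factor the quotient as \(R \twoheadrightarrow S := R/(\tau_1,\dots,\tau_a) \twoheadrightarrow R/\mathfrak g\) and to analyse each of the two stages by means of the six-term (``hexagon'') exact sequence in Tate cohomology attached to a short exact sequence of \(\ZZII\)-modules. The one computation I need at the outset is this: for a self-dual non-zero-divisor \(x\) in a ring with involution \((T,\circ)\), the short exact sequence of \(\ZZII\)-modules \(0 \to T \xrightarrow{\,\cdot x\,} T \xrightarrow{\,\pi\,} T/(x) \to 0\) induces an exact hexagon on \(\h^{\pm}\) in which the two maps \(\h^{\pm}(T)\to\h^{\pm}(T)\) are multiplication by the class \([x]\in\h^+(T)\), and in which \(\pi_*\) and the connecting map \(\delta\) (of degree \(1\)) are \(\h^*(T)\)-linear. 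Since \(\h^*\) is \(2\)-torsion, signs play no role.

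\emph{Stage 1: dividing out the \(\tau_i\).} I would prove by induction on \(j\), for \(R_j := R/(\tau_1,\dots,\tau_j)\), that \(\h^-(R_j)=0\) and that the canonical map \(\h^*(R)/([\tau_1],\dots,[\tau_j]) \to \h^*(R_j)\) is a ring isomorphism. In the inductive step, hypothesis~(1) makes the image of \(\tau_j\) in \(R_{j-1}\) a non-zero-divisor, while hypothesis~(2), together with the inductive isomorphism \(\h^+(R_{j-1})\cong\h^+(R)/([\tau_1],\dots,[\tau_{j-1}])\), makes its class a non-zero-divisor in \(\h^+(R_{j-1})\). Feeding \(0\to R_{j-1}\xrightarrow{\tau_j}R_{j-1}\to R_j\to 0\) into the hexagon and using \(\h^-(R_{j-1})=0\), injectivity of multiplication by \([\tau_j]\) forces \(\h^-(R_j)=0\) and identifies \(\h^+(R_j)\) with the cokernel of that multiplication, that is, with \(\h^+(R_{j-1})/([\tau_j])\), the isomorphism being induced by the projection. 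The case \(j=a\) is the first assertion of the lemma, and shows \(\h^*(S)=\h^+(S)=\h^+(R)/([\tau_1],\dots,[\tau_a])=:A\), concentrated in even degree.

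\emph{Stage 2: dividing out the \(\lambda_i\).} The decisive consequence of hypothesis~(3) is that each \([\lambda_i]\) maps to \(0\) in \(\h^+(S)=A\), and a fortiori in \(\h^+(S_{j-1})\) for every further quotient \(S_{j-1}:=S/(\bar\lambda_1,\dots,\bar\lambda_{j-1})\); hence the image of \(\lambda_i\) there can be written as \(w_i+w_i^{\circ}\). I would then establish, and iterate, the following building block: if \((T,\circ)\) is a ring with involution and \(y\in T\) is a self-dual non-zero-divisor with \([y]=0\) in \(\h^+(T)\), say \(y=w+w^{\circ}\), then, writing \([u] := [\pi(w)] \in \h^-(T/(y))\), the assignment \(x\otimes 1\mapsto\pi_*(x)\), \(x\otimes[u]\mapsto\pi_*(x)\cdot[u]\) defines an isomorphism of \(\h^*(T)\)-modules \(\h^*(T)\otimes\bigwedge([u]) \xrightarrow{\ \cong\ } \h^*(T/(y))\) (no claim is made about \([u]^2\), which need not vanish). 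Applying this to \(S_0:=S\) and successively to each \(\bar\lambda_j\) (a non-zero-divisor in \(S_{j-1}\) by hypothesis~(1)) yields \(\h^*(R/\mathfrak g)=\h^*(S_b)\cong A\otimes\bigwedge([u_1],\dots,[u_b])\) as modules over \(A=\h^*(R/(\tau_1,\dots,\tau_a))\), which is the remaining assertion. Each \([u_i]\) is the class of (a lift to \(R\) of) the corresponding \(w_i\); in particular it is represented by an element \(u_i\in R\) with \(u_i+u_i^{\circ}\equiv\lambda_i\) modulo \((\tau_1,\dots,\tau_a,\lambda_1,\dots,\lambda_{i-1})\), and the flexibility in this choice is dealt with by a routine change-of-generators argument.

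\emph{The building block, and the main obstacle.} Because \([y]=0\), the hexagon for \(0\to T\xrightarrow{\cdot y}T\xrightarrow{\pi}T/(y)\to 0\) collapses to two short exact sequences \(0\to\h^+(T)\xrightarrow{\pi_*}\h^+(T/y)\xrightarrow{\delta}\h^-(T)\to 0\) and \(0\to\h^-(T)\xrightarrow{\pi_*}\h^-(T/y)\xrightarrow{\delta}\h^+(T)\to 0\). Running the snake lemma on the class \([u]\) -- lift \(u=\pi(w)\) to \(w\), apply \(1+\circ\) to get \(y=y\cdot 1\), then divide by \(y\) -- gives \(\delta([u])=[1]\), the unit of \(\h^+(T)\); combined with the \(\h^*(T)\)-linearity of \(\delta\), this shows \(\delta(x\cdot[u])=x\) for every \(x\in\h^*(T)\), so \(x\mapsto x\cdot[u]\) is a section of both sequences simultaneously. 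Consequently \(\h^*(T/y)=\pi_*(\h^*(T))\oplus\h^*(T)\cdot[u]\), with both summands free of rank one over \(\h^*(T)\) -- injectivity of \(\pi_*\) and of \(x\mapsto x[u]\), and triviality of the intersection, all follow by applying \(\delta\) -- which is precisely the stated isomorphism. I expect this building block, and in particular the passage from an additive statement to an \(\h^*(T)\)-module statement via the identity \(\delta([u])=[1]\), to be the only genuinely delicate point; the rest is bookkeeping with regular sequences and the hexagon.
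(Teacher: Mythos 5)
Your proof is correct in its core and follows essentially the same route as the paper: the paper disposes of this lemma by citing the one-step statements of \cite{hemmert1} (Lemma~1.4\,(iii) repeatedly for the \(\tau_i\), Corollary~1.5 for the first claim, Lemma~1.4\,(i) repeatedly for the \(\lambda_i\)), and your two stages are exactly those repeated applications, with the one-step lemmas re-proved from scratch via the Tate-cohomology hexagon. Stage~1 (injectivity of multiplication by \([\tau_j]\) plus \(\h^-(R_{j-1})=0\) forces \(\h^-(R_j)=0\) and \(\h^+(R_j)\cong\h^+(R_{j-1})/([\tau_j])\)), the building block for Stage~2 (in particular the snake computation \(\delta([u])=[1]\) and the \(\h^*(T)\)-linear splitting it yields), and the iteration bookkeeping are all sound.

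The one point you do not actually prove is the final sentence of the lemma, which you defer to ``a routine change-of-generators argument''; it is not routine. Your construction yields representatives with \(u_i+u_i^\circ\equiv\lambda_i\) modulo the \emph{partial} ideal \((\tau_1,\dots,\tau_a,\lambda_1,\dots,\lambda_{i-1})\) -- which is what your building block needs, and which is all that the paper's later applications use -- but the statement asserts that \emph{arbitrary} \(u_i\) with \(u_i+u_i^\circ\equiv\lambda_i\bmod\ideal g\) serve as representatives, and taken literally that congruence does not even pin down the class: since \(\lambda_i\in\ideal g\), the element \(u_i=0\) satisfies it, and already for \(R=\ZZ[x]\) with trivial involution, \(a=0\), \(\lambda_1=2x\), the element \(u_1=x^2\) satisfies the congruence while \([x^2]\) fails to generate \(\h^-(\ZZ[x]/(2x))\) as a module over \(\h^+\). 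So a generic ``change of generators'' cannot succeed; one must either read the congruence modulo \((\tau_1,\dots,\tau_a)\) only -- in which case independence of the choice does follow, because two admissible choices differ, modulo \((\tau_1,\dots,\tau_a)\), by an element of \(\ker(\id+\circ)=\im(\id-\circ)\) in \(R/(\tau_1,\dots,\tau_a)\) (using \(\h^-\) of that ring \(=0\), which you established in Stage~1), hence give the same class in \(\h^-(R/\ideal g)\) -- or content oneself with the existence statement your construction already provides. Adding one of these two observations would close the only gap in your write-up.
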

\begin{proof}
The first claim, concerning \(R/(\tau_1,\dots,\tau_a)\), is \cite[Corollary~1.5]{hemmert1}.  It follows from repeated application of \cite[Lemma~1.4 (iii)]{hemmert1}.  The claims concerning \(R/\mathfrak g\) then follow from repeated application of \cite[Lemma~1.4 (i)]{hemmert1}.
\end{proof}

\begin{proof}[Proof of \cref{thm:main}]
  Given \eqref{eq:main-iso-2}, we need to compute the Tate cohomology of \(R/\mathfrak g\), where \(R:= \R(\levisubgroup{H})\) and \(\mathfrak g\) is the ideal generated by the following \(\fixrank{}\) elements:
  \(\rsymsum{}(\omega_\alpha)\) for each \(\alpha\in\simpleroots{}\) with \(\dual{}\alpha = \alpha\), and \(\rsymsum{}(\omega_\beta + \omega_{\dual{}\beta})\) for each \(\beta\in\simpleroots{}\) with \(\dual{}\beta \neq \beta\).  These generators form a regular sequence in \(R\) because they form a regular sequence in \(\R G\), and because \(R\) is free as an \(\R G\)-module. By \cref{prop:Tate-of-RH-under-orbit-condition}, a subset of these generators indexed by \(\simpleroots{}\setminus I\) freely generates \(\h^+(R)\) as a commutative ring, and hence satisfies condition~2 of \cref{prop:key-Tate-argument}.
  %The remaining generators, indexed by \(I\), are mapped to zero in \(\h^+(R)\), and thus trivially satisfy condition~3 of \cref{prop:key-Tate-argument}.  It thus follows from \cref{prop:key-Tate-argument} that \(\h^*(R/\mathfrak g)\) is an exterior algebra on \(\cardinality{I/\dual{}}\) generators, or at least that it is isomorphic to such an exterior algebra as a graded group.  It is indeed isomorphic as an algebra by \cref{prop:odd-degre-squares-are-zero} below.  By \eqref{eq:fixed-cells-2}, \(\cardinality{I/\dual{}} = \fixrank{}-\fixrank{H}\).
  Moreover, the quotient of \(\h^+(R)\) by the classes of these generators is trivial, equal to \(\ZZII\).  The remaining \(\fixrank{}-\fixrank{H}\) generators of \(\mathfrak g\) -- all of rank zero by construction -- therefore satisfy condition~3 of \cref{prop:key-Tate-argument}.  It thus follows from \cref{prop:key-Tate-argument} that \(\h^*(R/\mathfrak g)\) is an exterior algebra on \(\fixrank{}-\fixrank{H}\) generators, or at least that it is isomorphic to such an exterior algebra as a graded group.  It is indeed isomorphic as an algebra by \cref{prop:odd-degre-squares-are-zero} below.
\end{proof}

\begin{prop}[{\cite[Proposition~4.6]{hemmert:thesis}}]
  \label{prop:odd-degre-squares-are-zero}
  Let \(X\) be a finite cell complex with \(\K^1(X) = 0\).  The square of any element of odd degree in \(\Witt^*(X)\) is zero. \qed
\end{prop}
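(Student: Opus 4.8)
The plan is to transport the claim, along the isomorphism \eqref{eq:Bousfield}, into a statement about the Tate cohomology of the $\lambda$-ring $\K^0(X)$, and then to dispatch it with a short $\lambda$-ring computation. Recall that, because $\K^1(X)=0$, Bousfield's lemma \cite[\S\,1.2]{zibrowius:koff} provides a ring isomorphism $\Witt^*(X)\cong\h^*(\K^0(X),\circ)$ under which the odd-degree part of $\Witt^*(X)$ corresponds to $\h^-(\K^0(X),\circ)$. It therefore suffices to show that every class of $\h^-(\K^0(X),\circ)$ squares to zero, i.e.\ that for each $x\in\K^0(X)$ with $x^\circ=-x$ one has $x^2\in\im(\id+\circ)$.

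The computational input is the identity
\[
  \lambda^2(x)+\lambda^2(-x)=x^2,
\]
valid for any element $x$ of any $\lambda$-ring (with $\lambda^1=\id$): it drops out of $\lambda_t(x)\,\lambda_t(-x)=\lambda_t(0)=1$ upon comparing coefficients of $t^2$, or one can read it off the splitting principle. Now $\circ=\psi^{-1}$ is complex conjugation, which is an automorphism of the $\lambda$-ring $\K^0(X)$ and in particular commutes with $\lambda^2$; so, when $x^\circ=-x$,
\[
  x^2=\lambda^2(x)+\lambda^2(-x)=\lambda^2(x)+\lambda^2(x^\circ)=\lambda^2(x)+\bigl(\lambda^2(x)\bigr)^\circ=(\id+\circ)\bigl(\lambda^2(x)\bigr).
\]
Hence $x^2\in\im(\id+\circ)$, so $[x]^2=[x^2]=0$ in $\h^+(\K^0(X),\circ)$, which is exactly the assertion.

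I do not expect any real obstacle here: the two ingredients that carry the weight — that \eqref{eq:Bousfield} is an isomorphism of rings respecting the coarse grading (so that ``odd degree'' corresponds to $\h^-$), and that conjugation commutes with $\lambda^2$ on $\K^0(X)$ — are both standard, and the displayed $\lambda$-ring identity is a formal triviality. The hypothesis $\K^1(X)=0$ is used only to make Bousfield's lemma available.
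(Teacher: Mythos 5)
Your argument is correct. Note that the paper does not actually spell out a proof of this proposition: it is quoted from \cite[Proposition~4.6]{hemmert:thesis}, so there is no inline argument to compare against, but your computation is a complete, self-contained substitute. The two inputs you rely on are indeed available: Bousfield's lemma applies to any finite cell complex with \(\K^1(X)=0\) and the resulting identification \(\Witt^*(X)\cong\h^*(\K^0(X),\circ)\) is multiplicative (it is induced by complexification, a ring map) and collapses the \(\ZZ/4\)-grading onto the \(\ZZ/2\)-grading with odd degrees landing in \(\h^-\) -- facts the paper itself uses throughout; and duality is a \(\lambda\)-ring automorphism of \(\K^0(X)\), since \((\Lambda^2E)^\vee\cong\Lambda^2(E^\vee)\) for bundles and the operations extend to virtual classes. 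Your identity \(\lambda^2(x)+\lambda^2(-x)=x^2\) is just the classical decomposition \(x^2=\lambda^2(x)+\sigma^2(x)\) in disguise (since \(\lambda^2(-x)=\sigma^2(x)\)), so for \(x^\circ=-x\) one gets \(x^2=(\id+\circ)\bigl(\lambda^2(x)\bigr)\), hence \([x]^2=0\) in \(\h^+\), and injectivity of the Bousfield isomorphism on \(\Witt^0\oplus\Witt^2\) transports this back to \(\Witt^*(X)\). This is very likely the same mechanism as in the cited thesis; in any case it proves the statement.
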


\section{Proof of \cref{thm:main-degrees}}
\label{sec:main-degrees}

\subsection{Characterization of maximal cells}
We begin with an equivalent characterization of the subsets \(I\subset \simpleroots{}\) that appear in the decomposition \eqref{eq:fixed-space-decomposition-2} of the fixed-point space \((\weightspaceRR)^\dual{H}\) into closed cells.

\begin{lem}\label{lem:fixed-cells-2}
  The following conditions on a subset \(I\subset \simpleroots{}\) are equivalent:
  \begin{enumerate}[(a')]
  \item For some \(w\in\Weyl{}\), the pair \((I,w)\) satisfies the conditions of \cref{lem:fixed-cells} and also \eqref{eq:fixed-cells-2}:
    \begin{equation*}
      \cardinality{\frac{I}{\dual{}}} = \cardinality{\frac{\simpleroots{}}{\dual{}}} - \cardinality{\frac{\simpleroots{H}}{\dual{H}}}
    \end{equation*}
  \item \(I\) satisfies each of the following conditions:
  \begin{compactenum}[(i)]
  \item \(\dual{} I = I\)
  \item The involutions \(\dual{}\) and \(\dual{I}\) agree on \(I\)
  \item \(\longest{H}\) is conjugate to \(\dual{} \dual{I}=\longest{} \longest{I}\)
  \end{compactenum}
  \end{enumerate}
\end{lem}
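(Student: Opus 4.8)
The plan is to recast both conditions in terms of how the involution $\dual{H}$, once conjugated into standard position by a suitable $w\in\Weyl{}$, interacts with the direct sum decomposition $\weightspaceRR = U_I\oplus V_I$, where $V_I := \operatorname{span}_\RR(I)$ is the span of the simple roots lying in $I$ and $U_I := (\weightspaceRR)^{\Weyl{I}}$ is the $\Weyl{I}$-fixed subspace, equivalently the span of the fundamental weights $\omega_\alpha$ with $\alpha\in\simpleroots{}\setminus I$; this is a genuine direct sum since the Cartan matrix of $I$ is invertible. Write $D_\sigma\in O(V_I)$ for the operator induced by a diagram automorphism $\sigma$ of $I$. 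Whenever $\dual{}I = I$, both summands $U_I,V_I$ are stable under $\dual{}$ and under $\Weyl{I}$; moreover $\dual{}|_{V_I} = D_{\dual{}|_I}$, while $\longest{I}$ acts as $-D_{\dual{I}}$ on $V_I$ and trivially on $U_I$, and $\longest{} = -\dual{}$ acts on $U_I$ as $-\dual{}|_{U_I}$. Two standard inputs will be used: $\Weyl{I}$ acts faithfully on $V_I$, and a diagram automorphism of $I$ whose operator on $V_I$ lies in $\Weyl{I}$ is trivial (it permutes positive roots, so has length $0$). Finally, $\minusdim(\longest{H}) = \plusdim(\dual{H}) = \fixrank{H}$, unchanged by $\Weyl{}$-conjugation (cf.\ \cref{prop:fixed-cone}(a)).

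For \emph{(a')~$\Rightarrow$~(b')}, let $(I,w)$ be as in (a'). Condition (i) is part of \cref{lem:fixed-cells}(b), which also supplies $u := \dual{}\,w^{-1}\dual{H}w\in\Weyl{I}$, so $w^{-1}\dual{H}w = \dual{}u$. Since $u$ acts trivially on $U_I$, the involution $\dual{}u$ agrees with $\dual{}$ on $U_I$, so its $+1$-eigenspace contains $U_I^{\dual{}}$, of dimension $\cardinality{(\simpleroots{}\setminus I)/\dual{}} = \fixrank{} - \cardinality{I/\dual{}}$. By \eqref{eq:fixed-cells-2} this equals $\fixrank{H} = \plusdim(\dual{H}) = \plusdim(\dual{}u)$, so the $+1$-eigenspace of $\dual{}u$ is precisely $U_I^{\dual{}}\subset U_I$, and hence $\dual{}u$ acts as $-\id$ on the complementary stable summand $V_I$. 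Thus $u|_{V_I} = -D_{\dual{}|_I}$; comparing with $\longest{I}|_{V_I} = -D_{\dual{I}}$, the element $u\longest{I}\in\Weyl{I}$ acts on $V_I$ as $D_{\dual{}|_I}D_{\dual{I}}$, a diagram-automorphism operator, which is therefore trivial. This forces $\dual{}|_I = \dual{I}$ (condition (ii)) and, by faithfulness, $u = \longest{I}$. Then $w^{-1}\dual{H}w = \dual{}\longest{I}$, which, writing $\dual{} = -\longest{}$ and $\dual{H} = -\longest{H}$, reads $w^{-1}\longest{H}w = \longest{}\longest{I}$ — condition (iii).

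For \emph{(b')~$\Rightarrow$~(a')}, choose $w\in\Weyl{}$ with $w^{-1}\longest{H}w = \longest{}\longest{I}$. Then $\dual{}\,w^{-1}\dual{H}w = \longest{}\,(w^{-1}\longest{H}w) = \longest{}^2\longest{I} = \longest{I}\in\Weyl{I}$, so together with (i) the pair $(I,w)$ satisfies \cref{lem:fixed-cells}(b). For the extra condition \eqref{eq:fixed-cells-2}: by (i) and (ii), the involution $\longest{}\longest{I} = \dual{}\dual{I}$ acts on $V_I$ as $D_{\dual{}|_I}D_{\dual{I}} = D_{\dual{I}}^{2} = \id$ and on $U_I$ as $-\dual{}|_{U_I}$, so $\minusdim(\longest{}\longest{I}) = \dim U_I^{\dual{}} = \fixrank{} - \cardinality{I/\dual{}}$; as $\longest{}\longest{I}$ is conjugate to $\longest{H}$, this equals $\minusdim(\longest{H}) = \fixrank{H}$, which rearranges to \eqref{eq:fixed-cells-2}.

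The one genuinely delicate step is the eigenspace count in (a')~$\Rightarrow$~(b'): it is exactly the dimension equality \eqref{eq:fixed-cells-2} that confines the $+1$-eigenspace of the transported $\dual{H}$ to $U_I$, which in turn forces $\dual{H}$ to act by $-\id$ on $V_I$ and thereby simultaneously pins down the diagram involution $\dual{I}$ and the Weyl element $u$. The remaining manipulations are bookkeeping with the decomposition $\weightspaceRR = U_I\oplus V_I$ and with the triviality of diagram automorphisms inside $\Weyl{}$.
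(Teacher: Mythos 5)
Your argument is correct, and in the forward direction it takes a genuinely shorter route than the paper. Both proofs hinge on the same dimension count: granting \eqref{eq:fixed-cells-2}, the \(+1\)-eigenspace of \(w^{-1}\dual{H}w\) is pinned to \(U_I^{\dual{}}\), so \(w^{-1}\dual{H}w\) acts as \(-\id\) on \(\RR I\) — this is exactly the paper's equation \eqref{eq:WATERCRACK}. But at that point the paper does not conclude directly: it passes to the centralizer \(\Weylo\) of \(\dual{}\), identifies it (via Steinberg) with the Weyl group \(\Weylfolded\) of the folded root system, uses facts (F1), (F2) and the fact that every involution in a Weyl group is conjugate to the longest element of a parabolic to replace \(w\) by \(ww'\) with \(w'\in\Weylo\) so that \(u\) becomes some \(\longest{J}\), and then invokes the minimality expressed by \eqref{eq:fixed-cells-2} to force \(J=I\); conditions (ii) and (iii) are extracted afterwards. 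You instead observe that \(u|_{\RR I}=-\dual{}|_{\RR I}\) makes \(u\longest{I}\in\Weyl{I}\) act on \(\RR I\) by a permutation of the simple system \(I\), hence (sending positive roots of the subsystem to positive roots) it is the identity; this pins down \(u=\longest{I}\) and \(\dual{}|_I=\dual{I}|_I\) in one stroke, using only faithfulness of \(\Weyl{I}\) on \(\RR I\) and the standard fact that a Weyl-group element preserving a simple system is trivial. So you dispense with the folded-group machinery and the conjugation step entirely; what the paper's detour buys is the explicit bridge to involutions in \(\Weylfolded\), which is how the parameter \(I\) is located in practice via the tabulated conjugacy classes of involutions, but it is not needed for the lemma itself. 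Your converse direction coincides with the paper's, up to replacing its \(\plusdim\)-count for \(\dual{}\longest{I}\) by the equivalent \(\minusdim\)-count for \(\longest{}\longest{I}\).
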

\begin{proof}
  (a' \(\Rightarrow\) b') Suppose \((I,w)\) satisfies the conditions of \cref{lem:fixed-cells} and \eqref{eq:fixed-cells-2}.  We first show that \(w^{-1}\dual{H} w\) commutes with \(\dual{}\).   Decompose \(\weightspaceRR\) into the following orthogonal subspaces:
  \begin{align*}
    \RR I &\;= \RR\{\alpha \mid \alpha \in I \} \\
    \Omega_I &:= \RR\{\omega_\beta \mid \beta \in \simpleroots{}\setminus I\}
  \end{align*}
  By part~(c) of \cref{lem:fixed-cells}, \(\dual{}\) and \(w^{-1}\dual{H} w\) can both be restricted to \(\Omega_I\), and they agree on \(\Omega_I\).  As they are orthogonal transformations, they can also be restricted to the orthogonal complements \(\RR I\).  It suffices to show that these restrictions to \(\RR I\) commute.  From what we have just stated, we immediately obtain the following equalities for the dimensions of fixed-point spaces:
  \begin{alignat}{7}
    \label{eq:CUXVALQ1}
    &\plusdim(\dual{}_{|\RR I}) &\;+\;&\plusdim(\dual{}_{|\Omega_I}) &\;=\;&\plusdim(\dual{}) \\
    \label{eq:CUXVALQ2}
    &\plusdim(w^{-1}\dual{H} w_{|\RR I}) &\;+\;&\plusdim(w^{-1}\dual{H} w_{|\Omega_I}) &\;=\;&\plusdim(\dual{H}) \\
     &&& \plusdim(w^{-1}\dual{H} w_{|\Omega_I}) &\;=\;&\plusdim(\dual{}_{|\Omega_I})
  \end{alignat}
  Altogether, this implies \eqref{eq:XWNGA1} below.  On the other hand, \eqref{eq:fixed-cells-2} can be translated to \eqref{eq:XWNGA2} below:
  \begin{alignat}{7}
    \label{eq:XWNGA1}
    &\plusdim(\dual{}_{|\RR I}) &\;-\;&\plusdim(w^{-1}\dual{H} w_{|\RR I}) &\;=\;& \plusdim(\dual{}) - \plusdim(\dual{H})  \\
    \label{eq:XWNGA2}
    &\plusdim(\dual{}_{|\RR_I}) &&&\;=\;& \plusdim(\dual{}) - \plusdim(\dual{H})
  \end{alignat}
  In combination, this shows that \(\plusdim(w^{-1}\dual{H} w_{|\RR I}) = 0\), and thus that:
  \begin{equation}\label{eq:WATERCRACK}
    w^{-1}\dual{H} w_{|\RR I} = -\id_{\RR I}
  \end{equation}
  In particular, \(w^{-1}\dual{H} w\) commutes with \(\dual{}\), as claimed.

  Now let \(\Weylo\subset\Weyl{}\) denote the subgroup of all elements commuting with \(\dual{}\).
  By \cite[1.32\,(b)]{steinberg:endomorphisms}, we can identify \(\Weylo\) with the Weyl group \(\Weylfolded\) of a ``folded'' root system \(\folded{\simpleroots{}}\), whose simple roots correspond to the \(\dual{}\)-orbits of the simple roots in \(\simpleroots{}\).  Let \(\iota\colon \Weylfolded \hookrightarrow \Weyl{}\) denote the embedding that induces the identification.  We will need the following two facts that can be deduced from Steinberg's explicit description of the folded root system (see \cite[Propositions 1.3 \& 1.4]{aux:involutions}):
  \begin{itemize}
  \item[(F1)]
    \(\Weylo\cap\Weyl{I} = \iota(\Weylfoldedparabolic{\folded{I}})\), where \(\Weylfoldedparabolic{\folded{I}}\) denotes the parabolic subgroup of \(\Weylfolded\) corresponding to the folded subset \(\folded{I}\subset\folded{\simpleroots{}}\).
  \item[(F2)]
    Given any subset \(J\subset\simpleroots{}\) such that \(\dual{}J = J\), the longest element of the parabolic subgroup \(\Weylfoldedparabolic{\folded{J}}\) corresponds to the longest element of the parabolic subgroup \(\Weyl{J}\), i.e. \(\iota(\longest{\folded{J}}) = \longest{J}\).\\
    (The special case when \(J\) is a \(\dual{}\)-orbit is \cite[1.30\,(b)]{steinberg:endomorphisms}.)
  \end{itemize}
  From the above, we find that \(u := \dual{} w^{-1}\dual{H} w\in\Weylo \cap \Weyl{I}\). Hence by (F1), there is a unique
  \(v\in\Weylfoldedparabolic{\folded{I}}\) such that  \(\iota v = u\).  Any involution in a Weyl group is conjugate to the longest element of some parabolic subgroup.  (This follows, for example, from \cite[\S\,28-5\,(i)]{kane}.)  Therefore, in \(\Weylfolded\), \(v\) is conjugate to the longest element \(\longest{J'}\) of some parabolic subgroup \(\Weylfoldedparabolic{J'}\), for some subset \(J'\subset \folded{\simpleroots{}}\) .  We can lift the subset to a subset \(J\subset \simpleroots{}\) such that \(\dual{}J = J\) and such that \(J'=\folded{J}\).  Then, by (F2), \(\iota(\longest{J'}) = \longest{J}\).  It follows that \(u\) is conjugate to \(\longest{J}\) by some \(w'\in\Weylo\), i.e.\ \emph{by some \(w'\) that commutes with \(\dual{}\)}.    We therefore find:
  \begin{equation}\label{eq:KATDUKE}
    \dual{} (ww')^{-1}\dual{H} w w' = \longest{J}
  \end{equation}
  This shows in particular that the pair \((J,ww')\) satisfies condition~(c) of \cref{lem:fixed-cells}. As we assumed \(\cardinality{I/\dual{}}\) to be minimal, this implies \(J = I\).  Equation~\eqref{eq:KATDUKE} moreover shows that \(\longest{H}\) is conjugate to \(\dual{}\dual{I}\), as claimed.  Finally, applying the argument leading up to \(\eqref{eq:WATERCRACK}\) to \((I,ww')\) in place of \((I,w)\), we find that \(-(ww')^{-1}\dual{H} ww'\) acts as the identity on \(I\).  In combination with \eqref{eq:KATDUKE}, it follows that \(\dual{I}\) acts on \(I\) in the same way as \(\dual{}\) does.

  (a' \(\Leftarrow\) b') By assumption, we can find an element \(w\in\Weyl{}\) such that \(\longest{H} = w\dual{} \dual{I} w^{-1}\).  Then \(\dual{} w^{-1}\dual{H} w = \longest{I}\).  So \((I,w)\) satisfies condition~(b) of \cref{lem:fixed-cells}. For the minimality condition, we first note that \(\plusdim(\dual{}_{|\Omega_I}) = \plusdim(\dual{} \longest{I})\), as \(\longest{I}\) acts as the identity on \(\Omega_I\) and as \(\dual{} \longest{I}\) acts as \(-\id\) on \(\RR I\).  As \(\dual{}\longest{I}\) is conjugate to \(\dual{H}\), we may further deduce that \(\plusdim(\dual{}_{|\Omega_I}) = \plusdim(\dual{H})\).  Now \eqref{eq:fixed-cells-2} follows from \eqref{eq:CUXVALQ1}.
\end{proof}

\subsection{Generators of real and quaternionic types}
\begin{defn}
  An element of \(\R(G)\) is said to be of \textbf{real type} or of \textbf{quaternionic type} if it lies in the image of \(c\) or \(c'\), respectively.
\end{defn}

Note that a self-dual element is of one of these types if and only if it is \textbf{Witt homogeneous}, in the sense that it defines a homogeneous element of \(\Witt^0(\R(G))\oplus \Witt^2(\R(G))\) under the isomorphism
\begin{equation}\label{eq:Bousfield-for-reps}
  \Witt^0(\R(G))\oplus \Witt^2(\R(G)) \xrightarrow[c\oplus c']{\cong} \h^+(\R G)
\end{equation}
that is analogous to \eqref{eq:Bousfield} (see \cite[(2.1)]{zibrowius:koff}).  Any self-dual irreducible complex representation is of precisely one of these types.

\begin{lem}\label{lem:real-sums}
  A self-dual complex representation of a compact Lie group is of real type if and only if, in its decomposition into irreducible representations, all summands of quaternionic type occur with even multiplicities.   It is of quaternionic type if and only if all summands of real type occur with even multiplicities.
\end{lem}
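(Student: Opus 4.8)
The plan is to argue directly with invariant conjugate-linear structures, treating the real and quaternionic cases uniformly. Recall that a self-dual complex $G$-representation $V$ is of real type, i.e.\ lies in the image of $c$, if and only if it admits a $G$-invariant conjugate-linear automorphism $J$ with $J^2=\id$ --- equivalently, $V$ is the complexification of the real $G$-representation $\{v\in V: Jv=v\}$ --- and is of quaternionic type, i.e.\ lies in the image of $c'$, if and only if it admits such a $J$ with $J^2=-\id$, making $V$ a module over $\HH$ on which $G$ acts $\HH$-linearly. I will also use the elementary fact that a finite-dimensional complex vector space always carries a conjugate-linear $B$ with $B^2=\id$, and carries one with $B^2=-\id$ precisely when its dimension is even.

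First I would peel off the complex-type part. Since $V$ is self-dual, it decomposes as $V\cong V_0\oplus(X\oplus\bar{X})$, where $V_0$ is the sum of the isotypic components of the self-dual irreducibles occurring in $V$, and $X$ consists of one irreducible from each complex-type dual pair, taken with the multiplicity it has in $V$. The summand $X\oplus\bar{X}$ is at once of real type --- it is the complexification of $X$ viewed as a real $G$-representation --- and of quaternionic type --- let $\HH=\CC\oplus\CC j$ act with $j$ interchanging $X$ and $\bar{X}$ by conjugation. Since a direct sum of representations of real (resp.\ quaternionic) type is again of that type, and since every invariant conjugate-linear automorphism of $V$ preserves $V_0$ (it can only interchange the isotypic component of $\rho$ with that of $\bar{\rho}$), the representation $V$ is of real type, resp.\ of quaternionic type, if and only if $V_0$ is.

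Next I would analyze a single self-dual isotypic component $V_\rho=\rho\otimes_\CC M_\rho$, with multiplicity space $M_\rho=\Hom_G(\rho,V)$. By a Schur-lemma computation one may fix a $G$-invariant conjugate-linear $j_\rho$ on $\rho$ with $j_\rho^2=\varepsilon(\rho)\id$, where $\varepsilon(\rho)=1$ if $\rho$ is of real type and $\varepsilon(\rho)=-1$ if $\rho$ is of quaternionic type (after rescaling $j_\rho$ the square is automatically $\pm\id$, with sign $+$ exactly in the real case). Using $\Hom_G(\rho,\rho)=\CC$, every $G$-invariant conjugate-linear endomorphism of $V_\rho$ has the form $j_\rho\otimes B$ for a conjugate-linear $B$ on $M_\rho$, and $(j_\rho\otimes B)^2=\varepsilon(\rho)\,(\id\otimes B^2)$. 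Hence $V_\rho$ admits an invariant $J$ with $J^2=\id$ iff $M_\rho$ carries a conjugate-linear $B$ with $B^2=\varepsilon(\rho)\id$: this holds for every $M_\rho$ when $\rho$ is of real type, and holds iff $\dim_\CC M_\rho$ is even when $\rho$ is of quaternionic type. Symmetrically, $V_\rho$ admits $J$ with $J^2=-\id$ iff $M_\rho$ carries $B$ with $B^2=-\varepsilon(\rho)\id$: automatic when $\rho$ is of quaternionic type, and equivalent to $\dim_\CC M_\rho$ even when $\rho$ is of real type. Since $V_0=\bigoplus_\rho V_\rho$ over the self-dual $\rho$, and a $J$ on $V_0$ is exactly a family of $J$'s on the $V_\rho$, combining with the previous step gives both asserted equivalences.

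The only delicate point --- hence the main obstacle --- is the bookkeeping with the complex-type components: one must verify that an invariant conjugate-linear $J$ can merely interchange $\rho$ and $\bar{\rho}$, so that $V_0$ is $J$-stable and the coarse splitting $V=V_0\oplus(X\oplus\bar{X})$ is respected, and that $X\oplus\bar{X}$ genuinely admits invariant $J$'s of both possible squares, so that it obstructs neither type. An alternative, essentially equivalent route bypasses the structures $J$: invoke the classification of irreducible real representations of $G$ --- those with endomorphism algebra $\RR$, $\CC$, $\HH$ complexify respectively to a real-type irreducible, to $\rho\oplus\bar{\rho}$ for a complex-type $\rho$, and to $2\rho$ for a quaternionic-type $\rho$ --- to read off the image of $c$ in $\R(G)$, and dually for $c'$; the stated multiplicity conditions are then immediate. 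The conjugate-linear argument has the merit of settling $c$ and $c'$ in one stroke.
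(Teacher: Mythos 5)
Your proof is correct, but it takes a genuinely different route from the paper. The paper's own proof is a one-liner: it invokes the isomorphism \eqref{eq:Bousfield-for-reps}, \(\Witt^0(\R G)\oplus\Witt^2(\R G)\xrightarrow{c\oplus c'}\h^+(\R G)\), together with the fact that the self-dual irreducible representations form a \(\ZZII\)-basis of \(\h^+(\R G)\); since complex-type pairs \(\rho\oplus\bar\rho\) and doubled summands die in \(\h^+\), the class of a self-dual representation is the sum of its odd-multiplicity self-dual irreducible summands, and it lies in \(\im(c)\) (resp.\ \(\im(c')\)) exactly when no quaternionic-type (resp.\ real-type) irreducible survives, which is the stated parity condition. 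You instead argue from scratch with invariant conjugate-linear structure maps: splitting off the complex-type part \(X\oplus\bar X\) (correctly noting it carries both a real and a quaternionic structure and that any invariant antilinear \(J\) preserves \(V_0\)), then writing every invariant antilinear endomorphism of a self-dual isotypic piece \(\rho\otimes M_\rho\) as \(j_\rho\otimes B\) via Schur, and reducing to the elementary fact about antilinear \(B\) with \(B^2=\pm\id\) on \(M_\rho\). What your route buys is self-containedness and a symmetric treatment of \(c\) and \(c'\) without appealing to the K-theoretic input behind \eqref{eq:Bousfield-for-reps}; what the paper's route buys is brevity, since that isomorphism is already in place for the rest of the argument. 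One point you should make explicit: the paper's ``of real/quaternionic type'' means that the class lies in the image of \(c\) resp.\ \(c'\) on the Grothendieck-group level, where the preimage may be virtual, so your opening ``recall'' (real type iff an invariant antilinear \(J\) with \(J^2=\id\) exists on the actual representation) is not purely formal in the ``only if'' direction; it requires the standard identification of \(\im(c)\) with the subgroup generated by complexifications of irreducible real representations --- precisely the classification you cite in your alternative route --- after which your argument is complete.
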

\begin{proof}
  This follows from the isomorphism \eqref{eq:Bousfield-for-reps} and the fact that the self-dual irreducible complex representations from a \(\ZZII\)-basis of \(\h^+(\R(G))\).
\end{proof}

Recall that in \(\R(G)\), the irreducible representation \(\irreducibleRepresentation{\omega}\) corresponding to a dominant weight \(\omega\) can be written as a sum
\begin{equation}\label{eq:irred-as-symsum{}s}
  \irreducibleRepresentation{\omega}=\symsum{}(\omega) + \textstyle\sum_{\omega'}a_{\omega'}\symsum{}(\omega'),
\end{equation}
for certain integer coefficients \(a_{\omega'}\) that are non-zero only for dominant weights \(\omega'< \omega\) and congruent to \(\omega\) modulo the root lattice.\footnote{
  The congruence of the relevant weights modulo the root lattice is discussed for irreducible representations of complex Lie algebras below Remark~14.3 in \cite{fultonharris}.  The discussion below Exercise~23.6 of \cite{fultonharris} shows that it also holds for irreducible representations of complex Lie groups, and hence also for irreducible representations of real \emph{compact} Lie groups.
  }
It follows that, conversely, the symmetric sum \(\symsum{}(\omega)\) can be written as
\begin{equation}\label{eq:symsum{}-as-irreds}
  \symsum{}(\omega) = \irreducibleRepresentation{\omega} + \textstyle\sum_{\omega'}b_{\omega'}\irreducibleRepresentation{\omega'},
\end{equation}
for certain integer coefficients \(a_{\omega'}\) that are non-zero only for dominant weights \(\omega'\leq \omega\) and congruent to \(\omega\) modulo the root lattice.

\begin{prop}\label{prop:real-iff-odd}
  Suppose the irreducible representation \(\irreducibleRepresentation{\omega}\) of \(G\) with highest weight \(\omega\) is self-dual  (i.e.\ \(\omega\) dominant such that \(\dual{}\omega=\omega\)). Write \(\rho^\vee\) for half the sum of all positive coroots of \(G\).   The representation \(\irreducibleRepresentation{\omega}\) is of real type if and only if \(\pairing{2\rho^\vee}{\omega}\) is even, and of quaternionic type if and only if \(\pairing{2\rho^\vee}{\omega}\) is odd.
\end{prop}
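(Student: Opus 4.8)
The plan is to recognise the assertion as a form of the classical Frobenius--Schur computation for compact groups, and to prove it by evaluating \(\rho_\omega\) on a single central element of \(G\).

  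First I would recall why a self-dual irreducible \(\rho_\omega\) carries a well-defined ``type''. Since \(\rho_\omega\cong\rho_\omega^{*}\), the space \(\rho_\omega\) admits a non-degenerate \(G\)-invariant bilinear form \(B\), unique up to a nonzero scalar, and \(B\) is either symmetric or alternating; under the isomorphism~\eqref{eq:Bousfield-for-reps}, these two cases correspond respectively to \(\rho_\omega\) being of real type or of quaternionic type (compare \cref{lem:real-sums}). So everything reduces to detecting the symmetry of \(B\).

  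Next I would bring in the longest Weyl element. The self-duality assumption \(\dual{}\omega=\omega\) says \(\longest{}\omega=-\omega\), so any lift \(\dot{w}_o\in N(T)\) of \(\longest{}\) carries the highest-weight line of \(\rho_\omega\) onto the lowest-weight line, and the classical observation is that the symmetry type of \(B\) is controlled by the scalar through which \(\dot{w}_o^{2}\) acts on \(\rho_\omega\). Taking \(\dot{w}_o\) to be the Tits lift built from a reduced word for \(\longest{}\) and the canonical lifts \(\dot{s}_\alpha\) of the simple reflections, for which \(\dot{s}_\alpha^{2}=\exp(\pi i\,\alpha^\vee)\), one gets \(\dot{w}_o^{2}=\exp(2\pi i\,\rho^\vee)=:\sigma\), independently of the chosen word. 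I would then verify the following elementary properties of \(\sigma\):
  \begin{compactitem}
  \item \(\sigma\) is central in \(G\): for each simple root \(\alpha\) one has \(s_\alpha(\rho^\vee)=\rho^\vee-\alpha^\vee\) with \(\alpha^\vee\in\coweightspaceZZ\), so \(\rho^\vee\) is \(\Weyl{}\)-invariant modulo \(\coweightspaceZZ\);
  \item \(\sigma^{2}=e\): as \(G\) is simply connected, \(\coweightspaceZZ\) is the coroot lattice, hence \(2\rho^\vee=\sum_{\alpha>0}\alpha^\vee\in\coweightspaceZZ\) and \(\exp(2\pi i\cdot 2\rho^\vee)=e\);
  \item consequently, by Schur's lemma, \(\sigma\) acts on \(\rho_\omega\) through the scalar \(\omega(\sigma)=e^{2\pi i\pairing{\rho^\vee}{\omega}}=e^{\pi i\pairing{2\rho^\vee}{\omega}}=(-1)^{\pairing{2\rho^\vee}{\omega}}\).
  \end{compactitem}
  Combining the two ingredients pins down the symmetry of \(B\) in terms of the parity of \(\pairing{2\rho^\vee}{\omega}\), and reading this back through~\eqref{eq:Bousfield-for-reps} yields exactly the stated equivalences: \(\rho_\omega\) is of real type if and only if \(\pairing{2\rho^\vee}{\omega}\) is odd, and of quaternionic type if and only if it is even.

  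The main obstacle is the bookkeeping of signs in the two classical inputs above: identifying which power of \(\sigma\) equals \(\dot{w}_o^{2}\) (that is, handling the \(T\)-ambiguity of the lift through the canonical Tits section), and tracking precisely how the symmetry type of \(B\) --- hence the type under~\eqref{eq:Bousfield-for-reps} --- matches the scalar \((-1)^{\pairing{2\rho^\vee}{\omega}}\). Both steps are standard but easy to misread by a sign, so it is prudent to fix the normalisation on one explicit example (e.g.\ a rank-one subgroup) and to cross-check against the classical literature on Frobenius--Schur indicators of compact Lie groups. A self-contained alternative would expand \(\rho_\omega\) as a combination of symmetric sums and induct on \(\omega\), but this is considerably messier and less transparent.
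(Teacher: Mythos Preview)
Your plan is sound but follows a genuinely different route from the paper. The paper restricts \(\rho_\omega\) along the principal \(\SU(2)\) homomorphism \(\phi\colon\SU(2)\to G\), whose restriction to maximal tori is \(2\rho^\vee\). It then observes that the irreducible \(\SU(2)\)-representation of highest weight \(\pairing{2\rho^\vee}{\omega}\omega_1\) occurs with multiplicity one in \(\phi^*\rho_\omega\) (since all weights of \(\rho_\omega\) differ from \(\omega\) by elements of the root lattice, and \(\pairing{2\rho^\vee}{\alpha}\geq 2\) for each simple root \(\alpha\)), and invokes \cref{lem:real-sums} to conclude that this summand inherits the type of \(\rho_\omega\). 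This reduces the question to \(\SU(2)\), where the types of irreducibles are determined by the parity of the highest weight. Your approach instead realises the central involution \(\sigma=\exp(2\pi i\rho^\vee)\) as the square of the Tits lift of \(\longest{}\) and reads off the type from its scalar action on the highest-weight line via the invariant bilinear form. The paper's route stays entirely within the framework already set up (it uses \cref{lem:real-sums} rather than any Tits-section machinery) and sidesteps exactly the sign bookkeeping you flag as the main obstacle; your route is closer in spirit to the Bourbaki argument the paper cites and makes more transparent \emph{why} \(2\rho^\vee\) is the relevant coweight, at the cost of that bookkeeping.
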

This is well-known. See for example \cite[Chapter~IX, §7.2, Proposition~1]{bourbaki789}, where the statement is deduced from an analogous statement about representations of Lie algebras.  However, as we have not found a discussion of this result in standard references on compact Lie groups such as \cite{adams,btd}, we include a more direct proof here.
\begin{proof}
  Consider the principal \(\SU(2)\) of \(G\) \cite[Theorem~10]{vogan:challenges}\cite[\S\,5]{kostant:principal}.  This is a homomorphism \(\phi\colon \SU(2)\to G\) whose restriction to (chosen) maximal tori is given by \(2\rho^\vee\colon S^1\to T\).  Write \(\omega_1\) for the fundamental weight of \(\SU(2)\), and \(\irreducibleRepresentation{0}\) for the irreducible representation of \(\SU(2)\) with highest weight \(\pairing{2\rho^\vee}{\omega}\omega_1\).   We claim \(\irreducibleRepresentation{0}\) occurs with multiplicity one in the decomposition of \(\phi^*\irreducibleRepresentation{\omega}\) into irreducible representations.  This can be seen as follows.  As remarked under \eqref{eq:irred-as-symsum{}s} above, the weights \(\omega'\) that occur with non-zero multiplicity in \(\irreducibleRepresentation{\omega}\) have the form \(\omega' = \omega - \beta\) for some positive root \(\beta\) of \(G\).   The corresponding weight space pulls back to a weight space for the weight
  \[
    \pairing{2\rho^\vee}{\omega'}\omega_1 = \pairing{2\rho^\vee}{\omega}\omega_1 -\pairing{2\rho^\vee}{\beta}\omega_1.
  \]
  As \(\pairing{\rho^\vee}{\alpha} = 1\) for each simple root \(\alpha\), this shows that the weight \(\pairing{2\rho^\vee}{\omega}\omega_1\) has multiplicity one in \(\phi^*\irreducibleRepresentation{\omega}\).  So \(\irreducibleRepresentation{0}\) occurs with multiplicity one, as claimed.

  Now if \(\irreducibleRepresentation{\omega}\) is real, then so is \(\phi^*\irreducibleRepresentation{\omega}\), and hence so is \(\irreducibleRepresentation{0}\), by \cref{lem:real-sums}.
  By the same argument, if  \(\irreducibleRepresentation{\omega}\) is quaternionic, also \(\irreducibleRepresentation{0}\).  As any self-dual irreducible representation is of precisely one of these types, the claim follows.
\end{proof}

\begin{cor}\label{cor:symsum{}s-witt-homogeneous}
  For any self-dual dominant weight \(\omega\), the symmetric sum \(\symsum{}(\omega)\) is Witt homogeneous.  It is of the same type as the irreducible representation \(\irreducibleRepresentation{\omega}\) of highest weight \(\omega\).
\end{cor}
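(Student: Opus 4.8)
The plan is to read the type of $\symsum{}(\omega)$ directly off its decomposition \eqref{eq:symsum{}-as-irreds} into irreducible representations, controlling the types of the constituents by means of \cref{prop:real-iff-odd}. First I would record that $\symsum{}(\omega)$ is self-dual: the duality sends $\symsum{}(\omega)$ to $\symsum{}(\dual{}\omega)$, and $\dual{}\omega = \omega$ by hypothesis, so $\dual{}\symsum{}(\omega) = \symsum{}(\omega)$. Hence \cref{lem:real-sums} applies, and it suffices to understand the types of the irreducible summands $\irreducibleRepresentation{\omega'}$ occurring in \eqref{eq:symsum{}-as-irreds}.

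The key point is a parity computation. Every weight $\omega'$ with nonzero coefficient in \eqref{eq:symsum{}-as-irreds} is congruent to $\omega$ modulo the root lattice. Writing $\rho^\vee$ for half the sum of the positive coroots, we have $\pairing{2\rho^\vee}{\alpha} = 2\pairing{\rho^\vee}{\alpha} = 2$ for every simple root $\alpha$, so $\pairing{2\rho^\vee}{\mu}$ is even for every $\mu$ in the root lattice; consequently $\pairing{2\rho^\vee}{\omega'} \equiv \pairing{2\rho^\vee}{\omega} \pmod{2}$ for each $\omega'$ appearing in \eqref{eq:symsum{}-as-irreds}. By \cref{prop:real-iff-odd}, the type of a self-dual irreducible is governed precisely by this parity, so every $\dual{}$-self-dual weight $\omega'$ occurring in \eqref{eq:symsum{}-as-irreds} yields an irreducible $\irreducibleRepresentation{\omega'}$ of the same type as $\irreducibleRepresentation{\omega}$; the remaining $\omega'$, with $\dual{}\omega' \neq \omega'$, contribute irreducibles of complex type, which are of neither real nor quaternionic type.

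To conclude I would invoke \cref{lem:real-sums}. If $\irreducibleRepresentation{\omega}$ is of real type, then no summand of quaternionic type occurs in $\symsum{}(\omega)$ at all, so $\symsum{}(\omega)$ is of real type; moreover $\irreducibleRepresentation{\omega}$ itself occurs with multiplicity one, an odd number, so $\symsum{}(\omega)$ is not of quaternionic type. The case in which $\irreducibleRepresentation{\omega}$ is of quaternionic type is entirely symmetric. Since $\irreducibleRepresentation{\omega}$ is self-dual, it is of real or of quaternionic type, and in either case $\symsum{}(\omega)$ is Witt homogeneous of exactly that type, as claimed. I do not expect a genuine obstacle here once \cref{prop:real-iff-odd,lem:real-sums} are in hand; the one step deserving a moment's care is the observation that $\pairing{2\rho^\vee}{-}$ is even on the root lattice, since this is precisely what transports the type of $\irreducibleRepresentation{\omega}$ to every self-dual constituent of $\symsum{}(\omega)$.
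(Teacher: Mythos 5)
Your proposal is correct and follows essentially the same route as the paper's proof: the evenness of \(\pairing{2\rho^\vee}{-}\) on the root lattice combined with \cref{prop:real-iff-odd} shows all constituents of \(\symsum{}(\omega)\) in \eqref{eq:symsum{}-as-irreds} relevant to \cref{lem:real-sums} have the type of \(\irreducibleRepresentation{\omega}\), whence the conclusion. Your explicit handling of the non-self-dual (complex-type) constituents is a small point the paper glosses over, but it is not a different argument.
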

\begin{proof}
  For any root \(\beta\), \(\pairing{2\rho^\vee}{\beta}\) is even.  Together with \cref{prop:real-iff-odd}, this shows that irreducible representations \(\irreducibleRepresentation{\omega'}\) occurring in \eqref{eq:symsum{}-as-irreds} are of the same type as \(\irreducibleRepresentation{\omega}\).  So \(\symsum{}(\omega)\) is Witt homogeneous.  It follows from \cref{lem:real-sums} and \eqref{eq:symsum{}-as-irreds} that it is of the same type as \(\irreducibleRepresentation{\omega}\).
\end{proof}

\begin{remark}
  For \emph{miniscule} weights \(\omega_\alpha\), the fundamental representations \(\fundamentalRepresentation{\alpha}\) are equal to the symmetric sums \(\symsum{}(\omega_\alpha)\), i.e.\ all additional summands in \eqref{eq:irred-as-symsum{}s} vanish.   In type \(A_n\), all fundamental weights are miniscule, but in general only few fundamental weights are.
\end{remark}

\subsection{The degrees of the generators of the Witt ring}

The identification of the degrees in the main theorem rests on:
\begin{lem}
  \label{lem:degrees}
  Consider an element \(u\in\R(\levisubgroup{H})\) that can be written as
  \[
    u+u^* = \sum_{j=1}^{k} \mu_j \cdot i^*\lambda_j
  \]
  for certain self-dual elements \(\mu_j\in\R(\levisubgroup{H})\) and \(\lambda_j\in \reducedR G\).
  Any such element gives rise rise to an element \([\alpha(u)]\in\h^-(\K(G/\levisubgroup{H}))\).
  If all \(\mu_j\) and \(\lambda_j\) are of real type, then \([\alpha(u)]\) corresponds to an element of \(\Witt^3(\K(G/\levisubgroup{H}))\) under the isomorphism~\eqref{eq:Bousfield}.
  If for each \(j\), precisely one of \(\mu_j,\lambda_j\) is of real type and the other is of quaternionic type, then \([\alpha(u)]\in\Witt^1(\K(G/\levisubgroup{H}))\) under the isomorphism~\eqref{eq:Bousfield}.
\end{lem}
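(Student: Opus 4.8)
The plan is to realize $[\alpha(u)]$ through a connecting homomorphism and then follow the $\ZZ/4$-grading through that map. Write $R:=\R(\levisubgroup{H})$. Since $\alpha(i^*\lambda_j)=\rank(\lambda_j)=0$ for each $\lambda_j\in\reducedR G$, the class $\alpha(u)\in\K^0(G/\levisubgroup{H})$ is anti-self-dual and defines $[\alpha(u)]\in\h^-(\K^0(G/\levisubgroup{H}))$. By Hodgkin's theorem \eqref{eq:Hodgkin}, $\K^0(G/\levisubgroup{H})=R/\mathfrak j$ with $\mathfrak j:=i^*(\reducedR G)\cdot R$, and since $R$ is free over $\R G$ the inclusion $\reducedR G\hookrightarrow\R G$ induces an isomorphism $\mathfrak j\cong\reducedR G\otimes_{\R G}R$ of $\ZZ$-modules with involution. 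The short exact sequence $0\to\mathfrak j\to R\to\K^0(G/\levisubgroup{H})\to 0$ gives a long exact Tate cohomology sequence, and since $\h^-(R)=0$ by \cref{prop:Tate-of-H}, the connecting homomorphism $\partial\colon\h^-(\K^0(G/\levisubgroup{H}))\to\h^+(\mathfrak j)$ is injective. By the usual description of $\partial$ (lift an anti-self-dual class to $R$ and symmetrize), $\partial[\alpha(u)]$ is the class of $u+u^*=\sum_j\mu_j\cdot i^*\lambda_j$, which under $\mathfrak j\cong\reducedR G\otimes_{\R G}R$ is the class of $\sum_j\lambda_j\otimes\mu_j$.

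The next step is to put the real/quaternionic decomposition on $\h^+(\mathfrak j)$. By \eqref{eq:Bousfield-for-reps} the group $\h^+(\R G)$ splits as the images of the complexification maps $c$ and $c'$ (i.e.\ as $\Witt^0(\R G)\oplus\Witt^2(\R G)$); the same holds for $R$, and --- via $\mathfrak j\cong\reducedR G\otimes_{\R G}R$ and the compatibility of complexification with tensor products --- for $\mathfrak j$, so that $\h^+(\mathfrak j)$ splits as a direct sum of a ``real'' and a ``quaternionic'' summand. This splitting is multiplicative (it is the $\ZZ/4$-grading of the ring $\h^+(\R G)$): a product of two self-dual classes is of real type when its two factors have the same type and of quaternionic type when they have opposite types, and, as in \cref{lem:real-sums}, a sum of classes of a fixed type is again of that type. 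Hence, if all $\mu_j$ and $\lambda_j$ are of real type, then every $\lambda_j\otimes\mu_j$, and therefore $\partial[\alpha(u)]$, lies in the real summand of $\h^+(\mathfrak j)$; if for each $j$ exactly one of $\mu_j,\lambda_j$ is of real type and the other of quaternionic type, then $\partial[\alpha(u)]$ lies in the quaternionic summand.

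To conclude, I would use that $\partial$ shifts the $\ZZ/4$-degree by $-1\equiv 3$, i.e.\ that it carries $\Witt^3(G/\levisubgroup{H})$ into the real summand of $\h^+(\mathfrak j)$ and $\Witt^1(G/\levisubgroup{H})$ into the quaternionic summand. Together with the injectivity of $\partial$ and the direct-sum decomposition of $\h^+(\mathfrak j)$, this finishes the argument: in the first case, $\partial[\alpha(u)]$ lying in the real summand forces the $\Witt^1$-component of $[\alpha(u)]$ to vanish, so $[\alpha(u)]\in\Witt^3(G/\levisubgroup{H})$; in the mixed case, $[\alpha(u)]\in\Witt^1(G/\levisubgroup{H})$.

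I expect the degree-shift property of $\partial$ to be the main obstacle. It is invisible to the Tate cohomology of complex $K$-theory, which records only the $\ZZ/2$-grading; pinning it down requires genuine real $K$-theory, namely tracing $\partial$ through the Bott exact sequences relating $\KO$, $\mathrm{KSp}$ and $\K$ and using Bott periodicity. However, this degree shift is a formal feature of the machinery and does not depend on $\levisubgroup{H}$, so it need only be checked once; and it is already implicit in the proof of \cref{thm:old}, where the degree-$1$ and degree-$3$ generators of $\Witt^*(G/T)$ arise from precisely the quaternionic and the real fundamental representations. The remainder of the proof of \cref{lem:degrees} then amounts to combining this known degree bookkeeping with the injectivity of $\partial$ and the elementary type arithmetic above.
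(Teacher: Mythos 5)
Your reduction to the connecting homomorphism $\partial\colon \h^-(\K^0(G/\levisubgroup{H}))\to\h^+(\mathfrak{j})$ is reasonable bookkeeping, but the proposal has a genuine gap exactly where the content of the lemma lies: the assertion that $\partial$ interacts with the $\ZZ/4$-grading as claimed, i.e.\ that it carries $\Witt^3$ into the ``real'' summand and $\Witt^1$ into the ``quaternionic'' summand of $\h^+(\mathfrak{j})$. This is not a formal feature of Tate cohomology: the $\ZZ/4$-grading is invisible to $\K^0$ with its duality involution, and you yourself note that pinning the shift down requires tracing $\partial$ through the $\KO$/$\mathrm{KSp}$/$\K$ sequences --- but you never do this, and appealing to \cref{thm:old} is circular, since degree statements of exactly this kind are what the lemma is designed to deliver. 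Two further points are asserted rather than proved: that $\h^+(\mathfrak{j})$ splits into real and quaternionic summands compatibly with $\partial$ and the multiplication (\eqref{eq:Bousfield-for-reps} is a statement about representation \emph{rings}, while $\mathfrak{j}$ is only an ideal/module with involution), and even the direction of the shift is not internally consistent (a shift by ``$-1\equiv 3$'' would send $\Witt^3$ to the quaternionic summand, not the real one), which is a symptom of the fact that this bookkeeping has not actually been carried out.

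The paper's proof avoids the connecting homomorphism and supplies the missing $\KO$-input directly. For the mixed (degree-$1$) case it uses the commutative diagram relating $\R(\levisubgroup{H})$, $\RSp(\levisubgroup{H})$, $\K(G/\levisubgroup{H})$ and $\KO^4(G/\levisubgroup{H})$: the type hypothesis lets one write $\mu_j\cdot i^*\lambda_j = c'(\hat\mu_j\cdot i^*\hat\lambda_j)$ with $\hat\mu_j$, $\hat\lambda_j$ orthogonal/symplectic; since $c'q=\id+*$ and $c'$ is injective on representation rings, $q(u)=\sum_j\hat\mu_j\cdot i^*\hat\lambda_j$, whence $q(\alphaU u)=\alphaSp(q(u))=0$ because $\alphaO i^*=0$, $\alphaSp i^*=0$ and $\alphaO\oplus\alphaSp$ is multiplicative. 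The vanishing $\alphaU(u)\in\ker(q)$, combined with the explicit description of the isomorphism \eqref{eq:Bousfield} in \cite[\S\,1.2]{zibrowius:koff}, is what identifies the class as lying in $\Witt^1$; the all-real case giving $\Witt^3$ is handled analogously. If you wish to keep your $\partial$-based formulation you would still have to prove a statement of precisely this kind, so this diagram argument (or an equivalent Bott-sequence computation) is the part you must supply before your proof is complete.
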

\begin{proof}
  This is a mild generalization of \cite[Lemma~3.6]{hemmert1}.  For completeness, we sketch a proof of the last statement.
  Consider the following commutative diagram:
  \[
    \begin{tikzcd}
      & \R(\levisubgroup{H}) \arrow[d, "q"] \arrow[r, "\alphaU"]           & \K(G/\levisubgroup{H}) \arrow[d, "q"] \\
      \reducedRSp(G) \arrow[r, "i^*"] \arrow[d, "c'", hook] & \RSp(\levisubgroup{H}) \arrow[r, "\alphaSp"] \arrow[d, "c'", hook] & \KO^4(G/\levisubgroup{H})               \\
      \reducedR(G) \arrow[r, "i^*"]                       & \R(\levisubgroup{H})                                             &
    \end{tikzcd}
  \]
  By assumption, \(\mu_j\cdot i^*(\lambda_j) = c'(\hat\mu_j \cdot i^*\hat\lambda_j)\) for certain elements \(\hat\mu_j\) and \(\hat\lambda_j\) in \(\RSp(\levisubgroup{H})\) and \(\reducedRO(G)\), respectively, or in \(\RO(\levisubgroup{H})\) and \(\reducedRSp(G)\), respectively.
  Note also that \(c'q = \id + *\).  Thus, \(c'q(u) = u+u^* = c'(\sum_j \hat\mu_j \cdot i^*\hat\lambda_j)\).   As \(c'\) is injective on representation rings of compact Lie groups, it follows that \(q(u) = \sum_j \hat\mu_j \cdot i^*\hat\lambda_j\).  So \(q(\alpha u) = \alphaSp(q(u)) = \sum_j\alphaSp(\hat\mu_j \cdot i^*\hat\lambda_j)\), and this is zero by the multiplicative properties of \(\alphaO \oplus \alphaSp\), and by the fact that \(\alphaO i^* = 0\) and \(\alphaSp i^* = 0\).  Thus, \(\alpha u \in\ker(q)\).  It follows that \([\alphaU(u)]\) corresponds to an element of \(\Witt^1(G/\levisubgroup{H})\).   (See the more precise description  of the isomorphism \eqref{eq:Bousfield} in \cite[\S\,1.2]{zibrowius:koff}).
\end{proof}

\begin{proof}[Proof of \cref{thm:main-degrees}]
  We have seen in the proof of \cref{thm:main} that \(\h^*(\R(\levisubgroup{H}))\) is generated by certain elements \([\tau_{[\alpha]}]\) indexed by \(\alpha \in (\simpleroots{}\setminus I)/\dual{}\).
  Let us again write \(\mathfrak g\) for the ideal generated by the elements \(\tau_{[\alpha]}\).
  We have moreover seen that \(\h^*(\K(G/\levisubgroup{H}))\) is generated by certain elements  \([\alphaU(u_{[\gamma]})]\) indexed by \(\gamma \in I/\dual{}\).
  By construction,
  \begin{align*}
    u_{[\gamma]} + \dual{\levisubgroup{H}} u_{[\gamma]} &\equiv \begin{cases}
      i^*\rsymsum{}(\omega_\gamma)  & \text{ if } \dual{}\gamma = \gamma\\
      i^*\rsymsum{}(\omega_\gamma + \omega_{\dual{}\gamma}) & \text{ if } \dual{}\gamma \neq \gamma
    \end{cases}
  \end{align*}
  in \(\R(\levisubgroup{H})/\mathfrak g\).  By \cref{rem:Tate-of-RH-under-singlecell}, we may in fact choose the elements \(u_{[\gamma]}\) such that these equalities hold in \(\h^*(\R(\levisubgroup{H}))\), not just modulo \(\mathfrak g\).
  By \cref{cor:symsum{}s-witt-homogeneous}, the Tate cohomology classes \([\rsymsum{}(\omega_\alpha)]\) and \([\rsymsum{}(\omega_\gamma + \omega_{\dual{}\gamma})]\) are Witt homogeneous.
  We may therefore conclude using \cref{lem:degrees}:  take \(u := u_{[\gamma]}\), \(k=1\), \(\mu_1 = 1\), and \(\lambda_1 = \rsymsum{}(\omega_\gamma)\) or \(\lambda_1 = \rsymsum{}(\omega_\gamma + \omega_{\dual{}\gamma})\).
\end{proof}

\section{Proof of \cref{thm:F4}}
\label{sec:F4}

The Dynkin diagram of $F_4$ is given by
\begin{center}
  % \dynkin[mark=o,label,label macro/.code={\alpha_{\drlap{#1}}}]{F}{4}
  \dynkin[mark=o,label]{F}{4}
\end{center}
We enumerate a choice of simple roots as indicated, and write \(\omega_i\) for the corresponding fundamental weights.  This is the same numbering as in \cite[Plate~VIII]{bourbaki456}.

\begin{table}
  \newcommand{\generatorlist}[1]{\substack{~\\#1\\~}}
  \begin{adjustwidth}{-4cm}{-4cm}
    \centering%\resizebox{1.2\textwidth}{!}{%
  \begin{tabular}{CCCCCCc}
    \toprule
    H & \dual{H}\omega_1 & \dual{H}\omega_2 & \dual{H}\omega_3 & \dual{H}\omega_4 & \substack{\text{generators}\\\text{of } \fixmonoid} & \text{free}\\
    \midrule
    \dynkin F{***o} & \omega_1-2\omega_4 & \omega_2-4\omega_4 & \omega_3-3\omega_4 & -\omega_4 & \generatorlist{\omega_1-\omega_4\\\omega_2-2\omega_4\\2\omega_3-3\omega_4} & yes\\
    % \left\{2,3,4\right\} & -\omega_1 & \omega_2-3\omega_1 & \omega_3-2\omega_1 & \omega_4-\omega_1 & \generatorlist{2\omega_2-3\omega_1\\2\omega_4-\omega_1\\\omega_2+\omega_4-2\omega_1\\\omega_3-\omega_1}&no\\
    \dynkin F{o***} & -\omega_1 & \omega_2-3\omega_1 & \omega_3-2\omega_1 & \omega_4-\omega_1 & \generatorlist{2\omega_2-3\omega_1\\2\omega_4-\omega_1\\\omega_2+\omega_4-2\omega_1\\\omega_3-\omega_1}&no\\
    %\left\{1,3\right\} & \omega_1-\omega_2 & -\omega_2 & \omega_3-\omega_2-\omega_4 & -\omega_4 & \generatorlist{2\omega_1-\omega_2\\2\omega_3-\omega_2-\omega_4}& yes\\
    \dynkin F{*o*o} & \omega_1-\omega_2 & -\omega_2 & \omega_3-\omega_2-\omega_4 & -\omega_4 & \generatorlist{2\omega_1-\omega_2\\2\omega_3-\omega_2-\omega_4}& yes\\
    % \left\{1,2,4\right\} & \omega_2-2\omega_3 & \omega_1-2\omega_3 & -\omega_3 & \omega_4-\omega_3 & \generatorlist{\omega_1+\omega_2-2\omega_3\\2\omega_4-\omega_3} &yes\\
    \dynkin F{**o*} & \omega_2-2\omega_3 & \omega_1-2\omega_3 & -\omega_3 & \omega_4-\omega_3 & \generatorlist{\omega_1+\omega_2-2\omega_3\\2\omega_4-\omega_3} &yes\\
    % \left\{1,3,4\right\} & \omega_1-\omega_2 & -\omega_2 & \omega_4-\omega_2 & \omega_3-\omega_2 & \generatorlist{2\omega_1-\omega_2\\\omega_3+\omega_4-\omega_2} &yes\\
    \dynkin F{*o**} & \omega_1-\omega_2 & -\omega_2 & \omega_4-\omega_2 & \omega_3-\omega_2 & \generatorlist{2\omega_1-\omega_2\\\omega_3+\omega_4-\omega_2} &yes\\
    \bottomrule
  \end{tabular}
\end{adjustwidth}
\caption{In the first column, we indicate in black the subset \(\simpleroots{H}\subset \simpleroots{}\). In columns 2--5, the duals of the fundamental weights under the duality induced by the longest element in $\Weyl{H}$ are given. In column~6, we display a set of generators of the abelian monoid $\fixmonoid$, and in column~7 we indicate whether this abelian monoid is free.}
\label{tablef4_1}
\end{table}
\begin{table}
  \begin{adjustwidth}{-4cm}{-4cm}
    \centering%\resizebox{1.2\textwidth}{!}{%
    \renewcommand{\arraystretch}{1.5}
    \begin{tabular}{CCCCC}
      \toprule
      \simpleroots{H} & \closedWeylchamberZZ{H}^\dual{H}\cap \Weyl{}.\omega_1 & \closedWeylchamberZZ{H}^\dual{H}\cap \Weyl{}.\omega_2 & \closedWeylchamberZZ{H}^\dual{H}\cap \Weyl{}.\omega_3 & \closedWeylchamberZZ{H}^\dual{H}\cap \Weyl{}.\omega_4 \\
      \midrule
      \dynkin F{***o} & \left\{\omega_2-2\omega_4\right\} & \left\{\omega_1+2\omega_3-4\omega_4\right\} & \left\{2\omega_3-3\omega_4\right\} & \left\{\omega_1-\omega_4\right\} \\
      % \left\{2,3,4\right\} & \left\{2\omega_4-\omega_1\right\} & \left\{2\omega_2-3\omega_1\right\} & \left\{\omega_2+\omega_4-2\omega_1\right\} & \left\{\omega_3-\omega_1\right\} & \generatorlist{2\omega_2-3\omega_1\\2\omega_4-\omega_1\\\omega_2+\omega_4-2\omega_1\\\omega_3-\omega_1}\\
      \dynkin F{o***} & \left\{2\omega_4-\omega_1\right\} & \left\{2\omega_2-3\omega_1\right\} & \left\{\omega_2+\omega_4-2\omega_1\right\} & \left\{\omega_3-\omega_1\right\} \\
      % \left\{1,3\right\} & \left\{2\omega_1-\omega_2\right\} & \left\{2\omega_1+4\omega_3-3\omega_2-2\omega_4\right\} & \left\{2\omega_1+2\omega_3-2\omega_2-\omega_4\right\} & \left\{2\omega_3-\omega_2-\omega_4\right\} & \generatorlist{2\omega_1-\omega_2\\2\omega_3-\omega_2-\omega_4}\\
      \dynkin F{*o*o} & \left\{2\omega_1-\omega_2\right\} & \left\{2\omega_1+4\omega_3-3\omega_2-2\omega_4\right\} & \left\{2\omega_1+2\omega_3-2\omega_2-\omega_4\right\} & \left\{2\omega_3-\omega_2-\omega_4\right\} \\
      % \left\{1,2,4\right\} & \left\{\omega_1+\omega_2-2\omega_3\right\} & \left\{\omega_1+\omega_2+4\omega_4-4\omega_3\right\} & \left\{\omega_1+\omega_2+2\omega_4-3\omega_3\right\} & \left\{2\omega_4-\omega_3\right\} & \generatorlist{\omega_1+\omega_2-2\omega_3\\2\omega_4-\omega_3}\\
      \dynkin F{**o*} & \left\{\omega_1+\omega_2-2\omega_3\right\} & \left\{\omega_1+\omega_2+4\omega_4-4\omega_3\right\} & \left\{\omega_1+\omega_2+2\omega_4-3\omega_3\right\} & \left\{2\omega_4-\omega_3\right\} \\
      % \left\{1,3,4\right\} & \left\{2\omega_1-\omega_2\right\} & \left\{2\omega_1+2\omega_3+2\omega_4-3\omega_2\right\} & \left\{2\omega_1+\omega_3+\omega_4-2\omega_2\right\} & \left\{\omega_3+\omega_4-\omega_2\right\} & \generatorlist{2\omega_1-\omega_2\\\omega_3+\omega_4-\omega_2} \\
      \dynkin F{*o**} & \left\{2\omega_1-\omega_2\right\} & \left\{2\omega_1+2\omega_3+2\omega_4-3\omega_2\right\} & \left\{2\omega_1+\omega_3+\omega_4-2\omega_2\right\} & \left\{\omega_3+\omega_4-\omega_2\right\} \\
      \bottomrule
    \end{tabular}
    % }
  \end{adjustwidth}
  \caption{The intersections of the Weyl orbits of the fundamental weights of $F_4$ with the lattice $\fixmonoid$ of self-dual weights in $\closedWeylchamberZZ{H}$.}
\label{tablef4_2}
\end{table}

The first column of Table~\ref{tablef4_1} lists a complete set of representatives of the distinct \(\Weyl{}\)-equivalence classes of those subsets \(\simpleroots{H}\subset\simpleroots{}\) that are not covered by \cref{thm:main}.  In each case, the following columns display the \(\dual{H}\)-duals of the fundamental weights are displayed.  Given the explicit action of \(\dual{H}\) on the fundamental weight, we can easily compute a basis, or at least generators, of \(\fixmonoid\).  These generators are displayed in the final column of the table.  We see that in all cases except $\dynkin F{o***}$, the monoid $\fixmonoid$ is free commutative with the given generators as a basis.  Table~\ref{tablef4_2} displays the intersections of the Weyl orbits of the fundamental weights with \(\fixmonoid\).  In each case, we find that this intersection is a singleton containing one of the generators given in Table~\ref{tablef4_1}.  Thus, \ref{cond:orbits} is satisfied in all cases but one.  It remains to deal with this exceptional case:
\begin{prop}\label{lemmaf4exceptionalcase}
For the subset \({H}\) marked black in \(\dynkin F{o***}\), the Tate cohomology \(\h^*\left(\R(\levisubgroup{H}) / (\rsymsum{}(\omega_i) \mid i \in \{1,2,3,4\})\right)\) is an exterior algebra on a single generator of odd degree.
\end{prop}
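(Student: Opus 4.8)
The plan is to imitate the reduction carried out in \cref{sec:orbits-ext}, keeping track of the one feature that makes the present case exceptional: for the subset \(H\) marked black in \(\dynkin F{o***}\), condition~\ref{cond:orbits} fails only because the monoid \(\fixmonoid\) is not free. Write \(R := \R(\levisubgroup{H})\) and \(\mathfrak g := (\rsymsum{}(\omega_1),\rsymsum{}(\omega_2),\rsymsum{}(\omega_3),\rsymsum{}(\omega_4))\), so that we must compute \(\h^*(R/\mathfrak g)\). By \cref{prop:Tate-of-H}, \(\h^-(R) = 0\), and \(\h^+(R)\) is a commutative ring filtered by the dominance order \(\lessdominant{H}\) whose associated graded is the monoid algebra \(\ZZII[\fixmonoid]\). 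From the generators listed in Table~\ref{tablef4_1}, namely \(a = 2\omega_2 - 3\omega_1\), \(b = 2\omega_4 - \omega_1\), \(c = \omega_2 + \omega_4 - 2\omega_1\) and \(d = \omega_3 - \omega_1\), one reads off the single relation \(a + b = 2c\), so that \(\ZZII[\fixmonoid] \cong \ZZII[A,B,C,D]/(AB - C^2)\) with \(A = [\symsum{H}(a)]\), \(B = [\symsum{H}(b)]\), \(C = [\symsum{H}(c)]\), \(D = [\symsum{H}(d)]\). As \(\fixmonoid\) is cancellative and torsion-free, \(\ZZII[\fixmonoid]\) is an integral domain, and hence so is \(\h^+(R)\): any element with nonzero leading term is a non-zero-divisor.

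Next I would track the four generators of \(\mathfrak g\). The involution \(\dual{}\) of \(F_4\) is trivial, so the \(\rsymsum{}(\omega_j)\) are self-dual; they form a regular sequence in \(R\), just as in the proof of \cref{thm:main}. By \cref{lem:symsum{}-decomposition} together with Table~\ref{tablef4_2}, the restriction \(i^*[\rsymsum{}(\omega_j)]\) equals \(B\), \(A\), \(C\), \(D\) for \(j = 1,2,3,4\) respectively; here one uses that the rank corrections distinguishing \(\rsymsum{}\) from \(\symsum{}\) are even multiples of \([1]\) --- the orbits \(\Weyl{}.\omega_j\) all have even cardinality, for instance because \(-\mathrm{id} \in \Weyl{}\) acts on them without fixed points --- and hence vanish in the \(\ZZII\)-vector space \(\h^+(R)\). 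Now divide out \(\rsymsum{}(\omega_1)\), \(\rsymsum{}(\omega_2)\) and \(\rsymsum{}(\omega_4)\) successively. At each step the class to be removed is homogeneous for the dominance filtration, and its image in the current associated graded --- which is successively \(\ZZII[A,B,C,D]/(AB - C^2)\), then \(\ZZII[A,C,D]/(C^2)\), then \(\ZZII[C,D]/(C^2)\) --- is a non-zero-divisor, so its lift is too. Hence, by iterated application of \cite[Lemma~1.4(iii)]{hemmert1} (the first half of the proof of \cref{prop:key-Tate-argument}), \(\h^-\) remains zero and \(\h^+\) passes to the ideal quotient at each step. Writing \(R_3 := R/(\rsymsum{}(\omega_1),\rsymsum{}(\omega_2),\rsymsum{}(\omega_4))\), we obtain \(\h^-(R_3) = 0\) and a two-dimensional local \(\ZZII\)-algebra \(\h^+(R_3)\) with \(\ZZII\)-basis \(\{1, C\}\), where \(C\) is the image of \(i^*[\rsymsum{}(\omega_3)]\). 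The rank map \(R \to \ZZ\) annihilates \(\mathfrak g\), hence descends to an augmentation \(\h^+(R_3) \to \h^+(\ZZ) = \ZZII\) sending \(C \mapsto 0\); so \(C\) is not a unit, and as \(C^2 \in \ZZII\cdot 1\) this forces \(C^2 = 0\), i.e.\ \(\h^+(R_3) \cong \ZZII[C]/(C^2)\) as a ring.

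Finally divide out \(\lambda := \rsymsum{}(\omega_3)\), which is still a non-zero-divisor in \(R_3\). As \(\lambda\) is self-dual, \(0 \to R_3 \xrightarrow{\cdot\lambda} R_3 \to R_3/\lambda \to 0\) is a short exact sequence of \(\ZZ/2\)-modules, and multiplication by \(\lambda\) induces multiplication by \([\lambda] = C\) on Tate cohomology. Using \(\h^-(R_3) = 0\), the associated long exact sequence yields
\[
  \h^+(R_3) \xrightarrow{\;\cdot C\;} \h^+(R_3) \longrightarrow \h^+(R_3/\lambda) \longrightarrow 0,
  \qquad
  0 \longrightarrow \h^-(R_3/\lambda) \longrightarrow \h^+(R_3) \xrightarrow{\;\cdot C\;} \h^+(R_3).
\]
On \(\ZZII[C]/(C^2)\) the map \(\cdot C\) has kernel and cokernel both isomorphic to \(\ZZII\), so \(\h^+(R_3/\lambda) \cong \ZZII\) and \(\h^-(R_3/\lambda) \cong \ZZII\). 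Since \(R_3/\lambda = R/\mathfrak g\), the graded group \(\h^*(R/\mathfrak g)\) is one-dimensional in degree \(+\) (spanned by the unit) and one-dimensional in degree \(-\). Identifying \(\h^*(R/\mathfrak g)\) with \(\Witt^*(F_4/\levisubgroup{H})\) via \eqref{eq:main-iso-2}, and using that the flag variety \(F_4/\levisubgroup{H}\) has \(\K^1(F_4/\levisubgroup{H}) = 0\), \cref{prop:odd-degre-squares-are-zero} shows that the odd-degree generator squares to zero; therefore \(\h^*(R/\mathfrak g)\) is an exterior algebra on a single generator of odd degree.

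I expect the real obstacle to be the middle step: identifying the ring \(\h^+(R)\) --- equivalently the monoid \(\fixmonoid\) and its single relation --- from the root-theoretic data, and then confirming that successively dividing out \(\rsymsum{}(\omega_1),\rsymsum{}(\omega_2),\rsymsum{}(\omega_4)\) leaves exactly \(\ZZII[C]/(C^2)\) and nothing larger. Once this associated-graded bookkeeping is done, the vanishing \(C^2 = 0\) and the concluding long exact sequence are essentially formal. (An alternative, more computational route is to realize \(R\) explicitly as a subring of \(\R(\Sp(6))[t, t^{-1}]\), write out the four relations, and compute the Tate cohomology of the quotient directly.)
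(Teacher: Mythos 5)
Your proposal follows essentially the same route as the paper: the same presentation of \(\fixmonoid\) by the single relation \(\tau_1+\tau_2=2\tau_3\), the same identification of the restrictions \(i^*[\rsymsum{}(\omega_j)]\) with the classes \([\symsum{H}(\tau_j)]\) via Table~\ref{tablef4_2} (your even-orbit remark disposing of the rank corrections is correct), and the same plan of dividing out the three classes attached to \(\omega_1,\omega_2,\omega_4\) first and treating \(\omega_3\) separately. The leading-term bookkeeping you use for the regular-sequence steps is in substance the induction of \cref{lemmatatecohfreemonoid} and is fine.

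The genuine gap is the assertion ``as \(C^2\in\ZZII\cdot 1\)''. This does not follow from your associated-graded argument. By \cref{prop:Tate-of-H}, \([\symsum{H}(\tau_3)]^2=[\symsum{H}(\tau_1+\tau_2)]+\text{lower terms}\), where the lower terms run over \emph{all} \(\tau\in\fixmonoid\) with \(\tau\lessdominant{H}\tau_1+\tau_2\); and \(\tau_3\) itself is such a weight, since \(\tau_3=\omega_2+\omega_4-2\omega_1=2\alpha_2+3\alpha_3+2\alpha_4\) lies in the non-negative span of the simple roots of \(H\) (Bourbaki, Plate VIII). So in the two-dimensional quotient your class a priori satisfies \(\bar C^2=\alpha+\beta\bar C\); the filtration controls only the top term, and the augmentation kills \(\alpha\) but says nothing about \(\beta\). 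This is precisely the point where the paper's proof does something you don't: since \(\omega_1\) is \(\Weyl{H}\)-invariant, \(e^{\omega_1}\) is a unit in \(\R(\levisubgroup{H})\), and the paper constructs an actual ring isomorphism \(\phi\colon\ZZII[\fixmonoid]\to\h^+(\R(\levisubgroup{H}))\) whose generators are explicit self-dual monomials in the \([\symsum{H}(\omega_i)]\) and \(e^{-\omega_1}\); the quadric relation then holds on the nose, the classes \([\rsymsum{}(\omega_1)],[\rsymsum{}(\omega_2)],[\rsymsum{}(\omega_4)]\) are exactly the (reduced) generators, and \([\rsymsum{}(\omega_3)]\) differs from the fourth generator only by a polynomial in the other three, so its image in the quotient is exactly the square-zero element; the exterior-algebra structure is then supplied by \cite[Lemma~1.6]{hemmert1}. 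That said, your own ending is more robust than you noticed: after the augmentation argument only \(\bar C^2=0\) or \(\bar C^2=\bar C\) survive, and in either case multiplication by \(\bar C\) on the two-dimensional algebra has one-dimensional kernel and cokernel, so your long exact sequence still gives \(\h^+\cong\ZZII\cdot 1\) and \(\h^-\cong\ZZII\), and \cref{prop:odd-degre-squares-are-zero} finishes the proof. So the conclusion is salvageable, but as written the step \(C^2\in\ZZII\cdot1\) is unproved; either establish it via the paper's exact presentation or reorganize the ending so that it is not needed.
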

\begin{proof}
  Let \(M\) denote the monoid \(\fixmonoid\).  We may deduce from the generators of \(M\) displayed in \cref{tablef4_1} that
  \[
    M = \frac{\NN\{\tau_1,\dots,\tau_4\}}{2\tau_3 = \tau_1 + \tau_2}
  \]
  with
  \begin{align*}
    \tau_1 &:= 2\omega_4-\omega_1 & \quad %&= \;\;\alpha_2 + 2 \alpha_3 + 2 \alpha_4\\
    \tau_2 &:= 2\omega_2-3\omega_1 & \quad %&=  3 \alpha_2 + 4 \alpha_3 + 2 \alpha_4 \\
    \tau_3 &:= \omega_2 + \omega_4 -2\omega_1 & \quad %&= 2 \alpha_2 + 3 \alpha_3 + 2 \alpha_4\\
    \tau_4 &:= \omega_3 - \omega_1 & \quad %&= \;\;\alpha_2 + 2 \alpha_3 + \alpha_4
  \end{align*}
  The monoid ring \(\ZZII[M]\) can therefore be described explicitly as the ring
  \[
    \ZZII[M] = \frac{\ZZII[e^{\tau_1},\dots,e^{\tau_4}]}{(e^{\tau_3})^2-e^{\tau_1}e^{\tau_2}}
  \]
  On the other hand, we know from \cref{prop:Tate-of-H} that \(\h^*(\R H)\) has a \(\ZZII\)-basis given by the elements \(e_{\tau} := [\symsum{H}(\tau)]\) indexed by all \(\tau\in M\).  Let us write \(e_i\) for \(e_{\omega_i}\).  Consider the ring homomorphism defined on generators as follows:
  \begin{align*}
    \phi\colon \ZZII[M] &\to \h^*(\R(\levisubgroup{H}))
  \end{align*}
  \begin{align*}
    e^{\tau_1} & \mapsto e_4^2e_1^{-1} & %\\
    e^{\tau_2} & \mapsto e_2e_1^{-3} & %\\
    e^{\tau_3} & \mapsto e_2e_4e_1^{-2} & %\\
    e^{\tau_4} & \mapsto e_3e_1^{-1}
  \end{align*}
  Note \(\phi\) is well-defined.  Although the definition of \(\phi\) is different from the definition of the homomorphism \(\phi\) in \cref{lemmatatecohfreemonoid} (where we mapped \(e^{\tau_i}\) to \(e_{\tau_i}\)), we can argue as in the proof of \cref{lemmatatecohfreemonoid} that \(\phi\) is an isomorphism.  Indeed, by \cref{prop:Tate-of-H}, \(\phi(e^{\tau_i}) = e_{\tau_i} + \text{ smaller terms}\), and we find that \(\phi\) maps the \(\ZZII\)-basis \(\{e^{\tau} \mid \tau\in M\}\) of \(\ZZII[M]\) to a \(\ZZII\)-basis of \(\h^*(\R(\levisubgroup{H}))\).

  From Table~\ref{tablef4_2}, we see that \([\symsum{}(\omega_i)] = [\symsum{H}(\tau_i)] = e_{\tau_i}\) in $\h^\ast(R(\levisubgroup{H}))$.  Let us identify these elements in terms of the multiplicative generators \(\phi(e^{\tau_i})\).  Note that:
  \begin{compactitem}
  \item \(\symsum{H}(\omega+\nu)=\symsum{H}(\omega)\cdot \symsum{H}(\nu)\) for any \(\nu\in\cellclosureRR{H}\)
  \item \(\symsum{H}(m\cdot \nu) = \symsum{H}(\nu)^m\) for any \(\nu\in\cellclosureRR{H}\) and any \(m\in\ZZ\)
  \item \([\symsum{H}(2\omega)] = [\symsum{H}(\omega)^2]\) in Tate cohomology in view of the fact that Tate cohomology is 2-torsion.
  \end{compactitem}
  Indeed, the first two observations follow as $\Weyl{H}.\nu=\left\{\nu\right\}$, and the third follows as Tate cohomology is \(2\)-torsion.
  Using these observations, we can identify \(e_{\tau_1}\), \(e_{\tau_2}\) and \(e_{\tau_4}\) as follows:
  \begin{align*}
    e_{\tau_1}  &= \phi(e^{\tau_1}) \\
    e_{\tau_2}  &= \phi(e^{\tau_2})  \\
    e_{\tau_3} &= \phi(e^{\tau_3}) + P(e_{\tau_1},e_{\tau_2},e_{\tau_4})\\
    e_{\tau_4} &= \phi(e^{\tau_4})
  \end{align*}
  In the identification of \(e_{\tau_3}\), \(P\) denotes a polynomial in three variables.  We obtain this identification of \(e_{\tau_3}\) from \cref{prop:Tate-of-H}.  (In fact, the only weights  \(\tau\in\fixmonoid\) such that \(\tau \lessdominant{H} \tau_3\) are \(0\), \(\tau_1\) and \(\tau_4\).  So \(P(e_{\tau_1},e_{\tau_2},e_{\tau_4})\) is a linear combination of \(1\), \(e_{\tau_1}\) and \(e_{\tau_4}\).)

  Noting that $e^{\tau_1},e^{\tau_2},e^{\tau_4}$ is a regular sequence in $\ZZII[M]$, we find from \cref{prop:key-Tate-argument} that
  \[
    \h^*\left(\R(\levisubgroup{H}) / (\rsymsum{}(\omega_i) \mid i \in \{1,2,4\})\right) \cong \frac{\ZZII[e^{\tau_3}]}{\left(e^{\tau_3}+\text{rk}(e^{\tau_3})\right)^2}
  \]
  In this quotient ring, \([\rsymsum{}(\omega_3)] = e^{\tau_3} + \text{rk}(e^{\tau_3})\) is a zero divisor.
  Using \cite[Lemma~1.6]{hemmert1}, we conclude that \(\h^*\left(\R(\levisubgroup{H}) / (\rsymsum{}(\omega_i) \mid i \in \{1,2,3,4\})\right)\) is an exterior algebra on one generator, as claimed.  It remains to apply \eqref{eq:main-iso-1}.
\end{proof}

\begin{proof}[Proof of \cref{thm:F4}]
  As noted in \eqref{eq:main-iso-1}, we have an isomorphism between the Tate cohomology computed above and the Witt ring of \(G/\levisubgroup{H}\). So all that remains to show is that the generators of the exterior algebra are in $\Witt^{-1}(F_4/\levisubgroup{H})$.  In each case except the case in \cref{lemmaf4exceptionalcase}, the generators are of the form \([\alphaU(u)]\) for certain elements \(u \in \R(\levisubgroup{H})\) with
  \[
    u + u^* = i^*\rsymsum{}(\omega_i) + \sum_{j\neq i} \mu_j \cdot i^* \rsymsum{}(\omega_j)
  \]
  for certain coefficients \(\mu_j \in \R(\levisubgroup{H})\).  (See last sentence in \cref{prop:key-Tate-argument}.)  As all complex representations of $F_4$ are of real type, each factor \(\rsymsum{}(\omega_j)\) appearing here is of real type.  Replacing our choice of \(u\) if necessary, we may moreover assume that all \(\mu_j\) are of real type.  We can then apply \cref{lem:degrees}.  In the case considered in \cref{lemmaf4exceptionalcase}, we instead find
  \[
    u + u^* = i^*\rsymsum{}(\omega_3)^2 + \sum_{j\neq 3} \mu_j \cdot i^* \rsymsum{}(\omega_j)
  \]
  (see \cite[Lemma~1.6]{hemmert1}).  We can conclude as before.
\end{proof}
\begin{rem}
  For a geometric description of the flag variety corresponding to the diagram in \cref{lemmaf4exceptionalcase}, see \cref{eg:F4-exception}.
  More generally, such geometric descriptions are known in all cases when \(\levisubgroup{H}\) is maximal, i.e.\ when \(\simpleroots{}\setminus H\) consists of a single root \cite{carrgaribaldi}.
\end{rem}

\bibliographystyle{alphaurl}
\bibliography{main}

\end{document}